\title{{\Large A note on derivability conditions}}
\author{Taishi Kurahashi}
\date{}
\theoremstyle{plain}
\newtheorem{thm}{Theorem}[section]
\newtheorem{lem}[thm]{Lemma}
\newtheorem{prop}[thm]{Proposition}
\newtheorem{cor}[thm]{Corollary}
\newtheorem{fact}[thm]{Fact}
\newtheorem{prob}[thm]{Problem}
\theoremstyle{definition}
\newtheorem{defn}[thm]{Definition}
\newtheorem{rem}[thm]{Remark}
\newcommand{\PA}{\mathsf{PA}}
\newcommand{\PR}{\mathrm{Pr}}
\newcommand{\PRL}{\mathrm{Pr}_\emptyset}
\newcommand{\Prf}{\mathrm{Prf}}
\newcommand{\Con}{\mathsf{Con}_\Phi}
\newcommand{\Fml}{\mathsf{Fml}}
\newcommand{\Sent}{\mathsf{Sent}}
\newcommand{\Even}{\mathsf{Even}}
\newcommand{\gdl}[1]{\ulcorner#1\urcorner}
\newcommand{\gn}{\mathsf{gn}}
\newcommand{\num}{\mathsf{num}}
\newcommand{\CS}{\mathsf{CS}}
\newcommand{\HB}[1]{\mathbf{HB#1}}
\newcommand{\Th}{\mathrm{Th}}
\newcommand{\D}[1]{\mathbf{D#1}}
\newcommand{\BD}[1]{\mathbf{B_{#1}}}
\newcommand{\DC}{\mathbf{\Delta_0 C}}
\newcommand{\SC}{\mathbf{\Sigma_1 C}}
\newcommand{\GC}{\mathbf{\Gamma C}}
\newcommand{\PC}{\mathbf{PC}}
\newcommand{\DU}[1]{\mathbf{D#1^U}}
\newcommand{\BDU}[1]{\mathbf{B_{#1}^U}}
\newcommand{\DCU}{\mathbf{\Delta_0 C^U}}
\newcommand{\SCU}{\mathbf{\Sigma_1 C^U}}
\newcommand{\GCU}{\mathbf{\Gamma C^U}}
\newcommand{\CB}{\mathbf{CB}}
\newcommand{\PCU}{\mathbf{PC^U}}
\newcommand{\DG}[1]{\mathbf{D#1^G}}
\newcommand{\DCG}{\mathbf{\Delta_0 C^G}}
\newcommand{\SCG}{\mathbf{\Sigma_1 C^G}}
\newcommand{\GCG}{\mathbf{\Gamma C^G}}
\newcommand{\PCG}{\mathbf{PC^G}}
\newcommand{\Ax}{\mathbf{Ax}}
\begin{document}

\maketitle

\begin{abstract}
We investigate relationships between versions of derivability conditions for provability predicates. 
We show several implications and non-implications between the conditions, and we discuss unprovability of consistency statements induced by derivability conditions. 
First, we classify already known versions of the second incompleteness theorem, and exhibit some new sets of conditions which are sufficient for unprovability of Hilbert--Bernays' consistency  statement. 
Secondly, we improve Buchholz's schematic proof of provable $\Sigma_1$-completeness. 
Then among other things, we show that Hilbert--Bernays' conditions and L\"ob's conditions are mutually incomparable. 
We also show that neither Hilbert--Bernays' conditions nor L\"ob's conditions accomplish G\"odel's original statement of the second incompleteness theorem.
\end{abstract}

\section{Introduction}

In his famous paper \cite{Goed31}, G\"odel proved the second incompleteness theorem with only a sketched proof. 
G\"odel explained that by formalizing his proof of the first incompleteness theorem, the consistency statement $\exists x(\Fml(x) \land \neg \PR_T(x))$ saying ``there exists a $T$-unprovable formula'' cannot be proved in $T$ if $T$ is consistent. 
To carry out his idea, it is desirable that the formula $\PR_T(x)$ enjoys some natural properties as a formalization of the notion of $T$-provability. 
He wrote that a detailed proof would be presented in a forthcoming work, but such a paper was not published after all.

The first detailed proof of the second incompleteness theorem was presented in the second volume of \textit{Grundlagen der Mathematik} \cite{HB39} by Hilbert and Bernays. 
Especially they formulated a set of conditions for provability predicates which is sufficient for the second incompleteness theorem. 
Let $\PR_T(x)$ be some $\Sigma_1$ provability predicate of $T$. 
They proved that if $\PR_T(x)$ satisfies the following conditions $\HB{1}$, $\HB{2}$ and $\HB{3}$\footnote{More precisely, Hilbert--Bernays' conditions were originally stated on proof predicate $\mathfrak{B}(x, y)$ rather than on provability predicate $\PR_T(x)$. 
For instance, the original statement of $\HB{1}$ is: If a formula with the number $j$ is derived from a formula with the number $i$, then $\exists x \mathfrak{B}(x, i) \to \exists x \mathfrak{B}(x, j)$ is provable. }, then the consistency statement $\forall x (\Fml(x) \land \PR_T(x) \to \neg \PR_T(\dot{\neg} x))$ cannot be proved in $T$ if $T$ is consistent. 
\begin{description}
	\item [$\HB{1}$] If $T \vdash \varphi \to \psi$, then $T \vdash \PR_T(\gdl{\varphi}) \to \PR_T(\gdl{\psi})$. 
	\item [$\HB{2}$] $T \vdash \PR_T(\gdl{\neg \varphi(x)}) \to \PR_T(\gdl{\neg \varphi(\dot{x})})$. 
	\item [$\HB{3}$] $T \vdash f(x) = 0 \to \PR_T(\gdl{f(\dot{x}) = 0})$ for every primitive recursive term $f(x)$.
\end{description}
Here $\gdl{\varphi(\dot{x})}$ is a primitive recursive term corresponding to a function calculating the G\"odel number of the formula $\varphi(\overline{n})$ from $n$, where $\overline{n}$ is the numeral for $n$. 
These conditions are called the \textit{Hilbert--Bernays derivability conditions}. 

L\"ob \cite{Lob55} proved that if $\PR_T(x)$ satisfies the following conditions $\D{1}$, $\D{2}$ and $\D{3}$, then L\"ob's theorem holds, that is, for any formula $\varphi$, if $T \vdash \PR_T(\gdl{\varphi}) \to \varphi$, then $T \vdash \varphi$. 

\begin{description}
	\item [$\D{1}$] If $T \vdash \varphi$, then $T \vdash \PR_T(\gdl{\varphi})$. 
	\item [$\D{2}$] $T \vdash \PR_T(\gdl{\varphi \to \psi}) \to (\PR_T(\gdl{\varphi}) \to \PR_T(\gdl{\psi}))$. 
	\item [$\D{3}$] $T \vdash \PR_T(\gdl{\varphi}) \to \PR_T(\gdl{\PR_T(\gdl{\varphi})})$. 
\end{description}

Note that every provability predicate automatically satisfies $\D{1}$. 
The conditions $\D{1}$ and $\D{2}$ were established by Hilbert and Bernays, and the condition $\D{3}$ was introduced by L\"ob. 
The conditions $\D{1}$, $\D{2}$ and $\D{3}$ are nowadays called the \textit{Hilbert--Bernays--L\"ob derivability conditions} which are well-known as sufficient conditions for a proof of the second incompleteness theorem. 
In fact, if $T$ is consistent, then the unprovability of the consistency statement $\neg \PR_T(\gdl{0 \neq 0})$ in $T$ is an immediate corollary of L\"ob's theorem. 
The Hilbert--Bernays--L\"ob derivability conditions together with L\"ob's theorem are basis for modal logical investigations of provability predicates (see \cite{AB05,Boo93,JD98,Smo85}). 

Other sufficient conditions for the second incompleteness theorem were formulated by authors such as Jeroslow, Montagna and Buchholz. 
Jeroslow \cite{Jer73} proved that the following condition which is a variant of $\D{3}$ implies the unprovability of $\forall x (\Fml(x) \land \PR_T(x) \to \neg \PR_T(\dot{\neg} x))$. 
\begin{itemize}
	\item $T \vdash \PR_T(t) \to \PR_T(\gdl{\PR_T(t)})$ for every primitive recursive term $t$.
\end{itemize}
Notice that $\D{3}$ and Jeroslow's condition are instances of the following provable $\Sigma_1$-completeness because $\PR_T(x)$ is $\Sigma_1$. 
\begin{description}
	\item [$\SC$] If $\varphi$ is a $\Sigma_1$ sentence, then $T \vdash \varphi \to \PR_T(\gdl{\varphi})$. 
\end{description}

Montagna \cite{Mon79} proved that the following two conditions are sufficient for L\"ob's theorem. 
\begin{itemize}
	\item $T \vdash \forall x($``$x$ is a logical axiom'' $\to \PR_T(x))$. 
	\item $T \vdash \forall x \forall y(\Fml(x) \land \Fml(y) \to (\PR_T(x \dot{\to} y) \to (\PR_T(x) \to \PR_T(y))))$. 
\end{itemize}
By Montagna's argument, we can conclude that these two conditions imply the unprovability of $\exists x(\Fml(x) \land \neg \PR_T(x))$. 

At last, in Buchholz's lecture note \cite{Buc93}, the following condition was introduced and it was proved that this condition implies $\D{2}$ and $\SC$. 
\begin{itemize}
	\item For all $m \geq 1$, \\
	if $T \vdash \forall \vec{x}(\varphi_1(\vec{x}) \to (\varphi_2(\vec{x}) \to (\cdots \to (\varphi_{m-1}(\vec{x}) \to \varphi_m(\vec{x})) \cdots)))$, \\
	then $T \vdash \forall \vec{x} (\PR_T(\gdl{\varphi_1(\vec{\dot{x}})}) \to (\PR_T(\gdl{\varphi_2(\vec{\dot{x}})}) \to \\
	\ \ \ \ \ \ \ \ \ \ \ \ \  (\cdots \to (\PR_T(\gdl{\varphi_{m-1}(\vec{\dot{x}})}) \to \PR_T(\gdl{\varphi_m(\vec{\dot{x}})})) \cdots)))$. 
\end{itemize}
Thus Buchholz's condition implies the unprovability of $\neg \PR_T(\gdl{0 \neq 0})$. 

Roughly speaking, every set of derivability conditions introduced above is sufficient for unprovability of consistency statements, but such a rough understanding does not allow us to grasp the situation of the second incompleteness theorem accurately. 
Strictly speaking, these sets of sufficient conditions do not induce the same consequence because there are three different consistency statements $\mathsf{Con}^H \equiv \forall x (\Fml(x) \land \PR_T(x) \to \neg \PR_T(\dot{\neg} x))$, $\mathsf{Con}^L \equiv \neg \PR_T(\gdl{0 \neq 0})$ and $\mathsf{Con}^G \equiv \exists x(\Fml(x) \land \neg \PR_T(x))$ in our context, and each of these sets of conditions implies the unprovability of one of these consistency statements. 
Here superscripts `H', `L' and `G' stand for Hilbert--Bernays, L\"ob and G\"odel, respectively. 
It is easy to see that $\mathsf{Con}^H$ implies $\mathsf{Con}^L$, and $\mathsf{Con}^L$ implies $\mathsf{Con}^G$. 
However the converse implications do not hold in general. 

In order to clarify the situation of several versions of derivability conditions, in this paper, we investigate relationships between the conditions. 
The following figure shows the situation for implications between prominent sets of conditions for $\Sigma_1$ formulas satisfying $\D{1}$. 

\vspace{0.2in}

\begin{tikzpicture}
\tikzset{dc/.style={draw, rounded corners}};

	\node[dc] (Con2) at (0.36, 0) [right] {$\nvdash \mathsf{Con}^G$};
	\node[dc] (ConS) at (2, 0) [right] {$\nvdash \mathsf{Con}^{\Sigma_1}$};
	\node[dc] (Con1) at (3.8, 0) [right] {$\nvdash \mathsf{Con}^L$};
	\node[dc] (Con0) at (7, 0) [right] {$\nvdash \mathsf{Con}^H$};
	\node[dc] (G2-2) at (4.9, 1.2) [right] {$\BD{2}, \D{3}$};
	\node[dc] (Jeroslow) at (6.5, 1.2) [right] {$\SC$}; 
	\node[dc] (G2-3) at (7.7, 1.2) [right] {$\PC$}; 
	\node[dc] (HB) at (8.7, 1.2) [right] {$\BD{2}, \CB, \DCU$}; 
	\node[dc] (Lob) at (3.75, 2.4) [right] {$\D{2}, \D{3}$}; 
	\node[dc] (BS) at (6.8, 2.4) [right] {$\BD{2}, \SC$}; 
	\node[dc] (HBL) at (4.8, 3.6) [right] {$\D{2}, \SC$}; 
	\node[dc] (Kurahashi) at (7.25, 3.6) [right] {$\BDU{2}$}; 
	\node[dc] (Buchholz) at (4.65, 4.8) [right] {$\DU{1}, \DU{2}$}; 
	\node[dc] (Global1) at (1.19, 6) [right] {$\DU{1}, \DG{2}, \SCG$};
	\node[dc] (Global2) at (1.65, 4.8) [right] {$\DG{2}, \SCG$};
	\node[dc] (Montagna) at (0, 7.2) [right] {$\DG{2}, \PCG$}; 

 	\draw[double,->] (Con2) -- (ConS);
 	\draw[double,->] (ConS) -- (Con1);
	\draw[double,->] (Con1) -- (Con0);
	\draw[double,->] (G2-2) -- (Con0);
	\draw[double,->] (Jeroslow) -- (Con0);
	\draw[double,->] (G2-3) -- (Con0);
	\draw[double,->] (HB) -- (Con0);
	\draw[double,->] (Lob) -- (Con1);
	\draw[double,->] (Lob) -- (G2-2);
	\draw[double,->] (BS) -- (G2-2);
	\draw[double,->] (BS) -- (Jeroslow);
	\draw[double,->] (BS) -- (G2-3);
	\draw[double,->] (HBL) -- (Lob);
	\draw[double,->] (HBL) -- (BS);
	\draw[double,->] (Kurahashi) -- (BS);
	\draw[double,->] (Kurahashi) -- (HB);
	\draw[double,->] (Buchholz) -- (HBL);
	\draw[double,->] (Buchholz) -- (Kurahashi);
	\draw[double,->] (Montagna) -- (Global1);
	\draw[double,->] (Global1) -- (Buchholz);
	\draw[double,->] (Global1) -- (Global2);
	\draw[double,->] (Global2) -- (HBL);
	\draw[double,->] (Global2) -- (ConS);
	\draw[double,->] (Montagna) -- (Con2);

\end{tikzpicture}

\vspace{0.2in}

In Section \ref{Sec:DC}, we introduce and investigate versions of derivability conditions. 
Each of these conditions is classified as one of three versions of derivability conditions, namely, local version, uniform version and global version. 
Among other things, we show that each of two new sets $\{\D{1}, \BD{2}, \D{3}\}$ and $\{\D{1}, \PC\}$ of derivability conditions is sufficient for the unprovability of the consistency statement $\mathsf{Con}^H$ (see the next section for precise definitions of these conditions). 
Then currently we know that four sets $\{\BD{2}, \CB, \DCU\}$, $\{\D{1}, \BD{2}, \D{3}\}$, $\{\D{1}, \SC\}$ and $\{\D{1}, \PC\}$ are sufficient for $T \nvdash \mathsf{Con}^H$, the set $\{\D{1}, \D{2}, \D{3}\}$ (L\"ob's conditions) is sufficient for $T \nvdash \mathsf{Con}^L$, and the set $\{\D{1}, \DG{2}, \PCG\}$ is sufficient for $T \nvdash \mathsf{Con}^G$. 
Here $\{\BD{2}, \CB, \DCU\}$, $\{\D{1}, \SC\}$ and $\{\D{1}, \DG{2}, \PCG\}$ correspond to Hilbert and Bernays' conditions, Jeroslow's conditions and Montagna's conditions, respectively. 

In Section \ref{Sec:MT}, we improve Buchholz's proof of provable $\Sigma_1$-completeness $\SC$. 
More precisely, we prove that if $\PR_T(x)$ satisfies the following condition $\BDU{2}$ which is precisely the $m=2$ case of Buchholz's condition, then the uniform version of $\SC$ holds.
\begin{description}
	\item [$\BDU{2}$] If $\displaystyle T \vdash \forall \vec{x}\left(\varphi(\vec{x}) \to \psi(\vec{x}) \right)$, then $\displaystyle T \vdash \forall \vec{x} (\PR_T(\gdl{\varphi(\vec{\dot{x}})}) \to \PR_T(\gdl{\psi(\vec{\dot{x}})}))$. 
\end{description}

In Section \ref{Sec:W}, we give some examples of formulas, and from these examples, several non-implications between conditions are obtained. 
For instance, from our examples, we obtain that $\{\BD{2}, \CB, \DCU\}$, $\{\D{1}, \BD{2}, \D{3}\}$, $\{\D{1}, \SC\}$ and $\{\D{1}, \PC\}$ are pairwise incomparable, and each of them is not sufficient for $T \nvdash \mathsf{Con}^L$. 
Also we obtain that $\{\D{1}, \D{2}, \D{3}\}$ is not comparable with each of $\{\BD{2}, \CB, \DCU\}$, $\{\D{1}, \SC\}$ and $\{\D{1}, \PC\}$, and it is not sufficient for $T \nvdash \mathsf{Con}^G$. 
Furthermore, we show that even stronger set $\{\DU{1}, \DG{2}, \SCG\}$ is not sufficient for $T \nvdash \mathsf{Con}^G$. 
From the last observation, we can say that both of the Hilbert--Bernays derivability conditions and the Hilbert--Bernays--L\"ob derivability conditions do not accomplish G\"odel's original statement of the second incompleteness theorem.

\section{Derivability conditions}\label{Sec:DC}

Throughout this paper, $S$ and $T$ denote recursively axiomatized consistent extensions of Peano Arithmetic $\PA$ in the language of first-order arithmetic. 
The theory $S$ is intended as a metatheory, and we assume that $T$ is an extension of $S$. 
Let $\mathcal{L}_A$ be the language of arithmetic including $\{0, \mathsf{s}, +, \times\}$, and we can freely use terms corresponding to some primitive recursive functions. 
The numeral $\overline{n}$ for a natural number $n$ is the closed term $\underbrace{\mathsf{s}(\mathsf{s}(\cdots \mathsf{s}}_{n\ \text{times}}(0) \cdots ))$. 
This explicit form of numerals is used in Section \ref{Sec:MT}. 
We fix some natural G\"odel numbering, and for each $\mathcal{L}_A$-formula $\varphi$, let $\gdl{\varphi}$ be the numeral for the G\"odel number of $\varphi$. 
Let $x \dot{\to} y$ and $\dot{\neg}x$ denote primitive recursive terms such that for any formulas $\varphi$ and $\psi$, $\PA \vdash \gdl{\varphi} \dot{\to} \gdl{\psi} = \gdl{\varphi \to \psi}$ and $\PA \vdash \neg \gdl{\varphi} = \gdl{\neg \varphi}$. 

Let $\Delta_0 = \Sigma_0 = \Pi_0$ be the set of all formulas whose quantifiers are all bounded. 
Let $\Sigma_{n+1}$ and $\Pi_{n+1}$ ($n \geq 0$) be the least sets of formulas satisfying the following conditions: 
\begin{enumerate}
	\item $\Sigma_n \cup \Pi_n \subseteq \Sigma_{n+1} \cap \Pi_{n+1}$; 
	\item $\Sigma_{n+1}$ (resp.~$\Pi_{n+1}$) is closed under conjunction, disjunction, bounded quantification, and existential (resp.~universal) quantification; 
	\item If $\varphi$ is in $\Sigma_{n+1}$ (resp.~$\Pi_{n+1}$), then $\neg \varphi$ is in $\Pi_{n+1}$ (resp.~$\Sigma_{n+1}$); 
	\item If $\varphi$ is in $\Sigma_{n+1}$ (resp.~$\Pi_{n+1}$) and $\psi$ is in $\Pi_{n+1}$ (resp.~$\Sigma_{n+1}$), then $\varphi \to \psi$ is in $\Pi_{n+1}$ (resp.~$\Sigma_{n+1}$). 
\end{enumerate}
Throughout this paper, $\Gamma$ denotes $\Sigma_n$ or $\Pi_n$ for some $n \geq 0$. 
We say a formula $\varphi$ is $\Gamma$ if $\varphi \in \Gamma$. 
A formula $\varphi$ is said to be $\Delta_1$ if it is provably equivalent to both some $\Sigma_1$ formula and some $\Pi_1$ formula in $\PA$. 
Let $\Fml(x)$, $\Sent(x)$ and $\Sigma_z(x)$ be $\Delta_1$ formulas saying that ``$x$ is the G\"odel number of an $\mathcal{L}_A$-formula'', ``$x$ is the G\"odel number of an $\mathcal{L}_A$-sentence'' and ``$x$ is the G\"odel number of a $\Sigma_z$ formula'', respectively. 
We assume that $\PA$ can derive natural facts about these formulas such as $\forall z \exists x > z \Fml(x)$. 

We say a formula $\PR(x)$ is a \textit{provability predicate} of a theory $U$ (in $\PA$) if it weakly represents the set of all theorems of $U$ in $\PA$, that is, for any natural number $n$, $\PA \vdash \PR(\overline{n})$ if and only if $n$ is the G\"odel number of some theorem of $U$. 
Also we say a formula $\tau(v)$ is a \textit{numeration} of $U$ (in $\PA$) if it weakly represents the set of all axioms of $U$ in $\PA$, that is, for any natural number $n$, $\PA \vdash \tau(\overline{n})$ if and only if $n$ is the G\"odel number of some axiom of $U$. 
For each numeration $\tau(v)$ of $U$, we can naturally construct a formula $\Prf_\tau(x, y)$ saying that ``$y$ is the code of a proof of a formula with the G\"odel number $x$ from the set of all sentences satisfying $\tau(v)$'' (see Feferman \cite{Fef60}). 
We may assume $\PA \vdash \forall x \forall y(\Prf_\tau(x, y) \to x \leq y)$. 
If $\tau(v)$ is a $\Sigma_n$ numeration of $U$ for $n > 0$, then the formula $\PR_\tau(x) : \equiv \exists y \Prf_\tau(x, y)$ is a $\Sigma_n$ provability predicate of $U$. 
If it is not necessary to specify a particular numeration of $U$, $\Prf_U(x, y)$ and $\PR_U(x)$ denote $\Prf_\tau(x, y)$ and $\PR_\tau(x)$ for some fixed numeration $\tau(v)$ of $U$, respectively. 

For each finitely axiomatized theory $T_0$, let $[T_0](x)$ be the formula $\bigvee_{\varphi \in T_0} (x = \gdl{\varphi})$. 
Then $[T_0](x)$ is a numeration of $T_0$. 
Let $\bigwedge T_0$ be the conjunction of all axioms of $T_0$, and let $\PRL(x)$ be a natural provability predicate of first-order predicate calculus in the language $\mathcal{L}_A$. 
Then the following lemma holds (see Feferman \cite{Fef60}). 

\begin{lem}[Formalized deduction theorem]\label{FDT}
For any finitely axiomatized theory $T_0$, $\PA \vdash \forall x(\PR_{[T_0]}(x) \leftrightarrow \PRL(\gdl{\bigwedge T_0} \dot{\to} x))$. 
\end{lem}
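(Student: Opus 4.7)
The plan is to prove each direction of the biconditional by constructing a primitive recursive transformation on proof codes and then verifying its correctness inside $\PA$.

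For the forward direction, I would define a primitive recursive function $f(x,p)$ that implements the algorithmic proof of the classical deduction theorem: given a code $p$ of a $[T_0]$-derivation of the formula $\varphi_x$ with G\"odel number $x$, output a code of a pure predicate-calculus derivation of $\bigwedge T_0 \to \varphi_x$. The usual construction walks through the input proof line by line, replacing each line $\psi$ by $\bigwedge T_0 \to \psi$ and inserting the propositional tautologies needed to simulate modus ponens and generalization; lines that are themselves axioms of $T_0$ are handled by noting that $\bigwedge T_0 \to \psi$ is a propositional tautology for each conjunct $\psi$. Since $f$ is primitive recursive and the predicates $\Prf_{[T_0]}$ and $\Prf_\emptyset$ are $\Delta_1$, the statement ``for all $x,p$, if $\Prf_{[T_0]}(x,p)$ then $f(x,p)$ codes a pure-logic proof of $\bigwedge T_0 \to \varphi_x$'' is a true $\Pi_1$ sentence, and $\PA$ proves it by a straightforward induction on the length of $p$. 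This yields $\PA \vdash \forall x\bigl(\PR_{[T_0]}(x) \to \PRL(\gdl{\bigwedge T_0} \dot{\to} x)\bigr)$.

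For the converse, I would define a primitive recursive function $g(x,q)$ that, given a pure-logic derivation $q$ of $\bigwedge T_0 \to \varphi_x$, constructs a $[T_0]$-derivation of $\varphi_x$ by listing each axiom $\psi$ of $T_0$ (each derivable in one step from the numeration $[T_0]$, since its G\"odel number satisfies $[T_0]$), combining them via iterated conjunction introduction into $\bigwedge T_0$, appending the given $q$, and concluding with a single modus ponens step. Again the correctness of $g$ is a true $\Pi_1$ statement provable in $\PA$ by induction on proof length, giving the reverse implication.

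The main step requiring care is not any deep obstacle but the bookkeeping: fixing the precise logical axiom schemas encoded by $\Prf_\tau$, the coding convention for proof sequences, and the exact syntactic form of $\bigwedge T_0$ when $T_0$ contains several axioms. Once these conventions are pinned down, both $f$ and $g$ are routine primitive recursive functions and the required $\Pi_1$ correctness statements are provable in $\PA$ by unproblematic inductions on proof length, along the lines sketched in Feferman~\cite{Fef60}. The essential content is just that the deduction theorem and its converse are implemented by effective, in fact primitive recursive, transformations on proofs, a fact that $\PA$ is easily strong enough to verify.
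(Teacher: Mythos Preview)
The paper does not prove this lemma; it merely states it with a pointer to Feferman~\cite{Fef60}. Your sketch supplies precisely the standard argument one finds there: primitive recursive proof transformations implementing the deduction theorem and its converse, with their correctness verified inside $\PA$ by induction on proof length. This is correct and is the intended approach.

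Two minor remarks. First, for the converse direction no induction on the length of $q$ is actually needed: since every pure-logic proof is already a $[T_0]$-proof, your function $g$ just prepends a fixed block deriving $\bigwedge T_0$ and appends one modus ponens, and the verification that the resulting sequence is a $[T_0]$-proof is a direct check rather than an inductive one. Second, the paper later notes that in Feferman's formulation of predicate calculus the sole inference rule is modus ponens and generalization is admissible; if $\PRL$ is set up that way, the generalization clause in your forward transformation is unnecessary, though including it does no harm.
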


Throughout this paper, the formula $\Phi(x)$ is intended to denote some provability predicate of $T$. 
However, we deal with more general situations, that is, $\Phi(x)$ may not be any provability predicate of $T$. 
In this section, we introduce a lot of conditions for $\Phi(x)$ which are satisfied by naturally constructed provability predicates $\PR_T(x)$. 
The remainder of this section is separated into three subsections, and in each of these subsections, we introduce local derivability conditions, uniform derivability conditions and global derivability conditions, respectively. 

For each formula $\Phi(x)$, we define four kinds of consistency statements based on $\Phi(x)$. 

\begin{defn}\leavevmode
\begin{enumerate}
	\item $\Con^H : \equiv \forall x (\Fml(x) \land \Phi(x) \to \neg \Phi(\dot{\neg} x))$. 
	\item $\Con^L : \equiv \neg \Phi(\gdl{0 \neq 0})$. 
	\item $\Con^G : \equiv \exists x (\Fml(x) \land \neg \Phi(x))$. 
	\item $\Con^{\Sigma_1} : \equiv \exists x(\Sigma_1(x) \land \Sent(x) \land \neg \Phi(x))$. 
\end{enumerate}
\end{defn}

The first consistency statement $\Con^H$ is adopted in Hilbert and Bernays \cite{HB39} and Feferman \cite{Fef60}. 
The second sentence $\Con^L$ is the most tractable one, and it is widely used in the context of modal logical investigations of provability predicates. 
G\"odel \cite{Goed31} stated his second incompleteness theorem with the consistency statement $\Con^G$. 
The last consistency statement $\Con^{\Sigma_1}$ states that there exists a $T$-unprovable $\Sigma_1$ sentence.

\subsection{Local derivability conditions}

We introduce the weakest version of derivability conditions which are called local derivability conditions. 

\begin{defn}[Local derivability conditions]
\leavevmode
\begin{description} 
	\item [$\D{1}$] If $T \vdash \varphi$, then $S \vdash \Phi(\gdl{\varphi})$ for any formula $\varphi$. 
	\item [$\D{2}$] $S \vdash \Phi(\gdl{\varphi \to \psi}) \to (\Phi(\gdl{\varphi}) \to \Phi(\gdl{\psi}))$ for any formulas $\varphi$ and $\psi$. 
	\item [$\D{3}$] $S \vdash \Phi(\gdl{\varphi}) \to \Phi(\gdl{\Phi(\gdl{\varphi})})$ for any formula $\varphi$. 
	\item [$\GC$] $S \vdash \varphi \to \Phi(\gdl{\varphi})$ for any $\Gamma$ sentence $\varphi$. 
	\item [$\BD{m}$ ($m \geq 1$)] If $\displaystyle T \vdash \bigwedge_{0 < i < m} \varphi_i \to \varphi_m$, then $\displaystyle S \vdash \bigwedge_{0 < i < m} \Phi(\gdl{\varphi_i}) \to \Phi(\gdl{\varphi_m})$ for any formulas $\varphi_1, \ldots, \varphi_m$. 
	\item [$\PC$] $S \vdash \PRL(\gdl{\varphi}) \to \Phi(\gdl{\varphi})$ for any formula $\varphi$. 
\end{description}
\end{defn}

The condition $\D{1}$ is automatically satisfied by all provability predicates of $T$. 
The conditions $\D{2}$, $\D{3}$ and $\SC$ were introduced by Hilbert and Bernays \cite{HB39}, L\"ob \cite{Lob55} and Feferman \cite{Fef60}, respectively. 
It is known that natural provability predicates $\PR_T(x)$ satisfy full local derivability conditions. 
In particular, Feferman proved $\SC$ for the provability predicate $\PR_\mathsf{Q}(x)$ of Robinson's arithmetic $\mathsf{Q}$ (cf.~\cite{TMR53}). 
The conditions $\BD{m}$ ($m \geq 1$) were introduced by Buchholz \cite{Buc93}. 
The condition $\BD{1}$ is precisely $\D{1}$, and the condition $\BD{2}$ is precisely the condition $\HB{1}$ described in the introduction. 
The condition $\BD{2}$ was also discussed by Montagna \cite{Mon79} and Visser \cite{Vis16}. 
The last condition $\PC$ says that $\Phi(x)$ contains predicate calculus. 

We prove the basic implications between local derivability conditions. 
For example, the first clause of the following proposition says that if a formula $\Phi(x)$ satisfies $\D{1}$, then it also satisfies $\DC$. 

\begin{prop}\label{LP1}\leavevmode
\begin{enumerate}
	\item $\D{1} \Rightarrow \DC$. 
	
	\item $\DC$ and $\BD{m}$ for some $m \geq 1 \Rightarrow \D{1}$. 

	\item $\BD{3} \Rightarrow \D{2}$. 

	\item The following are equivalent: 
	\begin{enumerate}
		\item $\D{1}$ and $\D{2}$. 
		\item $\BD{m}$ for all $m \geq 1$. 
		\item $\D{1}$ and $\BD{m}$ for some $m \geq 3$. 
		\item $\DC$ and $\BD{m}$ for some $m \geq 3$. 
	\end{enumerate}

	\item If $\Phi(x)$ is a $\Gamma$ formula, then $\GC \Rightarrow \D{3}$. 	
	
	\item $\BD{2}$ and $\PC \iff \BD{2}$ and $\SC$. 
	
	\item $\BD{2}$ and $\PC \Rightarrow \D{1}$. 
	
	\item $\D{1}$, $\D{2}$ and $\PC \iff \D{1}$, $\D{2}$ and $\SC$. 

\end{enumerate}
\end{prop}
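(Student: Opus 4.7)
My plan is to verify each clause by choosing appropriate ``pivot sentences'' and feeding them into the available conditions; the only external ingredients I expect to need are that $\PA$ decides every $\Delta_0$ sentence, Feferman's provable $\Sigma_1$-completeness $\PA \vdash \varphi \to \PR_\mathsf{Q}(\gdl{\varphi})$ for $\Sigma_1$ sentences, the formalized deduction theorem (Lemma~\ref{FDT}), and local reflection for pure logic, $\PA \vdash \PRL(\gdl{\varphi}) \to \varphi$ for each fixed $\varphi$, which is justifiable via cut elimination together with partial truth predicates. Clauses (1), (3), (5), (7), (8), and (2) are each essentially one-step: (1) splits on the truth value of the $\Delta_0$ sentence $\varphi$ and invokes $\D{1}$ in the true case; (3) applies $\BD{3}$ to the tautology $(\varphi \to \psi) \land \varphi \to \psi$; (5) observes that $\Phi(\gdl{\varphi})$ is itself a $\Gamma$ sentence and applies $\GC$ to it; (7) combines $\BD{2}$ applied to $T \vdash 0=0 \to \varphi$ with $\PC$ applied to the logical tautology $0=0$ (using $\PA \vdash \PRL(\gdl{0=0})$) to supply $S \vdash \Phi(\gdl{0=0})$; (8) reduces to (6) since $\{\D{1}, \D{2}\}$ implies $\BD{2}$ by (4); and (2) pads $T \vdash \varphi$ into $T \vdash (0=0) \land \cdots \land (0=0) \to \varphi$ with $m-1$ conjuncts, applies $\BD{m}$, and discharges each $\Phi(\gdl{0=0})$ antecedent via $\DC$.

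For (4), I close the cycle (a)$\Rightarrow$(b)$\Rightarrow$(c)$\Rightarrow$(d)$\Rightarrow$(a). The first arrow is an easy induction on $m$ using $\D{1}$ on the hypothesis and $\D{2}$ to peel off conjuncts (rewriting $\bigwedge_{0 < i \leq m} \varphi_i \to \varphi_{m+1}$ as $\bigwedge_{0 < i < m} \varphi_i \to (\varphi_m \to \varphi_{m+1})$ and applying the inductive step followed by $\D{2}$); the second is trivial; the third is clause (1). For (d)$\Rightarrow$(a), clause (2) yields $\D{1}$, and $\D{2}$ follows by applying $\BD{m}$ (for the given $m \geq 3$) to the tautology $(\varphi \to \psi) \land \varphi \land (0=0) \land \cdots \land (0=0) \to \psi$ with $m-3$ padding copies of $0=0$, then discharging the extra $\Phi(\gdl{0=0})$ antecedents via $\D{1}$.

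Clause (6) is the substantive one. For $\PC \Rightarrow \SC$, given a $\Sigma_1$ sentence $\varphi$, Feferman's theorem combined with Lemma~\ref{FDT} rewrites $\PA \vdash \varphi \to \PR_\mathsf{Q}(\gdl{\varphi})$ as $\PA \vdash \varphi \to \PRL(\gdl{\bigwedge \mathsf{Q} \to \varphi})$; then $\PC$ pushes this into $\Phi$, and $\BD{2}$ applied to the $T$-provable $(\bigwedge \mathsf{Q} \to \varphi) \to \varphi$ (using $T \vdash \bigwedge \mathsf{Q}$) strips off the leftover $\bigwedge \mathsf{Q}$ antecedent. For $\SC \Rightarrow \PC$, applying $\SC$ to the $\Sigma_1$ sentence $\PRL(\gdl{\varphi})$ gives $S \vdash \PRL(\gdl{\varphi}) \to \Phi(\gdl{\PRL(\gdl{\varphi})})$, and $\BD{2}$ applied to the $T$-provable instance $\PRL(\gdl{\varphi}) \to \varphi$ transports $\Phi$ from $\gdl{\PRL(\gdl{\varphi})}$ to $\gdl{\varphi}$. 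The main technical obstacle is establishing this last $T$-provability for each fixed $\varphi$: I would pick $n$ with $\varphi \in \Sigma_n \cup \Pi_n$, use the $\PA$-provable Tarski biconditional $\varphi \leftrightarrow \mathsf{Tr}_n(\gdl{\varphi})$, and combine it with $\PA$-provable $\Sigma_n$-soundness of cut-free pure-logic derivations (whose intermediate formulas remain of complexity at most $n$ by the subformula property).
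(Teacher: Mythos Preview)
Your proof is correct and follows essentially the same route as the paper's; the only cosmetic differences are that for clause~7 the paper deduces $\D{1}$ indirectly via clauses~2 and~6 (going through $\SC \Rightarrow \DC$) rather than by your direct use of $\PC$ on the logical validity $0=0$, and for the $(\Leftarrow)$ direction of clause~6 the paper simply cites reflexiveness of $\PA$ for the needed $T \vdash \PRL(\gdl{\varphi}) \to \varphi$ rather than sketching the cut-elimination/partial-truth argument.
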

\begin{proof}
1. Suppose $\Phi(x)$ satisfies $\D{1}$. 
Let $\varphi$ be any $\Delta_0$ sentence. 
Then $\varphi$ is decidable in $\PA$. 
If $\PA \vdash \varphi$, then $S \vdash \Phi(\gdl{\varphi})$ by $\D{1}$, and hence $S \vdash \varphi \to \Phi(\gdl{\varphi})$. 
If $\PA \vdash \neg \varphi$, then $S \vdash \varphi \to \Phi(\gdl{\varphi})$. 

2. Suppose $\Phi(x)$ satisfies $\DC$ and $\BD{m}$ for some $m \geq 1$. 
Let $\varphi$ be any formula with $T \vdash \varphi$. 
Then $T \vdash \underbrace{0=0 \land \cdots \land 0=0}_{m - 1} \to \varphi$. 
By $\BD{m}$, we have $S \vdash \Phi(\gdl{0=0}) \to \Phi(\gdl{\varphi})$. 
By $\DC$, $S \vdash 0 = 0 \to \Phi(\gdl{0=0})$, and hence $S \vdash \Phi(\gdl{0=0})$. 
We conclude $S \vdash \Phi(\gdl{\varphi})$. 

3. Since $T \vdash (\varphi \to \psi) \land \varphi \to \psi$, we obtain $S \vdash \Phi(\gdl{\varphi \to \psi}) \land \Phi(\gdl{\varphi}) \to \Phi(\gdl{\psi})$ by $\BD{3}$. 

4. $(a) \Rightarrow (b)$ is well-known in the context of modal logic. 
$(b) \Rightarrow (c)$ is trivial. 
$(c) \Leftrightarrow (d)$ follows from clauses 1 and 2. 
We prove $(c) \Rightarrow (a)$: 
Suppose $\Phi(x)$ satisfies $\D{1}$ and $\BD{m}$ for some $m \geq 3$. 
By clause 3, it suffices to prove that $\Phi(x)$ satisfies $\BD{3}$. 
Suppose $T \vdash \varphi_1 \land \varphi_2 \to \varphi_3$. 
Then $T \vdash \varphi_1 \land \varphi_2 \land \underbrace{0=0 \land \cdots \land 0=0}_{m - 3} \to \varphi_3$. 
By $\BD{m}$, we obtain $S \vdash \Phi(\gdl{\varphi_1}) \land \Phi(\gdl{\varphi_2}) \land \Phi(\gdl{0=0}) \to \Phi(\gdl{\varphi_3})$. 
By $\D{1}$, we have $S \vdash \Phi(\gdl{0=0})$. 
Hence $S \vdash \Phi(\gdl{\varphi_1}) \land \Phi(\gdl{\varphi_2}) \to \Phi(\gdl{\varphi_3})$. 

5. Trivial. 

6. $(\Rightarrow)$: 
Assume that $\Phi(x)$ satisfies $\BD{2}$ and $\PC$. 
Let $\varphi$ be any $\Sigma_1$ sentence. 
Let $T_0$ be some finite subtheory of $T$ containing Robinson's arithmetic $\mathsf{Q}$.  
By $\PC$, $S \vdash \PRL(\gdl{\bigwedge T_0 \to \varphi}) \to \Phi(\gdl{\bigwedge T_0 \to \varphi})$. 
Here $\PRL(\gdl{\bigwedge T_0 \to \varphi})$ is equivalent to $\PR_{[T_0]}(\gdl{\varphi})$ by formalized deduction theorem (Lemma \ref{FDT}), and therefore we obtain $S \vdash \PR_{[T_0]}(\gdl{\varphi}) \to \Phi(\gdl{\bigwedge T_0 \to \varphi})$. 
Since $T_0$ is a subtheory of $T$, we have $T \vdash (\bigwedge T_0 \to \varphi) \to \varphi$. 
By $\BD{2}$, $S \vdash \Phi(\gdl{\bigwedge T_0 \to \varphi}) \to \Phi(\gdl{\varphi})$. 
Thus we obtain $S \vdash \PR_{[T_0]}(\gdl{\varphi}) \to \Phi(\gdl{\varphi})$. 
Since $T_0$ contains $\mathsf{Q}$, $\SC$ holds for $\PR_{[T_0]}(x)$, and hence $S \vdash \varphi \to \PR_{[T_0]}(\gdl{\varphi})$. 
Therefore $S \vdash \varphi \to \Phi(\gdl{\varphi})$. 

$(\Leftarrow)$: 
Suppose $\Phi(x)$ satisfies $\BD{2}$ and $\SC$. 
Let $\varphi$ be any formula. 
Since $\PRL(\gdl{\varphi})$ is a $\Sigma_1$ sentence, $S \vdash \PRL(\gdl{\varphi}) \to \Phi(\gdl{\PRL(\gdl{\varphi})})$. 
Since $T$ is an extension of $\PA$, $T \vdash \PRL(\gdl{\varphi}) \to \varphi$ by the reflexiveness of $\PA$ (see \cite{Lin03}). 
By $\BD{2}$, $S \vdash \Phi(\gdl{\PRL(\gdl{\varphi})}) \to \Phi(\gdl{\varphi})$. 
Therefore $S \vdash \PRL(\gdl{\varphi}) \to \Phi(\gdl{\varphi})$.  

7. This follows from clauses 2 and 6. 

8. This equivalence follows from clauses 4 and 6. 
\end{proof}

Before describing several versions of the second incompleteness theorem, we prepare two propositions. 

\begin{prop}\label{LP2}\leavevmode
\begin{enumerate}
	\item If $\Phi(x)$ satisfies $\D{1}$, then $S \vdash \Con^H \to \Con^L$. 
	\item $\PA \vdash \Con^L \to \Con^{\Sigma_1}$. 
	\item $\PA \vdash \Con^{\Sigma_1} \to \Con^G$. 
\end{enumerate}
\end{prop}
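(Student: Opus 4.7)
The three clauses are all essentially book-keeping arguments that rest on the stipulated behaviour of the primitive recursive syntactic operations and the assumption that $\PA$ proves the natural facts about $\Fml$, $\Sent$ and $\Sigma_z$. Below I sketch the plan for each clause; none of them requires a non-trivial idea, but some care is needed to line up the various syntactic conventions.

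For clause 1, the plan is to reason in $S$ under the hypothesis $\Con^H$ and show $\neg \Phi(\gdl{0 \neq 0})$. Because $T \vdash 0 = 0$, condition $\D{1}$ gives $S \vdash \Phi(\gdl{0 = 0})$. Since $\PA$ proves $\Fml(\gdl{0 = 0})$ and, by the defining property of the primitive recursive term $\dot{\neg}$, proves $\dot{\neg}\,\gdl{0 = 0} = \gdl{\neg(0 = 0)} = \gdl{0 \neq 0}$, instantiating $\Con^H$ at $x = \gdl{0 = 0}$ yields $\neg \Phi(\gdl{0 \neq 0})$, i.e.\ $\Con^L$. The only delicate point is the syntactic identity $\neg(0 = 0) \equiv 0 \neq 0$, which is a convention already in force.

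For clause 2, it suffices to observe that $0 \neq 0$ is a $\Delta_0$ (hence $\Sigma_1$) sentence, so $\PA$ proves $\Sigma_1(\gdl{0 \neq 0}) \land \Sent(\gdl{0 \neq 0})$. Assuming $\Con^L$, i.e.\ $\neg \Phi(\gdl{0 \neq 0})$, the natural number $\gdl{0 \neq 0}$ is an explicit witness for the existential in $\Con^{\Sigma_1}$. So the proof is just an application of existential introduction inside $\PA$.

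For clause 3, I will use the assumed fact that $\PA \vdash \forall x(\Sent(x) \to \Fml(x))$ (every sentence is a formula; this is one of the natural facts we are allowed to assume). Given a witness $x$ for $\Con^{\Sigma_1}$, the same $x$ satisfies $\Fml(x) \land \neg \Phi(x)$, which is a witness for $\Con^G$.

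The main (mild) obstacle across all three parts is ensuring that each manipulation of coded syntax—namely $\dot{\neg}\,\gdl{0=0} = \gdl{0 \neq 0}$, $\Sigma_1(\gdl{0 \neq 0})$, $\Sent(\gdl{0 \neq 0})$, and $\Sent(x) \to \Fml(x)$—is among the basic facts that we stipulated $\PA$ proves. No further inductive or combinatorial work is expected.
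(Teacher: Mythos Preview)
Your proof is correct and follows essentially the same approach as the paper. The paper's proof of clause 1 is identical to yours (use $\D{1}$ to obtain $S \vdash \Phi(\gdl{0=0})$ and instantiate $\Con^H$ at $\gdl{0=0}$), and for clauses 2 and 3 the paper simply declares them ``obvious'', which your spelled-out witnesses ($\gdl{0 \neq 0}$ for clause 2, and $\Sent(x) \to \Fml(x)$ for clause 3) make explicit.
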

\begin{proof}
1. Suppose $\Phi(x)$ satisfies $\D{1}$, then $S \vdash \Phi(\gdl{0=0})$. 
Since $\PA \vdash \Con^H \to (\Phi(\gdl{0=0}) \to \neg \Phi(\gdl{0 \neq 0}))$, we have $S \vdash \Con^H \to \Con^L$. 

Clauses 2 and 3 are obvious. 
\end{proof}

The following proposition is a part of G\"odel's first incompleteness theorem. 

\begin{prop}\label{LP3}
Let $\varphi$ be a sentence satisfying $\PA \vdash \varphi \leftrightarrow \neg \Phi(\gdl{\varphi})$. 
If $\Phi(x)$ satisfies $\D{1}$, then $T \nvdash \varphi$. 
\end{prop}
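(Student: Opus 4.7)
The plan is to argue by contradiction, following the standard diagonal argument underlying Gödel's first incompleteness theorem, and to observe that only $\D{1}$ is needed on $\Phi$.

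Suppose for a contradiction that $T \vdash \varphi$. Applying $\D{1}$ to this assumption, I obtain $S \vdash \Phi(\gdl{\varphi})$. Since $T$ is assumed throughout the paper to be an extension of $S$, this upgrades to $T \vdash \Phi(\gdl{\varphi})$. On the other hand, the fixed-point equivalence $\PA \vdash \varphi \leftrightarrow \neg \Phi(\gdl{\varphi})$ is inherited by $T$, because $T$ extends $\PA$. Combining this equivalence with the assumption $T \vdash \varphi$ gives $T \vdash \neg \Phi(\gdl{\varphi})$.

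Hence $T$ proves both $\Phi(\gdl{\varphi})$ and $\neg \Phi(\gdl{\varphi})$, contradicting the standing hypothesis that $T$ is consistent. Therefore $T \nvdash \varphi$.

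There is no genuine obstacle: the argument uses only the existence of the fixed point (guaranteed by the usual diagonal lemma in $\PA$), the external condition $\D{1}$, and the consistency of $T$. It is worth emphasizing what is \emph{not} needed here, namely $\D{2}$, $\D{3}$, any form of provable $\Sigma_1$-completeness, or any assumption that $\Phi$ be a provability predicate in the sense defined earlier. The reason is that we only have to transport the single external fact $T \vdash \varphi$ through $\Phi$ once, which $\D{1}$ handles directly; no internal formalization of provability inside $S$ is required.
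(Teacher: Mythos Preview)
Your proof is correct and follows essentially the same contradiction argument as the paper: assume $T \vdash \varphi$, apply $\D{1}$ to get provability of $\Phi(\gdl{\varphi})$, and use the fixed-point equivalence to obtain a contradiction with the consistency of $T$. The only cosmetic difference is that the paper derives $S \vdash \neg \varphi$ and then invokes $T \supseteq S$, whereas you lift everything to $T$ and contradict $\Phi(\gdl{\varphi})$ with $\neg \Phi(\gdl{\varphi})$ there; this is immaterial.
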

\begin{proof}
Suppose $\Phi(x)$ satisfies $\D{1}$. 
If $T \vdash \varphi$, then by $\D{1}$, $S \vdash \Phi(\gdl{\varphi})$. 
By the choice of $\varphi$, $S \vdash \neg \varphi$. 
This contradicts the consistency of $T$ because $T$ is an extension of $S$. 
Therefore $T \nvdash \varphi$. 
\end{proof}

It is well-known that for proofs of the second incompleteness theorem, the Hilbert--Bernays--L\"ob derivability conditions $\D{1}$, $\D{2}$ and $\D{3}$ are sufficient. 
This is essentially due to L\"ob (see \cite{Boo93,Lin03}). 

\begin{thm}[L\"ob \cite{Lob55}]\label{G2}\leavevmode
If $\Phi(x)$ satisfies $\D{1}$, $\D{2}$ and $\D{3}$, then $T \nvdash \Con^L$. 
\end{thm}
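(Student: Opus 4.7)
The plan is to apply G\"odel's fixed-point lemma to obtain a sentence $\varphi$ satisfying $\PA \vdash \varphi \leftrightarrow \neg \Phi(\gdl{\varphi})$. Since $\Phi(x)$ satisfies $\D{1}$, Proposition \ref{LP3} immediately yields $T \nvdash \varphi$. It therefore suffices to establish $S \vdash \Con^L \to \varphi$, because then $T \vdash \Con^L$ would give $T \vdash \varphi$ (as $T$ extends $S$), contradicting the previous line.

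To derive $S \vdash \Con^L \to \varphi$, I will formalize inside $S$ the argument that the assumption $\Phi(\gdl{\varphi})$ leads to $\Phi(\gdl{0 \neq 0})$. First, from the fixed-point equivalence, $T \vdash \varphi \to \neg \Phi(\gdl{\varphi})$; applying $\D{1}$ to this and then $\D{2}$ gives
\[
S \vdash \Phi(\gdl{\varphi}) \to \Phi(\gdl{\neg \Phi(\gdl{\varphi})}).
\]
Secondly, $\D{3}$ yields $S \vdash \Phi(\gdl{\varphi}) \to \Phi(\gdl{\Phi(\gdl{\varphi})})$. Thirdly, applying $\D{1}$ to the predicate-calculus tautology $\Phi(\gdl{\varphi}) \to (\neg \Phi(\gdl{\varphi}) \to 0 \neq 0)$ and then invoking $\D{2}$ twice produces
\[
S \vdash \Phi(\gdl{\Phi(\gdl{\varphi})}) \to (\Phi(\gdl{\neg \Phi(\gdl{\varphi})}) \to \Phi(\gdl{0 \neq 0})).
\]
Chaining these three implications inside $S$ gives $S \vdash \Phi(\gdl{\varphi}) \to \Phi(\gdl{0 \neq 0})$, i.e., $S \vdash \Con^L \to \neg \Phi(\gdl{\varphi})$, and the fixed-point equivalence then yields $S \vdash \Con^L \to \varphi$.

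Since the entire argument is the classical L\"ob-style diagonalization, there is no real obstacle; the only care needed is in the repeated applications of $\D{1}$ and $\D{2}$ that internalize the Boolean reasoning, and in observing that $\D{3}$ is used exactly once, at the step which converts $\Phi(\gdl{\varphi})$ into $\Phi(\gdl{\Phi(\gdl{\varphi})})$ and thereby allows the diagonal chase to close.
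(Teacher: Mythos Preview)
Your argument is correct and is the standard direct proof of the second incompleteness theorem from the Hilbert--Bernays--L\"ob conditions via a G\"odel sentence. The paper itself does not spell out a proof of this theorem: it records it as a well-known result due to L\"ob, and in the introduction indicates the alternative route of first proving L\"ob's theorem (from $\D{1}$, $\D{2}$, $\D{3}$) and then observing that $T \vdash \Con^L$ would mean $T \vdash \Phi(\gdl{0\neq 0}) \to 0\neq 0$, whence $T \vdash 0\neq 0$ by L\"ob's theorem. Your direct argument and the L\"ob-theorem route are essentially the same diagonalization unwound in slightly different orders, so there is no substantive difference.
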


Notice that $\{\D{1}, \BD{2}, \D{3}\}$ is weaker than $\{\D{1}, \D{2}, \D{3}\}$ by Proposition \ref{LP1}.4. 
For the former conditions, we obtain another version of the second incompleteness theorem. 

\begin{thm}\label{G2-2}\leavevmode
If $\Phi(x)$ satisfies $\D{1}$, $\BD{2}$ and $\D{3}$, then $T \nvdash \Con^H$. 
\end{thm}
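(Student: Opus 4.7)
The plan is to follow the Gödel--Löb fixed point argument used for the standard second incompleteness theorem, but be careful to invoke $\BD{2}$ rather than $\D{2}$ in the one step where this matters. Let $\varphi$ be a Gödel fixed point with $\PA \vdash \varphi \leftrightarrow \neg \Phi(\gdl{\varphi})$; by Proposition \ref{LP3} and $\D{1}$, we already know $T \nvdash \varphi$. The goal is to derive $T \vdash \varphi$ from the assumption $T \vdash \Con^H$, reaching a contradiction.

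The key formalized inequality to establish inside $S$ is
\[
S \vdash \Phi(\gdl{\varphi}) \to \Phi(\gdl{\neg \varphi}).
\]
First, by $\D{3}$ we have $S \vdash \Phi(\gdl{\varphi}) \to \Phi(\gdl{\Phi(\gdl{\varphi})})$. Secondly, from the fixed point equivalence we obtain $T \vdash \Phi(\gdl{\varphi}) \to \neg \varphi$, so applying $\BD{2}$ to this $T$-provable implication yields $S \vdash \Phi(\gdl{\Phi(\gdl{\varphi})}) \to \Phi(\gdl{\neg \varphi})$. Chaining these two gives the displayed formula. This is precisely the spot where the usual argument appeals to $\D{2}$ together with $\D{1}$ applied to an implication proved inside $T$; in our weaker setting the combination of $\D{3}$ and $\BD{2}$ accomplishes the same bookkeeping.

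Now assume for contradiction that $T \vdash \Con^H$. Since $\gdl{\varphi}$ is a specific numeral, $\PA \vdash \Fml(\gdl{\varphi})$ and $\PA \vdash \dot{\neg}\gdl{\varphi} = \gdl{\neg \varphi}$, so instantiating the universal statement $\Con^H$ yields $T \vdash \Phi(\gdl{\varphi}) \to \neg \Phi(\gdl{\neg \varphi})$. Combining with the inequality above (and the fact that $T$ extends $S$) gives $T \vdash \neg \Phi(\gdl{\varphi})$, and hence $T \vdash \varphi$ by the fixed point equivalence, contradicting Proposition \ref{LP3}. Therefore $T \nvdash \Con^H$.

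The only subtle point I expect is making sure $\BD{2}$ really is enough in place of $\D{2}$: we never need to internalize an arbitrary implication under $\Phi$, we only need to push the specific $T$-provable implication $\Phi(\gdl{\varphi}) \to \neg \varphi$ through $\Phi$, and that is exactly what $\BD{2}$ delivers. All other steps (passing from $\Con^H$ to the concrete instance at $\gdl{\varphi}$, and applying the fixed point) are routine consequences of $\PA$-provable facts about the numeral $\gdl{\varphi}$ and the term $\dot{\neg}$.
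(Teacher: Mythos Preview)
Your proof is correct and follows essentially the same approach as the paper: take a fixed point $\varphi$ with $\PA \vdash \varphi \leftrightarrow \neg \Phi(\gdl{\varphi})$, use $\D{3}$ and $\BD{2}$ (applied to the $T$-provable implication $\Phi(\gdl{\varphi}) \to \neg \varphi$) to obtain $S \vdash \Phi(\gdl{\varphi}) \to \Phi(\gdl{\neg \varphi})$, and then combine with $\Con^H$ and Proposition~\ref{LP3}. The only cosmetic difference is that the paper phrases the conclusion as $S \vdash \Con^H \to \varphi$ rather than arguing by contradiction in $T$, but the content is identical.
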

\begin{proof}
Suppose $\Phi(x)$ satisfies $\D{1}$, $\BD{2}$ and $\D{3}$. 
Let $\varphi$ be a sentence satisfying $\PA \vdash \varphi \leftrightarrow \neg \Phi(\gdl{\varphi})$. 
The existence of such a sentence $\varphi$ follows from the Fixed Point Lemma (see \cite{Lin03}). 
Since $T \vdash \Phi(\gdl{\varphi}) \to \neg \varphi$, we have $S \vdash \Phi(\gdl{\Phi(\gdl{\varphi})}) \to \Phi(\gdl{\neg \varphi})$ by $\BD{2}$. 
By $\D{3}$, $S \vdash \Phi(\gdl{\varphi}) \to \Phi(\gdl{\Phi(\gdl{\varphi})})$. 
Thus $S \vdash \Phi(\gdl{\varphi}) \to \Phi(\gdl{\neg \varphi})$, and hence $S \vdash \neg \varphi \to \exists x(\Fml(x) \land \Phi(x) \land \Phi(\dot{\neg} x))$. 
It follows $S \vdash \Con^H \to \varphi$. 
By Proposition \ref{LP3}, $T \nvdash \varphi$, and thus $T \nvdash \Con^H$. 
\end{proof}

Jeroslow \cite{Jer73} proved that if $\mathcal{L}_A$ contains sufficiently many primitive recursive terms and if $\Phi(x)$ satisfies $\D{1}$ and $S \vdash \Phi(t) \to \Phi(\gdl{\Phi(t)})$ for all primitive recursive terms $t$, then $T \nvdash \Con^H$. 
That is to say, in Theorem \ref{G2-2}, if we strengthen the condition $\D{3}$ in this way, then the condition $\BD{2}$ can be omitted.  
As a consequence, Jeroslow remarked that if $\Phi(x)$ is a $\Gamma$ formula, then the conditions $\D{1}$ and $\GC$ are sufficient for the unprovability of $\Con^H$ in Jersolow's setting of language. 
We show that this is also the case without using such sufficiently many primitive recursive terms. 

\begin{thm}[Jeroslow \cite{Jer73}; Kreisel and Takeuti \cite{KT74}]\label{Jer}
If $\Phi(x)$ is a $\Gamma$ formula satisfying $\D{1}$ and $\GC$, then $T \nvdash \Con^H$. 
\end{thm}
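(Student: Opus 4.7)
The plan is to adapt the proof of Theorem \ref{G2-2} by replacing the appeal to $\BD{2}$ with a direct application of $\GC$. Whereas the proof of Theorem \ref{G2-2} passes from $T\vdash \Phi(\gdl{\varphi})\to\neg\varphi$ to $S\vdash \Phi(\gdl{\Phi(\gdl{\varphi})})\to\Phi(\gdl{\neg\varphi})$ via $\BD{2}$ and then uses $\D{3}$, we can short-circuit this step: if the fixed point $\varphi$ is chosen so that $\neg\varphi$ is \emph{syntactically} in $\Gamma$, then $\GC$ itself immediately yields $S\vdash \neg\varphi\to\Phi(\gdl{\neg\varphi})$.

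Concretely, the first step is to apply the Fixed Point Lemma carefully to obtain a sentence $\varphi$ with $\PA\vdash\varphi\leftrightarrow\neg\Phi(\gdl{\varphi})$ in such a way that $\varphi$ itself syntactically belongs to the dual class $\overline{\Gamma}$ (this is possible because $\neg\Phi(x)$ belongs to $\overline{\Gamma}$, and the standard diagonal construction can be arranged in either existential or universal form to match the complexity of its input). Then $\neg\varphi$ is syntactically $\Gamma$, and $\GC$ gives $S\vdash \neg\varphi\to\Phi(\gdl{\neg\varphi})$. Combining this with the metatheoretic equivalence $\PA\vdash \neg\varphi\leftrightarrow\Phi(\gdl{\varphi})$ yields
$$S\vdash \Phi(\gdl{\varphi})\to\Phi(\gdl{\neg\varphi}).$$

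Instantiating $\Con^H$ at $x=\gdl{\varphi}$ reads $\Phi(\gdl{\varphi})\to\neg\Phi(\gdl{\neg\varphi})$, so combining with the previous implication gives $S\vdash \Con^H\to\neg\Phi(\gdl{\varphi})$, which by the fixed point property is $S\vdash \Con^H\to\varphi$. Since $\D{1}$ holds, Proposition \ref{LP3} yields $T\nvdash\varphi$, and therefore $T\nvdash\Con^H$.

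The main obstacle is the complexity-matching refinement of the Fixed Point Lemma in the first step: we need $\neg\varphi$ to be syntactically $\Gamma$, not merely $\PA$-provably equivalent to a $\Gamma$ formula, since $\GC$ is formulated as a syntactic condition. This is routine for $\Gamma=\Sigma_n$ or $\Pi_n$ with $n\geq 1$, using the appropriate quantified form of the diagonal construction to accommodate $\neg\Phi(x)\in\overline{\Gamma}$. The edge case $\Gamma=\Delta_0$ is degenerate, as $\GC$ then already follows from $\D{1}$ by Proposition \ref{LP1}.1.
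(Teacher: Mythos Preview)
Your proof is correct and is essentially the same argument as the paper's, up to replacing the fixed point by its negation. The paper takes a $\Gamma$ sentence $\varphi$ with $\PA \vdash \varphi \leftrightarrow \Phi(\gdl{\neg\varphi})$; since $\Phi(x)\in\Gamma$, the Fixed Point Lemma yields such a $\varphi$ directly in $\Gamma$ with no need to reason about the complexity of a negation, and then $\GC$ gives $S \vdash \varphi \to \Phi(\gdl{\varphi}) \land \Phi(\gdl{\neg\varphi})$ in one step. Your G\"odel sentence is precisely $\neg\varphi$ for this $\varphi$, so the ``main obstacle'' you flag disappears with the paper's choice of fixed point.
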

\begin{proof}
Let $\varphi$ be a $\Gamma$ sentence such that $\PA \vdash \varphi \leftrightarrow \Phi(\gdl{\neg \varphi})$. 
By Proposition \ref{LP3}, $T \nvdash \neg \varphi$ because of $\D{1}$. 
By $\GC$ and the choice of $\varphi$, $S \vdash \varphi \to \Phi(\gdl{\varphi}) \land \Phi(\gdl{\neg \varphi})$. 
Then we have $S \vdash \varphi \to \neg \Con^H$. 
Therefore $T \nvdash \Con^H$. 
\end{proof}

By Proposition \ref{LP1}.8 and Theorem \ref{G2}, if $\Phi(x)$ is a $\Sigma_1$ formula satisfying $\D{1}$, $\D{2}$ and $\PC$, then $T \nvdash \Con^L$. 
Also by Proposition \ref{LP1}.6 and Theorem \ref{Jer}, if $\Phi(x)$ is a $\Sigma_1$ formula satisfying $\D{1}$, $\BD{2}$ and $\PC$, then $T \nvdash \Con^H$. 
We improve the latter statement as follows. 

\begin{thm}\label{G2-3}
If $\Phi(x)$ is a $\Sigma_1$ formula satisfying $\D{1}$ and $\PC$, then $T \nvdash \Con^H$.
\end{thm}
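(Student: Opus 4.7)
The plan is to adapt Jeroslow's argument from Theorem \ref{Jer}, replacing the direct use of $\SC$ for $\Phi$ by a detour through a finitely axiomatized auxiliary theory and routing $\Sigma_1$-completeness back into $\Phi$ via $\PC$. First I would fix a finite subtheory $T_0$ of $\PA$ containing Robinson's arithmetic $\mathsf{Q}$, so that $\SC$ is available for $\PR_{[T_0]}(x)$, and so that $T \vdash \bigwedge T_0$. By the Fixed Point Lemma, let $\varphi$ be a $\Sigma_1$ sentence satisfying $\PA \vdash \varphi \leftrightarrow \Phi(\gdl{\neg(\bigwedge T_0 \to \varphi)})$; such a $\varphi$ exists because $\Phi$ is $\Sigma_1$.

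The core computation is to show $S \vdash \varphi \to \Phi(\gdl{\bigwedge T_0 \to \varphi})$. Since $\varphi$ is $\Sigma_1$ and $T_0 \supseteq \mathsf{Q}$, $\SC$ holds for $\PR_{[T_0]}$ and yields $S \vdash \varphi \to \PR_{[T_0]}(\gdl{\varphi})$. By the formalized deduction theorem (Lemma \ref{FDT}) this is equivalent to $S \vdash \varphi \to \PRL(\gdl{\bigwedge T_0 \to \varphi})$, and $\PC$ then delivers $S \vdash \varphi \to \Phi(\gdl{\bigwedge T_0 \to \varphi})$. Combined with the fixed-point equivalence, which supplies $S \vdash \varphi \to \Phi(\dot{\neg}\gdl{\bigwedge T_0 \to \varphi})$, this gives $S \vdash \varphi \to \Phi(\gdl{\alpha}) \land \Phi(\dot{\neg}\gdl{\alpha})$ for the witness $\alpha := \bigwedge T_0 \to \varphi$, whence $S \vdash \Con^H \to \neg \varphi$.

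To close the argument, I would apply Proposition \ref{LP3} to the sentence $\psi := \neg(\bigwedge T_0 \to \varphi)$. Because $\PA \vdash \bigwedge T_0$, we have $\PA \vdash \psi \leftrightarrow \neg \varphi$, and hence via the fixed point $\PA \vdash \psi \leftrightarrow \neg \Phi(\gdl{\psi})$. Proposition \ref{LP3}, which needs only $\D{1}$, then yields $T \nvdash \psi$, i.e.\ $T \nvdash \neg \varphi$; combined with $S \vdash \Con^H \to \neg \varphi$, this forces $T \nvdash \Con^H$.

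The main obstacle is engineering the fixed point so that $\PC$ can do the work of $\SC$ without $\BD{2}$. Because $\BD{2}$ is unavailable, one cannot strip off the antecedent $\bigwedge T_0$ after applying $\PC$, which is exactly the gap exposed in the $(\Rightarrow)$ direction of Proposition \ref{LP1}.6. The remedy is to choose the diagonal sentence so that the intended inconsistency witness $\alpha$ already carries the prefix $\bigwedge T_0 \to$, and so that the literal syntactic negation of $\alpha$ is precisely the sentence appearing inside $\Phi(\cdot)$ on the right-hand side of the fixed-point equivalence; this is what makes the two formulas produced in the second step genuinely a formula and its negation, as required by $\Con^H$.
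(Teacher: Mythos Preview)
Your proof is correct and follows essentially the same route as the paper's: the same fixed point $\varphi \leftrightarrow \Phi(\gdl{\neg(\bigwedge T_0 \to \varphi)})$, the same detour through $\SC$ for $\PR_{[T_0]}$ plus formalized deduction plus $\PC$ to get $S \vdash \varphi \to \Phi(\gdl{\bigwedge T_0 \to \varphi})$, and the same consistency witness $\alpha = \bigwedge T_0 \to \varphi$. The only cosmetic difference is the closing step: the paper argues directly that $T \vdash \Con^H$ would yield $T \vdash \neg(\bigwedge T_0 \to \varphi)$ and hence, by $\D{1}$, $S \vdash \varphi$, a contradiction; you instead take $T_0 \subseteq \PA$ so that $\psi := \neg(\bigwedge T_0 \to \varphi)$ literally satisfies $\PA \vdash \psi \leftrightarrow \neg \Phi(\gdl{\psi})$ and invoke Proposition~\ref{LP3}, which amounts to the same computation packaged differently.
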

\begin{proof}
Suppose that $\Phi(x)$ is $\Sigma_1$ and satisfies $\D{1}$ and $\PC$. 
Let $T_0$ be a finite subtheory of $T$ containing $\mathsf{Q}$. 
Let $\varphi$ be a $\Sigma_1$ sentence satisfying $\PA \vdash \varphi \leftrightarrow \Phi(\gdl{\neg(\bigwedge T_0 \to \varphi)})$. 
By $\PC$ and formalized deduction theorem, we have $S \vdash \PR_{[T_0]}(\gdl{\varphi}) \to \Phi(\gdl{\bigwedge T_0 \to \varphi})$. 
By $\SC$ for $\PR_{[T_0]}(x)$, $S \vdash \varphi \to\Phi(\gdl{\bigwedge T_0 \to \varphi})$. 
Since $\PA \vdash \varphi \to \Phi(\gdl{\neg(\bigwedge T_0 \to \varphi)})$ by the choice of $\varphi$, we obtain $S \vdash \varphi \to \neg \Con^H$. 

If $T \vdash \Con^H$, then $T \vdash \neg \varphi$. 
Also $T \vdash \bigwedge T_0 \land \neg \varphi$, and this means $T \vdash \neg(\bigwedge T_0 \to \varphi)$. 
By $\D{1}$, $S \vdash \Phi(\gdl{\neg (\bigwedge T_0 \to \varphi)})$, and hence $S \vdash \varphi$. 
This is a contradiction. 
Therefore $T \nvdash \Con^H$. 
\end{proof}

\begin{rem}
The following makeshift condition $\SC^-$ is of course weaker than $\SC$ if $\bigwedge \emptyset \to \varphi$ is identical to $\varphi$.  
\begin{description}
	\item [$\SC^-$] There exists a finite subtheory $T_0$ of $T$ such that for any $\Sigma_1$ sentence $\varphi$, $S \vdash \varphi \to \Phi(\gdl{\bigwedge T_0 \to \varphi})$. 
\end{description}
Our proof of Proposition \ref{LP1}.6 $(\Rightarrow)$ actually shows two implications ``$\PC \Rightarrow \SC^-$'' and ``$\{\BD{2}, \SC^-\} \Rightarrow \SC$''. 
Also our proof of Theorem \ref{G2-3} essentially shows that if $\Phi(x)$ is a $\Sigma_1$ formula satisfying $\D{1}$ and $\SC^-$, then $T \nvdash \Con^H$. 
Then Theorem \ref{Jer} in the case $\Gamma = \Sigma_1$ and Theorem \ref{G2-3} directly follow from these observations. 
\end{rem}

In this section, we have seen that $\{\D{1}, \D{2}, \D{3}\}$ is sufficient for $T \nvdash \Con^L$ (Theorem \ref{G2}), and $\{\D{1}, \BD{2}, \D{3}\}$ is sufficient for $T \nvdash \Con^H$ (Theorem \ref{G2-2}). 
Also for $\Sigma_1$ formulas $\Phi(x)$, each of $\{\D{1}, \SC\}$ and $\{\D{1}, \PC\}$ is sufficient for $T \nvdash \Con^H$ (Theorems \ref{Jer} and \ref{G2-3}). 
From examples of formulas given in Section \ref{Sec:W}, the following non-implications are obtained. 
These non-implications show that these unprovability results are optimal. 
For example, the third clause in the following list means that there exists a $\Sigma_1$ formula $\Phi(x)$ satisfying both $\D{1}$ and $\D{2}$ such that $T \vdash \Con^H$. 

\begin{itemize}
	\item $\{\D{1}, \D{2}, \SC\} \not \Rightarrow T \nvdash \Con^H$ (Fact \ref{exFef}). 
	\item $\{\Phi \in \Sigma_1, \D{2}, \D{3}, \SC, \PC\} \not \Rightarrow T \nvdash \Con^H$ (Proposition \ref{exQ}). 
	\item $\{\Phi \in \Sigma_1, \D{1}, \D{2}\} \not \Rightarrow T \nvdash \Con^H$ (Fact \ref{exAra}.1). 
	\item $\{\Phi \in \Sigma_1, \D{1}, \D{3}\} \not \Rightarrow T \nvdash \Con^H$ (Fact \ref{exAra}.2). 
	\item $\{\Phi \in \Sigma_1, \D{1}, \BD{2}, \D{3}\} \not \Rightarrow T \nvdash \Con^L$ (Fact \ref{exKur}.3). 
	\item $\{\Phi \in \Sigma_1, \D{1}, \SC, \PC\} \not \Rightarrow T \nvdash \Con^L$ (Proposition \ref{exMos}). 
	\item $\{\Phi \in \Sigma_1, \D{1}, \D{2}, \SC\} \not \Rightarrow T \nvdash \Con^{\Sigma_1}$ (Proposition \ref{WP2}). 
\end{itemize}

These non-implications show that none of $\{\D{1}, \BD{2}, \D{3}\}$, $\{\D{1}, \SC\}$ and $\{\D{1}, \PC\}$ implies $\{\D{1}, \D{2}, \D{3}\}$. 
Moreover we obtain the following non-implications. 
\begin{itemize}
	\item $\{\Phi \in \Sigma_1, \D{1}, \D{2}, \D{3}\} \not \Rightarrow \SC$ (Proposition \ref{WP4}). 
	By Proposition \ref{LP1}.6, this is equivalent to $\{\Phi \in \Sigma_1, \D{1}, \D{2}, \D{3}\} \not \Rightarrow \PC$. 
	\item $\{\Phi \in \Sigma_1, \D{1}, \SC, \PC\} \not \Rightarrow \BD{2}$ (Proposition \ref{exMos}). 
	\item $\{\Phi \in \Sigma_1, \D{1}, \SC\} \not \Rightarrow \PC$ (Proposition \ref{WP5}). 
	\item $\{\Phi \in \Sigma_1, \D{1}, \PC\} \not \Rightarrow \SC$ (Proposition \ref{WP6}). 
\end{itemize}
Consequently, $\{\D{1}, \BD{2}, \D{3}\}$, $\{\D{1}, \SC\}$ and $\{\D{1}, \PC\}$ are pairwise incomparable. 
Also $\{\D{1}, \D{2}, \D{3}\}$ is incomparable with each of $\{\D{1}, \SC\}$ and $\{\D{1}, \PC\}$.

\subsection{Uniform derivability conditions}

In this subsection, we introduce and investigate uniform derivability conditions. 
Let $\varphi(\vec{x})$ be an abbreviation for $\varphi(x_0, \ldots, x_{k})$ for some $k$. 

\begin{defn}[Uniform derivability conditions]\leavevmode
\begin{description}
	\item [$\DU{1}$] If $T \vdash \forall \vec{x}\,\varphi(\vec{x})$, then $S \vdash \forall \vec{x}\, \Phi(\gdl{\varphi(\vec{\dot{x}})})$ for any formula $\varphi(\vec{x})$. 

	\item [$\DU{2}$] $S \vdash \forall \vec{x}\, (\Phi(\gdl{\varphi(\vec{\dot{x}}) \to \psi(\vec{\dot{x}})}) \to (\Phi(\gdl{\varphi(\vec{\dot{x}})}) \to \Phi(\gdl{\psi(\vec{\dot{x}})})))$ for any formulas $\varphi(\vec{x})$ and $\psi(\vec{x})$. 

	\item [$\DU{3}$] $S \vdash \forall \vec{x}\, (\Phi(\gdl{\varphi(\vec{\dot{x}})}) \to \Phi(\gdl{ \Phi(\gdl{\varphi(\vec{\dot{x}})})}))$ for any formula $\varphi(\vec{x})$. 

	\item [$\GCU$] $S \vdash \forall \vec{x}\, (\varphi(\vec{x}) \to \Phi(\gdl{\varphi(\vec{\dot{x}})}))$ for any $\Gamma$ formula $\varphi(\vec{x})$. 

	\item [$\BDU{m}$ ($m \geq 1$)] If $\displaystyle T \vdash \forall \vec{x}\left(\bigwedge_{0 < i < m} \varphi_i(\vec{x}) \to \varphi_m(\vec{x}) \right)$, \\
	then $\displaystyle S \vdash \forall \vec{x} \left( \bigwedge_{0 < i < m}\Phi(\gdl{\varphi_i(\vec{\dot{x}})}) \to \Phi(\gdl{\varphi_m(\vec{\dot{x}})}) \right)$\\
for any formulas $\varphi_1(\vec{x}), \ldots, \varphi_m(\vec{x})$. 

	\item [$\CB$] $S \vdash \Phi(\gdl{\forall \vec{x}\, \varphi(\vec{x})}) \to \forall \vec{x}\, \Phi(\gdl{\varphi(\vec{\dot{x}})})$ for any formula $\varphi(\vec{x})$. 

	\item [$\PCU$] $S \vdash \forall \vec{x} (\PRL(\gdl{\varphi(\vec{\dot{x}})}) \to \Phi(\gdl{\varphi(\vec{\dot{x}})}))$ for any formula $\varphi(\vec{x})$. 
\end{description}
\end{defn}

Usual proofs of the Hilbert--Bernays--L\"ob derivability conditions $\D{1}$, $\D{2}$ and $\D{3}$ (in books such as \cite{Boo93}) are demonstrated by showing stronger uniform derivability conditions $\DU{1}$, $\DU{2}$ and $\SCU$. 
Notice that the natural provability predicates $\PR_T(x)$ satisfy full uniform derivability conditions. 

As in the local version, the conditions $\BDU{m}$ ($m \geq 1$) were introduced by Buchholz \cite{Buc93}, and $\BDU{1}$ is precisely $\DU{1}$. 
The condition $\CB$ claims that sentences corresponding to the Converse Barcan Formula investigated in predicate modal logic (see \cite{HC}) are provable. 
Notice that the condition $\HB{2}$ described in the introduction seems to be a variant of the condition $\CB$. 
It is easy to see that each of uniform derivability conditions is stronger than the corresponding local version. 
Moreover, uniform derivability conditions are strictly stronger than local derivability conditions (see Proposition \ref{WP1} in Section \ref{Sec:W}). 

As in the local version, we obtain the following proposition. 

\begin{prop}\label{UP1}\leavevmode
\begin{enumerate}
	\item $\DC$ and $\BDU{m}$ for some $m \geq 1 \Rightarrow \DU{1}$. 

	\item $\BDU{3} \Rightarrow \DU{2}$. 

	\item The following are equivalent: 
	\begin{enumerate}
		\item $\DU{1}$ and $\DU{2}$. 
		\item $\BDU{m}$ for all $m \geq 1$. 
		\item $\DU{1}$ and $\BDU{m}$ for some $m \geq 3$. 
	\end{enumerate}

	\item If $\Phi(x)$ is a $\Gamma$ formula, then $\GCU \Rightarrow \DU{3}$. 

	\item $\BDU{2}$ and $\PCU \iff \BDU{2}$ and $\SCU$. 
	
	\item $\BDU{2}$ and $\PCU \Rightarrow \DU{1}$.  

	\item $\DU{1}$, $\DU{2}$ and $\PCU \iff \DU{1}$, $\DU{2}$ and $\SCU$. 

\end{enumerate}
\end{prop}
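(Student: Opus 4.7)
The plan is to lift each clause of Proposition \ref{LP1} to its uniform counterpart by carrying a single outer $\forall \vec{x}$ through essentially the same arguments. For clauses 1--4 I would proceed almost verbatim: in clause 1, pad the hypothesis with $m - 1$ conjuncts $0 = 0$, apply $\BDU{m}$ under $\forall \vec{x}$ to obtain $S \vdash \forall \vec{x}(\Phi(\gdl{0=0}) \to \Phi(\gdl{\varphi(\vec{\dot{x}})}))$, and discharge $\Phi(\gdl{0=0})$ using $\DC$ applied to the $\Delta_0$ sentence $0=0$; for clause 2, invoke $\BDU{3}$ on the tautology $\forall \vec{x}((\varphi(\vec{x}) \to \psi(\vec{x})) \land \varphi(\vec{x}) \to \psi(\vec{x}))$; clause 3 is a routine induction on $m$ for (a)$\Rightarrow$(b), with (b)$\Rightarrow$(c) trivial and (c)$\Rightarrow$(a) obtained by padding with $0=0$'s to reduce $\BDU{m}$ to $\BDU{3}$ and then applying clause 2; for clause 4, observe that when $\Phi(x)$ is $\Gamma$ the formula $\Phi(\gdl{\varphi(\vec{\dot{x}})})$ of $\vec{x}$ is again $\Gamma$, so $\GCU$ applied to it yields $\DU{3}$.

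For clause 5, I would follow the local argument under a universal quantifier. For $(\Rightarrow)$, given a $\Sigma_1$ formula $\varphi(\vec{x})$, fix a finite subtheory $T_0$ of $T$ containing $\mathsf{Q}$. Applying $\PCU$ to $\bigwedge T_0 \to \varphi(\vec{x})$ gives $S \vdash \forall \vec{x}(\PRL(\gdl{\bigwedge T_0 \to \varphi(\vec{\dot{x}})}) \to \Phi(\gdl{\bigwedge T_0 \to \varphi(\vec{\dot{x}})}))$. Specializing Lemma \ref{FDT} to the term $\gdl{\varphi(\vec{\dot{x}})}$ rewrites the antecedent as $\PR_{[T_0]}(\gdl{\varphi(\vec{\dot{x}})})$. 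Next, $\BDU{2}$ applied to $\forall \vec{x}((\bigwedge T_0 \to \varphi(\vec{x})) \to \varphi(\vec{x}))$ collapses the consequent from $\Phi(\gdl{\bigwedge T_0 \to \varphi(\vec{\dot{x}})})$ to $\Phi(\gdl{\varphi(\vec{\dot{x}})})$, and uniform provable $\Sigma_1$-completeness for $\PR_{[T_0]}$ (which holds because $T_0 \supseteq \mathsf{Q}$) provides $S \vdash \forall \vec{x}(\varphi(\vec{x}) \to \PR_{[T_0]}(\gdl{\varphi(\vec{\dot{x}})}))$; concatenating the three implications yields $\SCU$. For $(\Leftarrow)$, given any $\varphi(\vec{x})$, apply $\SCU$ to the $\Sigma_1$ formula $\PRL(\gdl{\varphi(\vec{\dot{x}})})$ of $\vec{x}$, then use uniform reflection $T \vdash \forall \vec{x}(\PRL(\gdl{\varphi(\vec{\dot{x}})}) \to \varphi(\vec{x}))$ together with $\BDU{2}$ to collapse the outer $\Phi$, producing $\PCU$.

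Clauses 6 and 7 then follow formally: $\BDU{2}$ with $\PCU$ yields $\SCU$ by clause 5, hence $\DC$ by restricting to $\Delta_0$ sentences, hence $\DU{1}$ by clause 1; and clause 7 is immediate from clauses 3 and 5, since $\DU{1}$ and $\DU{2}$ together entail $\BDU{2}$. The step I expect to require the most care is clause 5: one must verify that Lemma \ref{FDT} and the reflexiveness of $\PA$ are available in their schematic uniform forms, i.e., as $\PA$-theorems with free numerical parameters rather than merely as closed instances. The former comes for free because Lemma \ref{FDT} is already stated with $\forall x$, so the substitution $x := \gdl{\varphi(\vec{\dot{x}})}$ is immediate; the latter is a standard consequence of local reflection combined with a partial truth predicate for a level above the complexity of $\varphi$. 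Once these schematic invocations are secured, the outer $\forall \vec{x}$ commutes past every step and the local arguments transfer uniformly.
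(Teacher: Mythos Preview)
Your proposal is correct and follows precisely the approach the paper intends: the paper gives no explicit proof of Proposition~\ref{UP1} but merely says ``As in the local version, we obtain the following proposition,'' and you have faithfully spelled out how each clause of Proposition~\ref{LP1} lifts to its uniform counterpart. Your identification of the two points requiring care in clause~5---the uniform availability of Lemma~\ref{FDT} and of reflection for predicate calculus over $\PA$---is apt, and your justifications for both are sound.
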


The condition $\CB$ is related to other conditions. 

\begin{prop}\label{UP2}\leavevmode
\begin{enumerate}
	\item $\D{1}$ and $\CB \Rightarrow \DU{1}$.
	\item $\BDU{2} \Rightarrow \CB$. 
	\item $\DU{2}$ and $\PCU \Rightarrow \CB$. 
	\item The following are equivalent: 
	\begin{enumerate}
		\item $\DU{1}$ and $\DU{2}$. 
		\item $\D{1}$, $\BDU{2}$ and $\DU{2}$. 
		\item $\D{1}$, $\CB$ and $\DU{2}$. 
	\end{enumerate}
\end{enumerate}
\end{prop}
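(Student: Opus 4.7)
My approach is to handle the four clauses in order, each time reducing to the obvious formal combination.

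\textbf{Clause 1.} Suppose $T \vdash \forall \vec{x}\,\varphi(\vec{x})$. By $\D{1}$, $S \vdash \Phi(\gdl{\forall \vec{x}\,\varphi(\vec{x})})$, and then a single application of $\CB$ yields $S \vdash \forall \vec{x}\, \Phi(\gdl{\varphi(\vec{\dot{x}})})$, which is $\DU{1}$.

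\textbf{Clause 2.} Given an arbitrary formula $\varphi(\vec{x})$, set $\varphi_1(\vec{x}) \equiv \forall \vec{y}\,\varphi(\vec{y})$ (a sentence, with $\vec{x}$ a dummy vector of variables) and $\varphi_2(\vec{x}) \equiv \varphi(\vec{x})$. Predicate calculus gives $T \vdash \forall \vec{x}(\varphi_1(\vec{x}) \to \varphi_2(\vec{x}))$, so $\BDU{2}$ produces $S \vdash \forall \vec{x}(\Phi(\gdl{\varphi_1(\vec{\dot{x}})}) \to \Phi(\gdl{\varphi_2(\vec{\dot{x}})}))$. Because $\varphi_1$ has no genuine dependence on $\vec{x}$, $\PA$ proves $\gdl{\varphi_1(\vec{\dot{x}})} = \gdl{\forall \vec{y}\,\varphi(\vec{y})}$, so this rearranges to the instance of $\CB$ for $\varphi(\vec{x})$.

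\textbf{Clause 3.} This is the main obstacle, because $\PCU$ lets us import predicate calculus only uniformly, and we must get the instantiation axiom into $\Phi$. Let $\chi(\vec{x}) \equiv \forall \vec{y}\,\varphi(\vec{y}) \to \varphi(\vec{x})$. Since $\chi(\vec{x})$ is derivable in pure predicate calculus as a single formula, standard properties of $\PRL$ yield $\PA \vdash \forall \vec{x}\, \PRL(\gdl{\chi(\vec{\dot{x}})})$. Applying $\PCU$ to $\chi(\vec{x})$ gives $S \vdash \forall \vec{x}(\PRL(\gdl{\chi(\vec{\dot{x}})}) \to \Phi(\gdl{\chi(\vec{\dot{x}})}))$, hence $S \vdash \forall \vec{x}\, \Phi(\gdl{\forall \vec{y}\,\varphi(\vec{y}) \to \varphi(\vec{\dot{x}})})$. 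Then $\DU{2}$ (instantiated with antecedent $\forall \vec{y}\,\varphi(\vec{y})$, a sentence, and consequent $\varphi(\vec{x})$) splits the conditional inside $\Phi$, and we obtain $S \vdash \forall \vec{x}(\Phi(\gdl{\forall \vec{y}\,\varphi(\vec{y})}) \to \Phi(\gdl{\varphi(\vec{\dot{x}})}))$, which is $\CB$. The slightly delicate point here is that the formalized substitution turns the sentence $\forall \vec{y}\,\varphi(\vec{y})$ inside the antecedent into itself (independent of $\vec{x}$), and this is exactly the kind of fact about $\gdl{\cdot}$ that $\PA$ verifies.

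\textbf{Clause 4.} I prove $(a) \Rightarrow (b) \Rightarrow (c) \Rightarrow (a)$. For $(a) \Rightarrow (b)$, $\D{1}$ is a special case of $\DU{1}$; for $\BDU{2}$, assume $T \vdash \forall \vec{x}(\varphi(\vec{x}) \to \psi(\vec{x}))$, apply $\DU{1}$ to obtain $S \vdash \forall \vec{x}\,\Phi(\gdl{\varphi(\vec{\dot{x}}) \to \psi(\vec{\dot{x}})})$, and then apply $\DU{2}$ to break the implication. The implication $(b) \Rightarrow (c)$ is immediate from clause 2 of the present proposition. Finally $(c) \Rightarrow (a)$ follows from clause 1, which upgrades $\D{1}$ and $\CB$ to $\DU{1}$, while $\DU{2}$ is already part of $(c)$.
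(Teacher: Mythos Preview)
Your proof is correct and follows essentially the same approach as the paper: each clause is argued in the same way (apply $\D{1}$ then $\CB$; use the instantiation axiom $\forall \vec{x}\,\varphi(\vec{x}) \to \varphi(\vec{x})$ inside $\BDU{2}$; push that same axiom through $\PCU$ via $\DU{1}$ for $\PRL$ and then split with $\DU{2}$; and chain $(a)\Rightarrow(b)\Rightarrow(c)\Rightarrow(a)$ using the earlier clauses). The only cosmetic difference is that for $(a)\Rightarrow(b)$ the paper cites the equivalence in Proposition~\ref{UP1}.3, whereas you spell out the $\DU{1}+\DU{2}\Rightarrow\BDU{2}$ step directly.
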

\begin{proof}
1. Suppose that $\Phi(x)$ satisfies $\D{1}$ and $\CB$. 
Assume $T \vdash \forall \vec{x}\, \varphi(\vec{x})$. 
Then $S \vdash \Phi(\gdl{\forall \vec{x}\, \varphi(\vec{x})})$ by $\D{1}$. 
Since $S \vdash \Phi(\gdl{\forall \vec{x}\, \varphi(\vec{x})}) \to \forall \vec{x}\, \Phi(\gdl{\varphi(\vec{\dot{x}})})$ by $\CB$, we have $S \vdash \forall \vec{x}\, \Phi(\gdl{\varphi(\vec{\dot{x}})})$. 

2. Suppose that $\Phi(x)$ satisfies $\BDU{2}$. 
Since $T \vdash \forall \vec{x}\, \varphi(\vec{x}) \to \varphi(\vec{x})$, we have $S \vdash \Phi(\gdl{\forall \vec{x}\, \varphi(\vec{x})}) \to \Phi(\gdl{\varphi(\vec{\dot{x}})})$ by $\BDU{2}$. 
Therefore $S \vdash \Phi(\gdl{\forall \vec{x}\, \varphi(\vec{x})}) \to \forall \vec{x}\, \Phi(\gdl{\varphi(\vec{\dot{x}})})$. 

3. Suppose $\Phi(x)$ satisfies $\DU{2}$ and $\PCU$. 
Let $\varphi(\vec{x})$ be any formula. 
Since $\forall \vec{x} \varphi(\vec{x}) \to \varphi(\vec{x})$ is provable in predicate calculus, $S \vdash \PRL(\gdl{\forall \vec{x} \varphi(\vec{x}) \to \varphi(\vec{\dot{x}})})$ by $\DU{1}$ for $\PRL(x)$. 
From $\PCU$, $S \vdash \Phi(\gdl{\forall \vec{x} \varphi(\vec{x}) \to \varphi(\vec{\dot{x}})})$. 
Then by $\DU{2}$, $S \vdash \Phi(\gdl{\forall \vec{x} \varphi(\vec{x})}) \to \Phi(\gdl{\varphi(\vec{\dot{x}})})$. 
Thus $S \vdash \Phi(\gdl{\forall \vec{x}\, \varphi(\vec{x})}) \to \forall \vec{x}\, \Phi(\gdl{\varphi(\vec{\dot{x}})})$. 

4. The implications $(a) \Rightarrow (b)$, $(b) \Rightarrow (c)$ and $(c) \Rightarrow (a)$ follow from Proposition \ref{UP1}.3, clause 2 and clause 1, respectively. 
\end{proof}

The following corollary immediately follows from clauses 1, 2 and 3 of Proposition \ref{UP2}. 

\begin{cor}\label{UC1}\leavevmode
\begin{enumerate}
	\item $\D{1}$ and $\BDU{2} \Rightarrow \DU{1}$. 
	\item $\D{1}$, $\DU{2}$ and $\PCU \Rightarrow \DU{1}$. 
\end{enumerate}
\end{cor}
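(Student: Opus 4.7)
The plan is to observe that both clauses of Corollary \ref{UC1} are immediate by chaining the three clauses of Proposition \ref{UP2} already established, so the proof will simply record the two compositions rather than introduce anything new. The common pivot is the condition $\CB$: Proposition \ref{UP2}.1 converts $\CB$ (together with $\D{1}$) into $\DU{1}$, while Propositions \ref{UP2}.2 and \ref{UP2}.3 provide the two ways to produce $\CB$ from the hypotheses of the respective clauses.

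For clause 1, I would start with a formula $\varphi(\vec{x})$ and the hypothesis $\BDU{2}$. Invoking Proposition \ref{UP2}.2 supplies $\CB$, i.e.\ $S \vdash \Phi(\gdl{\forall \vec{x}\,\varphi(\vec{x})}) \to \forall \vec{x}\, \Phi(\gdl{\varphi(\vec{\dot{x}})})$, and then Proposition \ref{UP2}.1 applied to $\D{1}$ and $\CB$ yields $\DU{1}$.

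For clause 2, the same skeleton works: $\DU{2}$ together with $\PCU$ yields $\CB$ by Proposition \ref{UP2}.3, and once $\CB$ is in hand Proposition \ref{UP2}.1 combines it with $\D{1}$ to give $\DU{1}$.

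Since both arguments are a single composition of previously proved implications and no new formalization work is required, there is essentially no obstacle; the only care needed is to cite the correct sub-clauses of Proposition \ref{UP2} in the right order. A one-sentence proof of the form ``Clause 1 follows from Proposition \ref{UP2}.2 and Proposition \ref{UP2}.1; clause 2 follows from Proposition \ref{UP2}.3 and Proposition \ref{UP2}.1'' is all that needs to appear in the text.
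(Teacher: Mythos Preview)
Your proposal is correct and matches the paper's own proof, which simply states that the corollary ``immediately follows from clauses 1, 2 and 3 of Proposition \ref{UP2}.'' Your identification of $\CB$ as the pivot and the precise chaining (UP2.2 then UP2.1 for clause 1; UP2.3 then UP2.1 for clause 2) is exactly right.
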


Hilbert and Bernays \cite{HB39} proved that if a $\Sigma_1$ formula $\Phi(x)$ satisfies the conditions $\HB{1}$, $\HB{2}$ and $\HB{3}$ described in the introduction, then $T \nvdash \Con^H$. 
In our framework, the Hilbert--Bernays derivability conditions can be replaced by the conditions $\BD{2}$, $\CB$ and $\DCU$ without any substantial change. 
Then we obtain the following version of the second incompleteness theorem. 

\begin{thm}[Hilbert and Bernays \cite{HB39}]\label{UHB}
If $\Phi(x)$ is a $\Sigma_1$ formula satisfying $\BD{2}$, $\CB$ and $\DCU$, then $T \nvdash \Con^H$. 
\end{thm}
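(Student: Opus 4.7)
I would run a Gödel-type diagonal argument along the lines of Hilbert and Bernays' original proof. Using the Fixed Point Lemma, pick $\varphi$ with $\PA \vdash \varphi \leftrightarrow \neg \Phi(\gdl{\varphi})$. Since $\DCU$ trivially implies $\DC$, Proposition~\ref{LP1}.2 (applied with $m=2$) yields $\D{1}$ from $\DC$ and $\BD{2}$, and Proposition~\ref{LP3} then gives $T \nvdash \varphi$. Hence it suffices to prove $S \vdash \Con^H \to \varphi$, i.e. $S \vdash \Phi(\gdl{\varphi}) \to \neg \Con^H$. Writing $\Phi(x) \equiv \exists y\, \phi(x, y)$ with $\phi \in \Delta_0$, the sentence $\Phi(\gdl{\varphi})$ unfolds to the $\Sigma_1$ sentence $\exists y\, \phi(\gdl{\varphi}, y)$.

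The crux is the specific instance of provable $\Sigma_1$-completeness
\[
S \vdash \Phi(\gdl{\varphi}) \to \Phi(\gdl{\Phi(\gdl{\varphi})}).
\]
Granted this, applying $\BD{2}$ to the fixed-point consequence $T \vdash \Phi(\gdl{\varphi}) \to \neg \varphi$ gives $S \vdash \Phi(\gdl{\Phi(\gdl{\varphi})}) \to \Phi(\gdl{\neg \varphi})$, so that $S \vdash \Phi(\gdl{\varphi}) \to \Phi(\gdl{\neg \varphi})$. Taking $x = \gdl{\varphi}$ then witnesses $\Fml(x) \land \Phi(x) \land \Phi(\dot{\neg} x)$, i.e. $\neg \Con^H$, as desired.

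To establish the key implication I would deploy all three conditions in concert. On the one hand, apply $\DCU$ to the $\Delta_0$ formula $\phi(\gdl{\varphi}, y)$ in the free variable $y$ to obtain $S \vdash \forall y(\phi(\gdl{\varphi}, y) \to \Phi(\gdl{\phi(\gdl{\varphi}, \dot{y})}))$. On the other hand, lift the $T$-tautology $\forall y(\phi(\gdl{\varphi}, y) \to \Phi(\gdl{\varphi}))$ under $\Phi$ via $\D{1}$ and then apply $\CB$ to push the quantifier inside $\Phi$, yielding $S \vdash \forall y\, \Phi(\gdl{\phi(\gdl{\varphi}, \dot{y}) \to \Phi(\gdl{\varphi})})$. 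Working inside $S$ and fixing a witness $y_0$ for $\Phi(\gdl{\varphi})$, the aim is to combine these two uniform statements together with a local application of $\BD{2}$ to the closed implication $\phi(\gdl{\varphi}, \overline{y_0}) \to \Phi(\gdl{\varphi})$ in order to extract $\Phi(\gdl{\Phi(\gdl{\varphi})})$.

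\textbf{Main obstacle.} The delicate point is that $\BD{2}$ alone does not internalize modus ponens uniformly under $\Phi$---this is precisely the office of $\D{2}$ (equivalently $\BDU{2}$), and neither is among our hypotheses. The role of $\CB$, the analogue here of Hilbert and Bernays' condition $\HB{2}$, is exactly to carry a universal quantifier under $\Phi$ so that the uniform consequence of $\DCU$ can be matched with a local instance of $\BD{2}$ applied to a witnessed closed implication, simulating the missing uniform modus-ponens step just well enough to retrieve $\Phi(\gdl{\Phi(\gdl{\varphi})})$ inside $S$. Organising this interplay so that the combination actually closes---rather than merely producing $\Phi$ of the implication alongside $\Phi$ of the antecedent---is the technical heart of the argument.
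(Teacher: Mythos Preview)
Your approach has a genuine gap at exactly the point you flag as the ``main obstacle,'' and the hypotheses do not suffice to close it. After obtaining $S \vdash \phi(\gdl{\varphi}, y) \to \Phi(\gdl{\phi(\gdl{\varphi}, \dot{y})})$ from $\DCU$ and $S \vdash \forall y\, \Phi(\gdl{\phi(\gdl{\varphi}, \dot{y}) \to \Phi(\gdl{\varphi})})$ from $\D{1}+\CB$, you need a modus-ponens step \emph{uniformly in the internal variable $y$}: from $\Phi(\gdl{A(\dot{y})})$ and $\Phi(\gdl{A(\dot{y}) \to B})$ conclude $\Phi(\gdl{B})$. That is precisely an instance of $\DU{2}$, not $\BD{2}$. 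The condition $\BD{2}$ is an external schema: for each \emph{fixed} formula $\alpha$, from $T \vdash \alpha \to \beta$ infer $S \vdash \Phi(\gdl{\alpha}) \to \Phi(\gdl{\beta})$, where $\gdl{\alpha}$ is a single numeral. You cannot instantiate it at a formula whose G\"odel number depends on an $S$-internal witness $y_0$; ``applying $\BD{2}$ to the closed implication $\phi(\gdl{\varphi}, \overline{y_0}) \to \Phi(\gdl{\varphi})$'' is not a legitimate move inside $S$, since $y_0$ may be nonstandard. And $\CB$ does not rescue this: it only pushes an outer $\forall$ through $\Phi$; it supplies no form of internal modus ponens.

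The paper's proof sidesteps this obstruction by a different decomposition. Rather than attempting to establish the $\D{3}$-instance for $\varphi$, it takes the fixed point $\varphi$ to be $\Pi_1$ (possible since $\neg\Phi(x)$ is $\Pi_1$), writes $\varphi \leftrightarrow \forall x\,\delta(x)$ with $\delta \in \Delta_0$, and argues directly: from $\neg\varphi$ (equivalently $\Phi(\gdl{\varphi})$) one obtains on the one hand, via $\BD{2}$ then $\CB$, $\forall x\,\Phi(\gdl{\delta(\dot{x})})$; and on the other hand, via $\DCU$ applied to $\neg\delta$, $\exists x\,\Phi(\gdl{\neg\delta(\dot{x})})$. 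Together these exhibit some $x$ with $\Phi(\gdl{\delta(\dot{x})}) \land \Phi(\gdl{\neg\delta(\dot{x})})$, which is $\neg\Con^H$ directly---no internal modus ponens under $\Phi$ is ever needed. The key move you are missing is to apply $\DCU$ and $\CB$ to the $\Delta_0$ matrix of the $\Pi_1$ fixed point $\varphi$ itself, rather than to the $\Sigma_1$ sentence $\Phi(\gdl{\varphi})$.
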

\begin{proof}
Suppose that $\Phi(x)$ is $\Sigma_1$ and satisfies $\BD{2}$, $\CB$ and $\DCU$. 
Let $\varphi$ be a $\Pi_1$ sentence satisfying $\PA \vdash \varphi \leftrightarrow \neg \Phi(\gdl{\varphi})$. 
Let $\delta(x)$ be a $\Delta_0$ formula with $\PA \vdash \varphi \leftrightarrow \forall x \delta(x)$. 
Then by $\BD{2}$, $S \vdash \Phi(\gdl{\varphi}) \to \Phi(\gdl{\forall x \delta(x)})$. 
By $\CB$, we obtain
\begin{align}\label{eq2}
	S \vdash \neg \varphi \to \forall x \Phi(\gdl{\delta(\dot{x})}). 
\end{align}

On the other hand, $S \vdash \neg \delta(x) \to \Phi(\gdl{\neg \delta(\dot{x})})$ by $\DCU$.  
Then $S \vdash \exists x \neg \delta(x) \to \exists x \Phi(\gdl{\neg \delta(\dot{x})})$. 
Hence $S \vdash \neg \varphi \to \exists x \Phi(\gdl{\neg \delta(\dot{x})})$. 
By combining this with (\ref{eq2}), we obtain 
\[
	S \vdash \neg \varphi \to \exists x (\Phi(\gdl{\delta(\dot{x})}) \land \Phi(\gdl{\neg \delta(\dot{x})})).
\]
It follows $S \vdash \neg \varphi \to \exists x (\Fml(x) \land \Phi(x) \land \Phi(\dot{\neg} x))$, and hence $S \vdash \Con^H \to \varphi$. 
By Proposition \ref{LP1}.2, $\Phi(x)$ satisfies $\D{1}$. 
Then by Proposition \ref{LP3}, $T \nvdash \varphi$. 
Therefore we conclude $T \nvdash \Con^H$. 
\end{proof}

Theorem \ref{UHB} is optimal in the sense of the following non-implications from Section \ref{Sec:W}. 
\begin{itemize}
	\item $\{\D{1}, \BD{2}, \CB, \DCU\} \not \Rightarrow T \nvdash \Con^H$ (Fact \ref{exFef}). 
	\item $\{\Phi \in \Sigma_1, \CB, \DCU\} \not \Rightarrow T \nvdash \Con^H$ (Proposition \ref{exQ}). 
	\item $\{\Phi \in \Sigma_1, \BD{2}, \CB\} \not \Rightarrow T \nvdash \Con^H$ (Proposition \ref{exN}). 
	\item $\{\Phi \in \Sigma_1, \D{1}, \BD{2}, \DCU\} \not \Rightarrow T \nvdash \Con^H$ (Fact \ref{exKur}.1). 
	\item $\{\Phi \in \Sigma_1, \D{1}, \BD{2}, \CB, \DCU\} \not \Rightarrow T \nvdash \Con^L$ (Fact \ref{exKur}.2). 
\end{itemize}
Notice that $\{\BD{2}, \CB, \DCU\}$ is equivalent to $\{\D{1}, \BD{2}, \CB, \DCU\}$ by Proposition \ref{LP1}.2. 
For the latter condition, we do not know if $\{\Phi \in \Sigma_1, \D{1}, \BD{2}, \CB, \DCU\}$ is optimal to conclude $T \nvdash \Con^H$ or not. 

\begin{prob}\leavevmode
\begin{enumerate}
	\item Is there a $\Sigma_1$ provability predicate satisfying $\D{1}$, $\CB$ and $\DCU$ such that $T \vdash \Con^H$?
	\item Is there a $\Sigma_1$ provability predicate satisfying $\D{1}$, $\BD{2}$ and $\CB$ such that $T \vdash \Con^H$?
\end{enumerate}
\end{prob}

The following two non-implications from Section \ref{Sec:W} indicate that $\{\BD{2}, \CB, \DCU\}$ is incomparable with each of $\{\D{1}, \D{2}, \D{3}\}$, $\{\D{1}, \BD{2}, \D{3}\}$, $\{\D{1}, \SC\}$ and $\{\D{1}, \PC\}$.

\begin{itemize}
	\item $\{\Phi \in \Sigma_1, \BD{2}, \CB, \DCU\} \not \Rightarrow \D{3}$ (Fact \ref{exKur}.2). 
	\item $\{\Phi \in \Sigma_1, \D{1}, \D{2}, \SC\} \not \Rightarrow \CB$ (Proposition \ref{WP1}). 
\end{itemize}

Usual proof of $\SCU$ (in books such as \cite{Boo93}) proceeds by induction on the construction of $\Sigma_1$ formulas, and it requires much effort. 
In the lecture note \cite{Buc93} by Buchholz, an elegant schematic proof of $\SCU$ is presented. 
More precisely, it is proved that for a proof of $\SCU$, the assumption ``$\BDU{m}$ for all $m \geq 1$'' is sufficient. 
By Proposition \ref{UP1}.3, this assumption is equivalent to $\{\DU{1}, \DU{2}\}$. 
Hence Buchholz's work is stated as follows. 

\begin{thm}[Buchholz \cite{Buc93}]\label{UBuc}
$\DU{1}$ and $\DU{2} \Rightarrow \SCU$. 
\end{thm}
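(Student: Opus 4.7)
By Proposition \ref{UP1}.3, the hypotheses $\DU{1}$ and $\DU{2}$ are equivalent to having $\BDU{m}$ hold for every $m \geq 1$, and this uniform conjunction-based family is exactly what is needed to lift Hilbert-style propositional reasoning into the scope of $\Phi$ in a single schematic stroke. The plan is to verify $S \vdash \forall \vec x(\varphi(\vec x) \to \Phi(\gdl{\varphi(\vec{\dot x})}))$ by external induction on the build-up of the $\Sigma_1$ formula $\varphi(\vec x)$, treating $\Delta_0$ formulas as generated from atomic equalities by $\land$, $\lor$, and bounded quantification, and $\Sigma_1$ formulas as obtained by prefixing an existential quantifier.

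The main technical step is the base case for atomic equalities $t(\vec x) = s(\vec x)$. For each primitive recursive term $t(\vec x)$ there is a primitive recursive function that, given $\vec n$, outputs the Gödel number of the numeral $\overline{t(\vec n)}$; I would first establish that $S$ proves, uniformly in $\vec x$, $\Phi$-provability of the equation $t(\overline{\vec x}) = \overline{t(\vec x)}$. This is proved by external induction on the structure of $t$, using $\DU{1}$ on the uniformly $\PA$-provable schemas $\mathsf s(\overline n) = \overline{n+1}$, $\overline m + \overline n = \overline{m+n}$, and $\overline m \times \overline n = \overline{m n}$, and then using $\BDU{m}$ to chain the equalities through subterm composition. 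Atomic equality completeness $\forall \vec x(t(\vec x) = s(\vec x) \to \Phi(\gdl{t(\vec{\dot x}) = s(\vec{\dot x})}))$ then follows by $\BDU{3}$, since under the hypothesis the two computed numerals coincide, and the three $\Phi$-equalities can be glued together via the propositional tautology $t = N \land s = N \to t = s$.

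For the inductive steps, closure under $\land$ and $\lor$ is immediate from $\BDU{3}$ applied to the corresponding tautologies. Closure under the bounded universal quantifier $\forall y < t(\vec x)\, \delta(\vec x, y)$ is handled by an internal $\PA$-induction showing $\forall n\bigl(\forall y < n\, \delta(\vec x, y) \to \Phi(\gdl{\forall y < \dot n\, \delta(\vec{\dot x}, y)})\bigr)$: the base $n = 0$ comes from $\DU{1}$ applied to the vacuous universal, and the successor step combines $\Phi(\gdl{\forall y < \dot n\, \delta(\vec{\dot x}, y)})$ with $\Phi(\gdl{\delta(\vec{\dot x}, \dot n)})$ (from the induction hypothesis on $\delta$) via $\BDU{3}$. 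Closure under the unbounded existential is immediate from $\BDU{2}$ applied to $T \vdash \forall \vec x, y(\varphi(\vec x, y) \to \exists y\, \varphi(\vec x, y))$, combined with the induction hypothesis on $\varphi$.

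The main obstacle is the bookkeeping at the atomic level and in the bounded-quantifier case: both require an internal $\PA$-induction whose inductive step transfers a $T$-provable arithmetic identity into a $\Phi$-statement about the corresponding primitive recursive Gödel-number terms, with care needed to keep $\gdl{\varphi(\vec{\dot x})}$ aligned under substitution inside and outside of $\Phi$. The flexibility of having $\BDU{m}$ available for arbitrary $m$ is precisely what makes these multi-premise gluings go through uniformly, and this is the insight of Buchholz's schematic argument that the plan exploits.
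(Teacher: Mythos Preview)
Your proposal is correct and is precisely the Buchholz schematic proof the paper cites (and that Rautenberg reproduces). The paper does not supply its own proof of Theorem~\ref{UBuc}; it attributes the result to Buchholz and then proves the stronger Theorem~\ref{MT} in Section~\ref{Sec:MT}.

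It is worth noting how the paper's proof of that stronger result differs from your outline. Rather than inducting on the build-up of $\Sigma_1$ formulas through $\land$, $\lor$ and bounded quantifiers, the paper invokes the $\PA$-provable MRDP theorem to reduce an arbitrary $\Sigma_1$ formula to one of the shape $\exists \vec y\, \bigvee_i \bigwedge_j (z_{i,j} = t_{i,j}(\vec x, \vec y))$, and then runs the induction purely on the term structure of the $t_{i,j}$ (Lemmas~\ref{MTL3}--\ref{MTL5}). This bypasses the bounded-quantifier step entirely and, crucially, uses only $\BDU{2}$ rather than $\BDU{m}$ for arbitrary $m$: every gluing is a two-premise step, with Lemma~\ref{MTL1} transferring along $\PA$-equivalences. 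Your approach buys conceptual directness (no MRDP, no normal form) at the cost of needing the full $\BDU{m}$ family; the paper's approach pays the MRDP overhead to isolate exactly which fragment of Buchholz's hypothesis is doing the work.

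One small gap in your outline: your generation of $\Delta_0$ formulas from atomic equalities by $\land$, $\lor$ and bounded quantification omits negated atomics $t \neq s$. You need either a separate base case for these or the standard reduction $t \neq s \leftrightarrow \exists z(t + \mathsf{s}(z) = s \lor s + \mathsf{s}(z) = t)$, as the paper does in Lemma~\ref{MTL}.
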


In Rautenberg's book \cite{Rau10}, a schematic proof of $\SCU$ based on Buchholz's argument is presented. 
As a corollary to Theorem \ref{UBuc}, we obtain the following version of the second incompleteness theorem. 

\begin{cor}\label{UC2}
If $\Phi(x)$ is a $\Sigma_1$ formula satisfying $\DU{1}$ and $\DU{2}$, then $T \nvdash \Con^L$. 
\end{cor}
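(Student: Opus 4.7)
The plan is to read this corollary as a one-line assembly of Buchholz's Theorem \ref{UBuc} with L\"ob's Theorem \ref{G2}. From $\DU{1}$ and $\DU{2}$, Theorem \ref{UBuc} delivers $\SCU$; downgrading everything to its local version yields the Hilbert--Bernays--L\"ob conditions $\D{1}$, $\D{2}$, $\D{3}$; then Theorem \ref{G2} gives $T \nvdash \Con^L$.

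More carefully: first I would invoke Theorem \ref{UBuc} on the two hypotheses to obtain $\SCU$. As noted in the excerpt immediately before Proposition \ref{UP1}, each uniform derivability condition implies its local analogue by instantiation, so from $\DU{1}$, $\DU{2}$, and $\SCU$ I extract $\D{1}$, $\D{2}$, and $\SC$. Next I use the standing assumption that $\Phi(x)$ is $\Sigma_1$ to apply Proposition \ref{LP1}.5 in the case $\Gamma = \Sigma_1$, which yields $\D{3}$. Finally, L\"ob's Theorem \ref{G2} applied to $\D{1}$, $\D{2}$, and $\D{3}$ delivers $T \nvdash \Con^L$, as required.

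There is no real obstacle here: all of the genuine labor has already been expended in Theorem \ref{UBuc}. The only point worth a moment's attention is the $\SC$-to-$\D{3}$ step, namely that $\Phi(\gdl{\varphi})$ is itself a $\Sigma_1$ sentence whenever $\Phi(x)$ is $\Sigma_1$, so that $\SC$ instantiated at $\Phi(\gdl{\varphi})$ is exactly $\D{3}$; but this is precisely the content of Proposition \ref{LP1}.5.
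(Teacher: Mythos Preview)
Your proposal is correct and is precisely the intended argument: the paper states Corollary \ref{UC2} without proof, as an immediate consequence of Theorem \ref{UBuc}, and the assembly you describe (Theorem \ref{UBuc} $\Rightarrow \SCU$, downgrade to $\D{1}$, $\D{2}$, $\SC$, then Proposition \ref{LP1}.5 for $\D{3}$, then Theorem \ref{G2}) is exactly what is meant.
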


Notice that \{$\DU{1}, \DU{2}\}$ implies $\{\D{1}, \BDU{2}\}$ by Proposition \ref{UP1}.3. 
The following theorem improves Buchholz's Theorem \ref{UBuc} which will be proved in the next section. 

\begin{thm}\label{MT}
$\D{1}$ and $\BDU{2} \Rightarrow \SCU$. 
\end{thm}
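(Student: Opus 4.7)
The plan is to execute Buchholz's schematic proof of $\SCU$ (Theorem~\ref{UBuc}) with the modification that only the rule-form $\BDU{2}$ is ever invoked.  Corollary~\ref{UC1}.1 and Proposition~\ref{UP2}.2 already yield $\DU{1}$ and $\CB$ from the hypotheses $\D{1}$ and $\BDU{2}$, and these two derived conditions will substitute for $\DU{1}$ and $\DU{2}$ in Buchholz's original argument.

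The first stage reduces $\SCU$ for an arbitrary $\Sigma_1$-formula to $\SCU$ for a single primitive recursive atomic equation.  Any $\Sigma_1$-formula $\varphi(\vec x)$ is $\PA$-equivalent to $\exists y\,(f(\vec x, y) = 0)$ for some primitive recursive term $f$, obtained by coding the $\Delta_0$-matrix by its characteristic function.  Applying $\BDU{2}$ to both directions of this equivalence replaces $\SCU$ for $\varphi$ by $\SCU$ for $\exists y\,(f(\vec x, y) = 0)$; a further application of $\BDU{2}$ to the trivial $T$-theorem $\forall \vec x \forall y\,(f(\vec x, y) = 0 \to \exists y'\,(f(\vec x, y') = 0))$ then reduces that in turn to $\SCU$ for the atomic equation $f(\vec x, y) = 0$ itself.

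The second stage establishes
\[
S \vdash \forall \vec x\,\bigl( g(\vec x) = 0 \to \Phi(\gdl{g(\vec{\dot x}) = 0}) \bigr)
\]
for every primitive recursive term $g$, by induction on the build-up of $g$.  The base cases $g \equiv 0$ and $g$ a variable are direct consequences of $\D{1}$ applied to $0 = 0$ together with the $\PA$-provable evaluation of the numeral substitution under the hypothesis.  In the inductive step one feeds a $\mathsf{Q}$-provable uniform implication, whose conclusion is already in the atomic equational form $t(\vec x) = 0$, through $\BDU{2}$ exactly once, combined with an application of $\DU{1}$ to an explicit primitive recursive proof-term witnessing $\Sigma_1$-completeness of $\mathsf{Q}$ for the equation in question.

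The main obstacle is this inductive step.  Buchholz would combine $\Phi(\gdl{g_1(\vec{\dot x}) = 0})$ and $\Phi(\gdl{g_2(\vec{\dot x}) = 0})$ into $\Phi(\gdl{g_1(\vec{\dot x}) + g_2(\vec{\dot x}) = 0})$ via an intermediate $\Phi$-conjunction, and this move requires $\BDU{3}$; we must circumvent it by keeping every formula appearing under $\Phi$ in the atomic form $t(\vec x) = 0$ and handling all combinatorics outside $\Phi$, using $\D{1}$ on explicit primitive recursive proof terms.  Making this maneuver uniform across every constructor case of the induction, so that at no point is a compound $\Delta_0$-connective reconstructed under $\Phi$, is the key technical point of the refinement over Buchholz's proof.
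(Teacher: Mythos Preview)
Your plan identifies the real obstacle---avoiding $\BDU{3}$ when two pieces of information under $\Phi$ must be combined---but the proposed circumvention does not work.  The inductive scheme you describe, proving
\[
S \vdash \forall \vec x\,\bigl(g(\vec x) = 0 \to \Phi(\gdl{g(\vec{\dot x}) = 0})\bigr)
\]
for a \emph{single} equation by induction on the build-up of $g$, breaks at the first composition-like constructor: if $g = g_1 + g_2$, the hypothesis $g(\vec x) = 0$ yields $g_1(\vec x) = 0$ and $g_2(\vec x) = 0$, whence the IH produces two separate boxes $\Phi(\gdl{g_1(\vec{\dot x}) = 0})$ and $\Phi(\gdl{g_2(\vec{\dot x}) = 0})$, and merging them into $\Phi(\gdl{g_1(\vec{\dot x}) + g_2(\vec{\dot x}) = 0})$ is exactly a $\BDU{3}$-move.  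Your suggested remedy, ``$\DU{1}$ applied to an explicit primitive recursive proof-term witnessing $\Sigma_1$-completeness of $\mathsf{Q}$'', cannot help: $\Phi$ is an \emph{arbitrary} formula satisfying only $\D{1}$ and $\BDU{2}$, with no assumed relation to any proof predicate, and $\BDU{2}$ only lets you use a \emph{uniformly $T$-provable} implication inside $\Phi$, never the external hypothesis $g(\vec x) = 0$.

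The paper's proof avoids the merger problem by a device your proposal does not mention: it carries an entire \emph{conjunction} $\bigwedge_{i \leq k}(z_i = t_i(\vec x))$ under a single $\Phi$ throughout the induction (Lemmas~\ref{MTL3}--\ref{MTL5}), so that nothing ever has to be combined---new conjuncts are grown by term substitution and $\PA$-equivalence (Lemmas~\ref{MTL0} and~\ref{MTL1}), not by joining separate boxes.  The base case, where every $t_i$ is a variable (Lemma~\ref{MTL3}), is handled by a second idea absent from your sketch: from $\DU{1}$ one has $\Phi(\gdl{\bigwedge_i \dot z_i = \dot z_i})$, and then the \emph{equality axioms of the metatheory} (not any derivability condition) give that $z_i = x_i$ implies the G\"odel-number terms $\gdl{\bigwedge_i \dot z_i = \dot z_i}$ and $\gdl{\bigwedge_i \dot z_i = \dot x_i}$ are literally equal, so $\Phi$ transfers.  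Finally, the paper works in the language $\{0,\mathsf{s},+,\times\}$ via the MRDP theorem rather than with arbitrary primitive recursive terms, which keeps the case analysis in Lemma~\ref{MTL4} finite.
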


This theorem says that only the $m = 1, 2$ cases of Buchholz's assumption are sufficient to prove $\SCU$. 
We will also prove that Theorem \ref{MT} is actually an improvement of Theorem \ref{UBuc} (see Theorem \ref{MT2} below). 
Interestingly, for $\Sigma_1$ formulas, $\{\D{1}, \BDU{2}\}$ implies $\{\D{1}, \BD{2}, \D{3}\}$, $\{\D{1}, \SC\}$, $\{\D{1}, \PC\}$ and $\{\BD{2}, \CB, \DCU\}$ by Theorem \ref{MT} and Proposition \ref{UP1}, and each of them is sufficient for $T \nvdash \Con^H$. 
As a consequence, we obtain the following corollary. 

\begin{cor}\label{UC3}
If $\Phi(x)$ is a $\Sigma_1$ formula satisfying $\D{1}$ and $\BDU{2}$, then $T \nvdash \Con^H$. 
\end{cor}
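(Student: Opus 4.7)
The plan is to derive Corollary \ref{UC3} by collecting enough consequences of $\{\D{1},\BDU{2}\}$ to invoke one of the $\Con^H$-unprovability theorems already established in the section, most naturally Theorem \ref{UHB} (Hilbert--Bernays) or Theorem \ref{G2-3}. Since the excerpt essentially announces that ``$\{\D{1},\BDU{2}\}$ implies $\{\BD{2},\CB,\DCU\}$, $\{\D{1},\BD{2},\D{3}\}$, $\{\D{1},\SC\}$ and $\{\D{1},\PC\}$ for $\Sigma_1$ formulas,'' the proof will be a short bookkeeping argument once Theorem \ref{MT} is in hand.

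First I would extract the \emph{local} uniform-style consequences of $\BDU{2}$: taking the vector $\vec{x}$ to be empty in the definition of $\BDU{2}$ immediately yields $\BD{2}$, and Proposition \ref{UP2}.2 gives $\CB$ from $\BDU{2}$. Next, I would apply Theorem \ref{MT} to the hypotheses to obtain $\SCU$. Since $\Phi$ is $\Sigma_1$ and every $\Delta_0$ formula is $\Sigma_1$, $\SCU$ specializes to $\DCU$ (and also to $\SC$).

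At this point we have, for the $\Sigma_1$ formula $\Phi$, all of $\D{1}$, $\BD{2}$, $\CB$ and $\DCU$. This is exactly the hypothesis of Theorem \ref{UHB}, which delivers $T\nvdash\Con^H$. Alternatively, one could finish via Theorem \ref{Jer} (using $\D{1}$ and $\SC=\GC$ with $\Gamma=\Sigma_1$), or via Theorem \ref{G2-3} after first promoting $\SC$ to $\PC$ using Proposition \ref{LP1}.6 together with $\BD{2}$; each of these routes is equally short. I would most likely present the argument through Theorem \ref{UHB} because it uses the weakest downstream ingredients.

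There is no substantive obstacle once Theorem \ref{MT} is available: the work has already been front-loaded into the improved provable $\Sigma_1$-completeness result, and Corollary \ref{UC3} is merely a harvesting step. The only point requiring mild care is the observation that $\SCU$ entails $\DCU$ (since $\Delta_0\subseteq\Sigma_1$ and we are dealing with the uniform, not merely local, form), but this is immediate from the definitions. Thus the proof proposal is essentially: apply Theorem \ref{MT}, read off $\BD{2}$, $\CB$, $\DCU$ using Proposition \ref{UP2}.2 and the trivial specialisations, and invoke Theorem \ref{UHB}.
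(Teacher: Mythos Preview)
Your proposal is correct and follows essentially the same route as the paper: the paper states just before Corollary~\ref{UC3} that for $\Sigma_1$ formulas $\{\D{1},\BDU{2}\}$ implies each of $\{\BD{2},\CB,\DCU\}$, $\{\D{1},\BD{2},\D{3}\}$, $\{\D{1},\SC\}$ and $\{\D{1},\PC\}$ (via Theorem~\ref{MT} and the earlier propositions), any one of which yields $T\nvdash\Con^H$; you make this explicit by choosing the Hilbert--Bernays route through Theorem~\ref{UHB} and noting the alternatives.
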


Related to Corollary \ref{UC3}, we propose the following problem. 

\begin{prob}
Is there a $\Sigma_1$ formula $\Phi(x)$ satisfying $\D{1}$ and $\BDU{2}$ such that $T \vdash \Con^L$? 
\end{prob}

In contrast to the consistency statements $\Con^H$ and $\Con^L$, Proposition \ref{WP2} in Section \ref{Sec:W} shows that the full uniform derivability conditions are not sufficient for the unprovability of $\Con^{\Sigma_1}$ and $\Con^G$. 

From Theorem \ref{MT} and Proposition \ref{UP1}.5, we obtain the following corollary. 

\begin{cor}
$\D{1}$ and $\BDU{2} \Rightarrow \PCU$. 
\end{cor}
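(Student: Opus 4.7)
The plan is to derive this corollary as an immediate composition of two results already in place, so the proof should be very short. First I would invoke Theorem~\ref{MT}, which says precisely that $\D{1}$ together with $\BDU{2}$ yields $\SCU$. This gives us, from the hypotheses, both $\BDU{2}$ and $\SCU$ at our disposal.

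Next I would apply Proposition~\ref{UP1}.5, whose $(\Leftarrow)$ direction asserts that if $\Phi(x)$ satisfies $\BDU{2}$ and $\SCU$, then it satisfies $\PCU$ as well. Feeding in the conjunction $\BDU{2}\wedge\SCU$ obtained in the previous step concludes $\PCU$, which is exactly the statement of the corollary.

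There is essentially no obstacle here: all the real work has been done in Theorem~\ref{MT} (Buchholz-style schematic proof of uniform $\Sigma_1$-completeness from $\D{1}$ and $\BDU{2}$) and in the general equivalence of Proposition~\ref{UP1}.5, which was established by the standard trick of passing through $\PRL(\gdl{\varphi(\vec{\dot x})})$ as a $\Sigma_1$ sentence, applying $\SCU$ to it, and then using $\BDU{2}$ together with the reflexiveness of $\PA$ to transport the conclusion back to $\Phi(\gdl{\varphi(\vec{\dot x})})$. So the full proof can be rendered in a single sentence:
\begin{proof}
By Theorem~\ref{MT}, $\Phi(x)$ satisfies $\SCU$, and hence by the $(\Leftarrow)$ direction of Proposition~\ref{UP1}.5, $\Phi(x)$ satisfies $\PCU$.
\end{proof}
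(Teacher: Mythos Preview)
Your proposal is correct and matches the paper's own argument exactly: the paper simply states that the corollary follows from Theorem~\ref{MT} and Proposition~\ref{UP1}.5, which is precisely the two-step composition you give.
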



Moreover, we show that $\D{1}$ and $\BDU{2}$ imply a stronger version of $\PCU$. 
For $n \geq 0$, let $\mathsf{True}_{\Sigma_n}(x)$ be a natural formula saying that ``$x$ is a true $\Sigma_n$ sentence'' (cf.~H\'ajek and Pudl\'ak \cite{HP93}). 

\begin{prop}\label{UP1.5}
If $\Phi(x)$ satisfies $\D{1}$ and $\BDU{2}$, then for $n \geq 0$, 
\[
	S \vdash \forall x(\Sigma_n(x) \land \PRL(x) \to \Phi(\gdl{\mathsf{True}_{\Sigma_n}(\dot{x})})). 
\]
\end{prop}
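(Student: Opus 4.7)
The plan is to chain two implications: from $\chi(x) := \Sigma_n(x) \land \PRL(x)$ to $\Phi(\gdl{\chi(\dot{x})})$, and from $\Phi(\gdl{\chi(\dot{x})})$ to $\Phi(\gdl{\mathsf{True}_{\Sigma_n}(\dot{x})})$. The first link will come from provable $\Sigma_1$-completeness applied to $\chi(x)$, which is $\Sigma_1$ since $\Sigma_n(x)$ is $\Delta_1$ and $\PRL(x)$ is $\Sigma_1$; the second link will come from $\BDU{2}$ applied to the formalized soundness of predicate calculus on $\Sigma_n$ sentences.

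First I would invoke Theorem \ref{MT} to upgrade the hypothesis $\D{1}$ and $\BDU{2}$ to $\SCU$. Applying $\SCU$ to the $\Sigma_1$ formula $\chi(x)$ yields
\[
S \vdash \forall x\bigl(\chi(x) \to \Phi(\gdl{\chi(\dot{x})})\bigr).
\]

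Next I would appeal to the standard fact that $\PA$ formalizes the soundness of predicate calculus against the partial truth predicate $\mathsf{True}_{\Sigma_n}$, namely
\[
\PA \vdash \forall x\bigl(\Sigma_n(x) \land \PRL(x) \to \mathsf{True}_{\Sigma_n}(x)\bigr).
\]
Since $T$ extends $\PA$, applying $\BDU{2}$ to this implication gives
\[
S \vdash \forall x\bigl(\Phi(\gdl{\chi(\dot{x})}) \to \Phi(\gdl{\mathsf{True}_{\Sigma_n}(\dot{x})})\bigr),
\]
and chaining the two displays delivers the proposition.

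The main technical ingredient is the formalized soundness step, but this is a standard property of the usual construction of the partial truth predicates $\mathsf{True}_{\Sigma_n}(x)$ (see, e.g., H\'ajek and Pudl\'ak \cite{HP93}); once it is in hand, the remainder of the argument is a routine combination of $\BDU{2}$ with the $\SCU$ provided by Theorem \ref{MT}.
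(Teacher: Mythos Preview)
Your proof is correct and follows essentially the same route as the paper's: first use Theorem~\ref{MT} to obtain $\SCU$ and apply it to the $\Sigma_1$ formula $\Sigma_n(x)\land\PRL(x)$, then use $\BDU{2}$ on the $T$-provable implication $\Sigma_n(x)\land\PRL(x)\to\mathsf{True}_{\Sigma_n}(x)$ (which the paper justifies by ``reflexiveness'' of $\PA$), and chain. The only cosmetic difference is that you explicitly note why $\chi(x)$ is $\Sigma_1$, which the paper leaves implicit.
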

\begin{proof}
Suppose that $\Phi(x)$ satisfies $\D{1}$ and $\BDU{2}$, and let $n \geq 0$. 
By Theorem \ref{MT}, $\Phi(x)$ satisfies $\SCU$, and hence $S \vdash \Sigma_n(x) \land \PRL(x) \to \Phi(\gdl{\Sigma_n(\dot{x}) \land \PRL(\dot{x})})$. 
By reflexiveness, $T \vdash \Sigma_n(x) \land \PRL(x) \to \mathsf{True}_{\Sigma_n}(x)$. 
Then $S \vdash \Phi(\gdl{\Sigma_n(\dot{x}) \land \PRL(\dot{x})}) \to \Phi(\gdl{\mathsf{True}_{\Sigma_n}(\dot{x})})$ by $\BDU{2}$. 
We conclude $S \vdash \Sigma_n(x) \land \PRL(x) \to \Phi(\gdl{\mathsf{True}_{\Sigma_n}(\dot{x})})$. 
\end{proof}

\subsection{Global derivability conditions}

At last, we introduce the strongest version of derivability conditions. 
They are called global derivability conditions. 

\begin{defn}[Global derivability conditions]\leavevmode
\begin{description} 
	\item [$\DG{2}$] $S \vdash \forall x \forall y(\Fml(x) \land \Fml(y) \to (\Phi(x \dot{\to} y) \to (\Phi(x) \to \Phi(y))))$. 
	\item [$\DG{3}$] $S \vdash \forall x (\Fml(x) \to (\Phi(x) \to \Phi(\gdl{\Phi(\dot{x})})))$. 
	\item [$\GCG$] $S \vdash \forall x(\mathsf{True}_\Gamma(x) \to \Phi(x))$. 
	\item [$\PCG$] $S \vdash \forall x(\Fml(x) \to (\PRL(x) \to \Phi(x)))$. 
\end{description}
\end{defn}
The condition $\DG{2}$ for provability predicates $\PR_T(x)$ was proved in Feferman \cite{Fef60}. 
Montagna \cite{Mon79} investigated the condition $\DG{2}$. 
The condition $\SCG$ for $\PR_\mathsf{Q}(x)$ is explicitly stated in the book \cite{HP93}. 
Global derivability conditions are strictly stronger than uniform derivability conditions (see Proposition \ref{WP2}).

We can prove the following proposition as in the uniform version. 

\begin{prop}\label{GP1}\leavevmode
\begin{enumerate}
	\item If $\Phi(x)$ is a $\Gamma$ formula, then $\GCU \Rightarrow \DG{3}$. 
	\item $\D{1}$, $\DG{2}$ and $\PCG \Rightarrow \SCG$. 
\end{enumerate}
\end{prop}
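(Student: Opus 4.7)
The plan is to handle the two clauses separately, with clause 1 a straightforward specialization of $\GCU$ and clause 2 a global analogue of the argument given for Proposition \ref{LP1}.6 $(\Leftarrow)$.

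For clause 1, I would simply instantiate $\GCU$ with the distinguished formula $\varphi(x) := \Phi(x)$, which is permissible because by hypothesis $\Phi(x)$ is itself a $\Gamma$ formula. This yields $S \vdash \forall x(\Phi(x) \to \Phi(\gdl{\Phi(\dot{x})}))$, from which $\DG{3}$ follows by adding the harmless hypothesis $\Fml(x)$ in the antecedent.

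For clause 2 the target is $\SCG$, i.e.\ $S \vdash \forall x(\mathsf{True}_{\Sigma_1}(x) \to \Phi(x))$. First I would recall the well-known uniform provable $\Sigma_1$-completeness of $\mathsf{Q}$ (this is exactly what is referred to as $\SCG$ for $\PR_\mathsf{Q}(x)$ in \cite{HP93}), giving $\PA \vdash \forall x(\mathsf{True}_{\Sigma_1}(x) \to \PR_\mathsf{Q}(x))$. Fixing a finite subtheory $T_0 \supseteq \mathsf{Q}$ of $T$ and applying the formalized deduction theorem (Lemma \ref{FDT}) converts this into $\PA \vdash \forall x(\mathsf{True}_{\Sigma_1}(x) \to \PRL(\gdl{\bigwedge T_0} \dot{\to} x))$. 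Since $\mathsf{True}_{\Sigma_1}(x)$ provably entails $\Fml(x)$, I can feed this into $\PCG$ at the formula $\gdl{\bigwedge T_0} \dot{\to} x$ to obtain
\[
    S \vdash \forall x(\mathsf{True}_{\Sigma_1}(x) \to \Phi(\gdl{\bigwedge T_0} \dot{\to} x)).
\]
On the other hand, since $T \vdash \bigwedge T_0$, condition $\D{1}$ gives $S \vdash \Phi(\gdl{\bigwedge T_0})$. A single application of $\DG{2}$ inside $S$ then detaches $\Phi(x)$ from $\Phi(\gdl{\bigwedge T_0} \dot{\to} x)$ and $\Phi(\gdl{\bigwedge T_0})$, yielding the desired $S \vdash \forall x(\mathsf{True}_{\Sigma_1}(x) \to \Phi(x))$.

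The only real obstacle is bookkeeping: one has to verify that inside $\PA$ the formula $\mathsf{True}_{\Sigma_1}(x)$ carries enough information ($\Fml(x)$, $\Sent(x)$, $\Sigma_1(x)$) that the $\Fml$-side conditions in $\PCG$ and $\DG{2}$ are met uniformly in $x$, and to check that the formalized deduction theorem goes through smoothly when the antecedent is the closed term $\gdl{\bigwedge T_0}$ rather than the numeral of an arbitrary sentence. Once these routine checks are done, the four-step chain $\text{completeness for }\mathsf{Q} \to \PCG \to \D{1} \to \DG{2}$ closes the argument.
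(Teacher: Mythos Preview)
Your proof is correct and is precisely the global analogue of the argument for Proposition~\ref{LP1}.6, which is exactly what the paper intends by remarking that the proposition is proved ``as in the uniform version.'' One small slip: you meant to cite the direction $(\Rightarrow)$ of Proposition~\ref{LP1}.6 (from $\PC$ to $\SC$), not $(\Leftarrow)$; the argument you actually wrote is the right one.
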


Proposition \ref{GP1}.2 was stated in von B\"ulow \cite{vB08} and Visser \cite{Vis}. 

Consistency statements are enhanced by global derivability conditions. 

\begin{prop}\label{GP2}\leavevmode
\begin{enumerate}
	\item If $\Phi(x)$ satisfies $\DG{2}$ and $\PCG$, then $S \vdash \Con^G \to \Con^H$. 
	\item If $\Phi(x)$ satisfies $\D{1}$, $\DG{2}$ and $\PCG$, then $\Con^H$, $\Con^L$ and $\Con^G$ are mutually equivalent in $S$. 
	\item If $\Phi(x)$ satisfies $\DG{2}$ and $\SCG$, then $\Con^L$ and $\Con^{\Sigma_1}$ are equivalent in $S$. 
\end{enumerate}
\end{prop}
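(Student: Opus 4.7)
The plan is to handle the three clauses in order, reusing earlier propositions where possible and bringing in the global machinery only where new work is required. For clause~1, I would argue by contraposition in $S$: assume there exists $a$ with $\Fml(a) \land \Phi(a) \land \Phi(\dot{\neg} a)$, and let $y$ be any formula. Since $\varphi \to (\neg \varphi \to \psi)$ is a theorem of predicate calculus for all $\varphi, \psi$, one has $\PA \vdash \forall x \forall y(\Fml(x) \land \Fml(y) \to \PRL(x \dot{\to} (\dot{\neg} x \dot{\to} y)))$, so $\PCG$ gives $\Phi(a \dot{\to} (\dot{\neg} a \dot{\to} y))$, and two applications of $\DG{2}$ (first eliminating $\Phi(a)$, then $\Phi(\dot{\neg} a)$) yield $\Phi(y)$. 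Thus $\forall y(\Fml(y) \to \Phi(y))$, which is $\neg \Con^G$.

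For clause~2, the ingredients are already at hand: Proposition~\ref{LP2}.1 gives $S \vdash \Con^H \to \Con^L$ from $\D{1}$; Proposition~\ref{LP2}.2 and~\ref{LP2}.3 give $\PA \vdash \Con^L \to \Con^G$ (via $\Con^{\Sigma_1}$); and clause~1 closes the cycle with $S \vdash \Con^G \to \Con^H$. Hence the three statements are mutually equivalent in $S$.

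For clause~3, the direction $\Con^L \to \Con^{\Sigma_1}$ is Proposition~\ref{LP2}.2. For the converse, I would argue the contrapositive in $S$: assuming $\Phi(\gdl{0 \neq 0})$, I want $\Phi(x)$ for every $\Sigma_1$ sentence $x$. The key is to consider the formula with G\"odel number $\gdl{0 \neq 0} \dot{\to} x$. Because $0 \neq 0$ is $\Delta_0$ (hence $\Pi_1$) and $x$ is $\Sigma_1$, this is itself a $\Sigma_1$ sentence, and because $0 \neq 0$ is $\PA$-refutable it is moreover a \emph{true} $\Sigma_1$ sentence; formally, by the Tarskian clauses for the partial truth predicate, $\PA \vdash \forall x(\Sigma_1(x) \land \Sent(x) \to \mathsf{True}_{\Sigma_1}(\gdl{0 \neq 0} \dot{\to} x))$. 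Then $\SCG$ yields $\Phi(\gdl{0 \neq 0} \dot{\to} x)$, and one application of $\DG{2}$ combined with $\Phi(\gdl{0 \neq 0})$ gives $\Phi(x)$.

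The main obstacle is the last step of clause~3: verifying inside $S$ that $\gdl{0 \neq 0} \dot{\to} x$ really falls under $\mathsf{True}_{\Sigma_1}$ whenever $x$ does. This amounts to a routine but somewhat delicate check that the syntactic $\Sigma_1$ classification and the defining Tarskian clauses of $\mathsf{True}_{\Sigma_1}$ interact correctly with the primitive recursive term $\dot{\to}$; once that auxiliary fact is in place, the conclusions in all three clauses follow directly from $\DG{2}$ together with $\PCG$ or $\SCG$ as appropriate.
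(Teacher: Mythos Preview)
Your proposal is correct and follows essentially the same route as the paper's proof: for clause~1 the paper also uses $\PA \vdash \forall x \forall y (\Fml(x) \land \Fml(y) \to \PRL(x \dot{\to} (\dot{\neg} x \dot{\to} y)))$, applies $\PCG$, then $\DG{2}$ twice; clause~2 is likewise reduced to clause~1 together with Proposition~\ref{LP2}; and for clause~3 the paper argues exactly as you do, observing $\PA \vdash \neg \mathsf{True}_{\Sigma_1}(\gdl{0 \neq 0})$ so that $\PA \vdash \Sigma_1(x) \land \Sent(x) \to \mathsf{True}_{\Sigma_1}(\gdl{0 \neq 0} \dot{\to} x)$, then applies $\SCG$ and $\DG{2}$. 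Your identification of the Tarskian check in clause~3 as the only point requiring care matches the paper's treatment.
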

\begin{proof}
1. Suppose $\Phi(x)$ satisfies $\DG{2}$ and $\PCG$. 
Since $\PA \vdash \forall x \forall y (\Fml(x) \land \Fml(y) \to \PRL(x \dot{\to} (\dot{\neg} x \dot{\to} y)))$, $S \vdash \forall x \forall y (\Fml(x) \land \Fml(y) \to \Phi(x \dot{\to} (\dot{\neg} x \dot{\to} y)))$ by $\PCG$. 
Hence $\forall x \forall y (\Fml(x) \land \Fml(y) \land \Phi(x) \land \Phi(\dot{\neg} x) \to \Phi(y))$ is provable in $S$ by $\DG{2}$. 
This sentence is equivalent to $\Con^G \to \Con^H$. 

2. This follows from Proposition \ref{LP2} and clause 1. 

3. Suppose $\Phi(x)$ satisfies $\DG{2}$ and $\SCG$. 
By Proposition \ref{LP2}, it suffices to show $S \vdash \Con^{\Sigma_1} \to \Con^L$. Since $\PA \vdash \neg \mathsf{True}_{\Sigma_1}(\gdl{0 \neq 0})$, $\PA \vdash \Sigma_1(x) \land \Sent(x) \to \mathsf{True}_{\Sigma_1}(\gdl{0 \neq 0} \dot{\to} x)$. 
By $\SCG$, $S \vdash \Sigma_1(x) \land \Sent(x) \to \Phi(\gdl{0 \neq 0} \dot{\to} x)$. 
By $\DG{2}$, $S \vdash \Sigma_1(x) \land \Sent(x) \to (\Phi(\gdl{0 \neq 0}) \to \Phi(x))$. 
Thus $S \vdash \Con^{\Sigma_1} \to \Con^L$. 
\end{proof}

From Theorems \ref{G2} and \ref{G2-3}, and Proposition \ref{GP2}, we obtain the following corollary. 

\begin{cor}\label{GC1}\leavevmode
\begin{enumerate}
	\item If $\Phi(x)$ is a $\Sigma_1$ formula satisfying $\D{1}$, $\DG{2}$ and $\PCG$, then $T \nvdash \Con^G$. 
	\item If $\Phi(x)$ is a $\Sigma_1$ formula satisfying $\D{1}$, $\DG{2}$ and $\SCG$, then $T \nvdash \Con^{\Sigma_1}$. 
\end{enumerate}
\end{cor}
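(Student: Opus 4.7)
The plan is to observe that each of the two statements reduces almost immediately to an earlier unprovability result once one notes that every global derivability condition trivially entails its local counterpart, and then transfers the unprovability across the consistency statements by means of Proposition \ref{GP2}.

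For clause 1, suppose $\Phi(x)$ is $\Sigma_1$ and satisfies $\D{1}$, $\DG{2}$ and $\PCG$. The condition $\PCG$ clearly implies the local condition $\PC$ (instantiate $x$ by $\gdl{\varphi}$ and discharge $\Fml(\gdl{\varphi})$, which is provable in $\PA$). So $\Phi(x)$ is a $\Sigma_1$ formula satisfying $\D{1}$ and $\PC$, and Theorem \ref{G2-3} gives $T \nvdash \Con^H$. By Proposition \ref{GP2}.1 we have $S \vdash \Con^G \to \Con^H$, and because $T$ extends $S$, a proof of $\Con^G$ in $T$ would yield a proof of $\Con^H$ in $T$, contradicting what we just showed. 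Hence $T \nvdash \Con^G$.

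For clause 2, suppose $\Phi(x)$ is $\Sigma_1$ and satisfies $\D{1}$, $\DG{2}$ and $\SCG$. Again, $\DG{2}$ entails the local condition $\D{2}$ (substitute numerals $\gdl{\varphi}$, $\gdl{\psi}$ and use provable $\Fml$-facts), and $\SCG$ entails the local condition $\SC$ (apply to $x = \gdl{\varphi}$). Since $\Phi(x)$ is $\Sigma_1$, for any formula $\varphi$ the sentence $\Phi(\gdl{\varphi})$ is itself a $\Sigma_1$ sentence, so instantiating $\SC$ with it yields $S \vdash \Phi(\gdl{\varphi}) \to \Phi(\gdl{\Phi(\gdl{\varphi})})$, which is $\D{3}$. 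Thus $\Phi(x)$ satisfies $\D{1}$, $\D{2}$ and $\D{3}$, and Theorem \ref{G2} gives $T \nvdash \Con^L$. Proposition \ref{GP2}.3 states that $\Con^L$ and $\Con^{\Sigma_1}$ are equivalent in $S$, so $T \nvdash \Con^{\Sigma_1}$ follows.

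There is essentially no obstacle; the content is entirely in the lemmas cited. The only point that deserves a moment's attention is the passage from $\SC$ to $\D{3}$, which only works because $\Phi$ is assumed $\Sigma_1$ (so that provable $\Sigma_1$-completeness of $\Phi$ can be applied to a sentence of the form $\Phi(\gdl{\varphi})$). Everything else is unpacking notation and invoking the transfer of unprovability along the implications between the four consistency statements.
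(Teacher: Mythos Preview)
Your proof is correct and follows essentially the same route as the paper, which simply records that the corollary follows from Theorems \ref{G2} and \ref{G2-3} together with Proposition \ref{GP2}. You have merely spelled out the implicit steps (global conditions yield their local counterparts, and in clause 2 $\SC$ with $\Phi \in \Sigma_1$ gives $\D{3}$, i.e.\ Proposition \ref{LP1}.5), which is exactly what the paper intends.
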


Corollary \ref{UC1}.2 and Proposition \ref{GP1}.2 show that $\{\DU{1}, \DG{2}, \SCG\}$ is weaker than $\{\D{1}, \DG{2}, \PCG\}$. 
Moreover, Proposition \ref{WP3} in Section \ref{Sec:W} shows the following interesting non-implication:  
\begin{itemize}
	\item $\{\Phi \in \Sigma_1, \DU{1}, \DG{2}, \SCG\} \not \Rightarrow T \nvdash \Con^G$. 
\end{itemize}
Hence in contrast to local and uniform versions, $\{\DU{1}, \DG{2}, \SCG\}$ is strictly weaker than $\{\D{1}, \DG{2}, \PCG\}$. 
Also this non-implication indicates that global derivability conditions except for $\PCG$ are not sufficient for the unprovability of G\"odel's consistency statement $\Con^G$ even if $\Phi$ is $\Sigma_1$. 
This shows that neither Hilbert--Bernays' conditions nor L\"ob's conditions accomplish G\"odel's original statement of the second incompleteness theorem.

Let $\mathsf{LogAx}(x)$ be a suitable $\Delta_1$ formula representing the set of all logical axioms of predicate calculus formulated in Feferman's paper \cite{Fef60}. 
In Feferman's formulation, the sole inference rule is modus ponens, and the generalization rule is admissible (see Result 2.1 in \cite{Fef60}). 
The following condition was introduced by Montagna \cite{Mon79}. 

\begin{defn}
\leavevmode
\begin{description}
	\item [$\Ax$] $S \vdash \forall x(\mathsf{LogAx}(x) \to \Phi(x))$. 
\end{description}
\end{defn}

The condition $\Ax$ is related to the condition $\PCG$. 

\begin{prop}\label{GP3}\leavevmode
\begin{enumerate}
	\item $\PCG \Rightarrow \Ax$. 
	\item $\DG{2}$ and $\Ax \Rightarrow \PCG$. 
	\item If $\Phi(x)$ satisfies $\D{1}$, then for any sentence $\varphi$, $S \vdash \mathsf{LogAx}(\gdl{\varphi}) \to \Phi(\gdl{\varphi})$. 
\end{enumerate}
\end{prop}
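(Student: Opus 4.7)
The plan is to take the three clauses in order and reduce the only non-trivial one to a formalized induction on proof length. Clause 1 is essentially immediate: every logical axiom is provable in pure predicate calculus and is itself a formula, so $\PA \vdash \forall x(\mathsf{LogAx}(x) \to \Fml(x) \land \PRL(x))$, and combining this with $\PCG$ directly yields $\Ax$.

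For clause 2 I would argue inside $S$. Fix $x$ with $\Fml(x)$ and $\PRL(x)$, and let $p$ be a witnessing proof. Because Feferman's formulation uses modus ponens as the sole primitive inference rule (with generalization only admissible), every line of $p$ is either a logical axiom or obtained by modus ponens from two earlier lines. I would prove by induction on the position $i$ the statement that $\Phi$ holds of every line through position $i$: the logical-axiom case is handled by $\Ax$, and the modus ponens case (where $p_i$ follows from some $p_j$ and $p_k = p_j \dot{\to} p_i$ with $j, k < i$) is handled by $\DG{2}$, the induction hypothesis supplying $\Phi(p_j)$ and $\Phi(p_k)$. Since $S$ extends $\PA$, induction is available for the relevant arithmetic formula regardless of the complexity of $\Phi$. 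Specializing $i$ to the length of $p$ gives $\Phi(x)$, which yields $\PCG$.

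For clause 3 I would argue meta-theoretically by cases on the fixed sentence $\varphi$. If $\varphi$ is a logical axiom, then $T \vdash \varphi$, so $\D{1}$ yields $S \vdash \Phi(\gdl{\varphi})$ and hence the required implication. If $\varphi$ is not a logical axiom, then since $\mathsf{LogAx}$ is $\Delta_1$ and represents the set of logical axioms, $\PA \vdash \neg \mathsf{LogAx}(\gdl{\varphi})$, so the implication is vacuous. The main technical obstacle is the formalization step in clause 2: one needs a coding of predicate-calculus proofs under which $\PA$ proves that every line is either a logical axiom or an MP-consequence of two earlier lines, and for which the ``line at position $i$'' function and the position predicate are well-behaved; once these standard bookkeeping facts are in place the logical core of the induction is routine.
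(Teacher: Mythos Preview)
Your proposal is correct and matches the paper's proof essentially line for line: clause~1 via $\PA \vdash \forall x(\mathsf{LogAx}(x) \to \PRL(x))$, clause~2 via induction on proof length inside $S$ using $\Ax$ for axiom lines and $\DG{2}$ for modus ponens, and clause~3 via the same metatheoretic case split. The only cosmetic difference is that in clause~2 the paper first passes through an auxiliary predicate $\PRL'(x)$ for Feferman's MP-only calculus (proving $\PA \vdash \forall x(\Fml(x) \to (\PRL(x) \to \PRL'(x)))$ and then $S \vdash \forall x(\Fml(x) \to (\PRL'(x) \to \Phi(x)))$), whereas you work directly with $\PRL$ under the assumption that it already refers to that calculus; this is a bookkeeping detail, not a difference in approach.
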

\begin{proof}
1. This is because $\PA \vdash \forall x(\mathsf{LogAx}(x) \to \PRL(x))$. 

2. Let $\PRL'(x)$ be a natural provability predicate of the predicate calculus formulated in Feferman's framework. 
Then $\PA \vdash \forall x(\Fml(x) \to (\PRL(x) \to \PRL'(x)))$ holds by induction inside $\PA$. 
Since $S$ proves that $\Phi(x)$ contains axioms of $\PRL'(x)$ by $\Ax$ and that $\Phi(x)$ is closed under the inference rule of $\PRL'(x)$ by $\DG{2}$, $S$ proves $\forall x (\Fml(x) \to (\PRL'(x) \to \Phi(x)))$ by induction inside $S$. 
Hence $S \vdash \forall x(\Fml(x) \to (\PRL(x) \to \Phi(x)))$ holds. 

3. Let $\varphi$ be any sentence. 
If $\varphi$ is a logical axiom, then $T \vdash \varphi$. 
By $\D{1}$, $S \vdash \Phi(\gdl{\varphi})$. 
If $\varphi$ is not a logical axiom, then $S \vdash \neg \mathsf{LogAx}(\gdl{\varphi})$. 
In either case, we obtain $S \vdash \mathsf{LogAx}(\gdl{\varphi}) \to \Phi(\gdl{\varphi})$. 
\end{proof}

Montagna \cite{Mon79} proved that if $\Phi(x)$ satisfies $\D{1}$, $\DG{2}$ and $\Ax$, then $\D{3}$ is redundant for a proof of L\"ob's theorem. 
From Propositions \ref{GP1} and \ref{GP3}, and Corollaries \ref{UC1}.2 and \ref{GC1}, we obtain the following improvement of Montagna's result. 

\begin{cor}[Montagna \cite{Mon79}]\label{Mon2}\leavevmode
\begin{enumerate}
	\item $\D{1}$, $\DG{2}$ and $\Ax \Rightarrow \DU{1}$ and $\SCG$. 
	\item If $\Phi(x)$ is a $\Sigma_1$ formula satisfying $\D{1}$, $\DG{2}$ and $\Ax$, then $T \nvdash \Con^G$. 
\end{enumerate}
\end{cor}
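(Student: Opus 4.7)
The plan is to derive this corollary from the already established propositions, with no new ideas required beyond chaining them. Both parts will go through the observation that the hypothesis $\DG{2} \land \Ax$ already delivers $\PCG$, at which point everything else slots into place.

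For part 1, I would first apply Proposition \ref{GP3}.2 to obtain $\PCG$ from $\DG{2}$ and $\Ax$. Then Proposition \ref{GP1}.2 gives $\SCG$ immediately from $\D{1}$, $\DG{2}$, and $\PCG$, which settles half of the conclusion. To get $\DU{1}$, I would invoke Corollary \ref{UC1}.2, which says $\D{1}$, $\DU{2}$, $\PCU \Rightarrow \DU{1}$. The only thing to check is that $\DG{2}$ implies $\DU{2}$ and that $\PCG$ implies $\PCU$. Both reductions are immediate: given any formulas $\varphi(\vec x), \psi(\vec x)$, the terms $\gdl{\varphi(\vec{\dot x})}$ and $\gdl{\psi(\vec{\dot x})}$ provably satisfy $\Fml$, so instantiating the universal $\Fml$-guarded global statement yields the corresponding uniform one inside $S$.

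For part 2, I would again derive $\PCG$ from $\DG{2}$ and $\Ax$ via Proposition \ref{GP3}.2, and then apply Corollary \ref{GC1}.1, which says precisely that any $\Sigma_1$ formula $\Phi(x)$ satisfying $\D{1}$, $\DG{2}$, and $\PCG$ gives $T \nvdash \Con^G$.

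There is really no obstacle here, as the work has all been done in Propositions \ref{GP3} and \ref{GP1} and Corollaries \ref{UC1} and \ref{GC1}; the only thing worth writing out in the final proof is the explicit reduction of $\DG{2}$ to $\DU{2}$ (and $\PCG$ to $\PCU$) needed to invoke Corollary \ref{UC1}.2, since the earlier text treats the global-implies-uniform reductions only in passing.
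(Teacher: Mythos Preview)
Your proposal is correct and follows exactly the route the paper indicates: the paper simply states that the corollary follows from Propositions \ref{GP1} and \ref{GP3} together with Corollaries \ref{UC1}.2 and \ref{GC1}, which is precisely the chain you describe (using \ref{GP3}.2 to pass from $\Ax$ to $\PCG$, then \ref{GP1}.2 for $\SCG$, \ref{UC1}.2 for $\DU{1}$, and \ref{GC1}.1 for part 2). Your explicit remark that $\DG{2}\Rightarrow\DU{2}$ and $\PCG\Rightarrow\PCU$ is the one step the paper leaves implicit, relying on the earlier observation that global conditions imply their uniform counterparts.
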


\section{Proof of Theorem \ref{MT}}\label{Sec:MT}

In this section, we prove Theorem \ref{MT}, that is, we prove that if $\Phi(x)$ satisfies $\D{1}$ and $\BDU{2}$, then $\Phi(x)$ satisfies $\SCU$. 
Thus in the rest of this section, we fix a formula $\Phi(x)$ satisfying $\D{1}$ and $\BDU{2}$. 
Then by Corollary \ref{UC1}.1, $\Phi(x)$ also satisfies $\DU{1}$. 
First, we prove a lemma, that is an essential application of the condition $\BDU{2}$. 

\begin{lem}\label{MTL1}
Let $\varphi(\vec{x})$ and $\psi(\vec{x})$ be any formulas. 
If $S \vdash \varphi(\vec{x}) \to \Phi(\gdl{\varphi(\vec{\dot{x}})})$ and $\PA \vdash \varphi(\vec{x}) \leftrightarrow \psi(\vec{x})$, then $S \vdash \psi(\vec{x}) \to \Phi(\gdl{\psi(\vec{\dot{x}})})$. 
\end{lem}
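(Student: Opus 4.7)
The plan is to use $\BDU{2}$ twice (once in each direction of the biconditional supplied by $\PA$) to transport $\Phi$ applied to $\varphi$-codes to $\Phi$ applied to $\psi$-codes, and then to chain implications inside $S$.

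First I would observe that since $T$ extends $\PA$, the assumption $\PA \vdash \varphi(\vec{x}) \leftrightarrow \psi(\vec{x})$ gives both $T \vdash \forall \vec{x}(\varphi(\vec{x}) \to \psi(\vec{x}))$ and $T \vdash \forall \vec{x}(\psi(\vec{x}) \to \varphi(\vec{x}))$. Applying $\BDU{2}$ to the first of these yields
\[
S \vdash \forall \vec{x}\,(\Phi(\gdl{\varphi(\vec{\dot{x}})}) \to \Phi(\gdl{\psi(\vec{\dot{x}})})).
\]

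Next, since $S$ also extends $\PA$, we have $S \vdash \psi(\vec{x}) \to \varphi(\vec{x})$. Combining this with the hypothesis $S \vdash \varphi(\vec{x}) \to \Phi(\gdl{\varphi(\vec{\dot{x}})})$ gives $S \vdash \psi(\vec{x}) \to \Phi(\gdl{\varphi(\vec{\dot{x}})})$. Finally, chaining this with the displayed consequence of $\BDU{2}$ produces $S \vdash \psi(\vec{x}) \to \Phi(\gdl{\psi(\vec{\dot{x}})})$, which is exactly the desired conclusion.

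There is no real obstacle here: the lemma is essentially a bookkeeping step, ensuring that provable-$\Sigma_1$-completeness survives under $\PA$-provable equivalences so that later we can freely replace a formula by any logically equivalent reformulation (which will be crucial in the induction for $\SCU$). The only thing to keep track of is that $\BDU{2}$ is what lets us move the equivalence inside the code, and that the non-trivial direction $\psi \to \varphi$ is handled in $S$ directly rather than inside $\Phi$.
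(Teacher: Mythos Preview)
Your proof is correct and follows essentially the same route as the paper: apply $\BDU{2}$ to pass from $\Phi(\gdl{\varphi(\vec{\dot{x}})})$ to $\Phi(\gdl{\psi(\vec{\dot{x}})})$, and use that $S$ extends $\PA$ to get $\psi \to \varphi$ outside the box. The paper phrases the $\BDU{2}$ step as the full biconditional $S \vdash \Phi(\gdl{\varphi(\vec{\dot{x}})}) \leftrightarrow \Phi(\gdl{\psi(\vec{\dot{x}})})$ and then says ``the lemma follows immediately,'' but as you correctly note at the end, only the forward direction is actually needed (so your opening remark about using $\BDU{2}$ ``twice'' is a slight overstatement of what you in fact do).
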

\begin{proof}
If $\PA \vdash \varphi(\vec{x}) \leftrightarrow \psi(\vec{x})$, then by $\BDU{2}$, we have 
\[
	S \vdash \Phi(\gdl{\varphi(\vec{\dot{x}})}) \leftrightarrow \Phi(\gdl{\psi(\vec{\dot{x}})}). 
\]
Then the lemma follows immediately.  
\end{proof}

We may assume that every $\Sigma_1$ $\mathcal{L}_A$-formula is $\PA$-provably equivalent to some $\Sigma_1$ formula written in the language $\{0, \mathsf{s}, +, \times\}$. 
Therefore, in proving Theorem \ref{MT}, it suffices to show $S \vdash \sigma(\vec{x}) \to \Phi(\gdl{\sigma(\vec{\dot{x}})})$ for any $\Sigma_1$ formula $\sigma(\vec{x})$ in the language $\{0, \mathsf{s}, +, \times\}$. 
Hence in the rest of this section, we assume that our terms and formulas are written in $\{0, \mathsf{s}, +, \times\}$. 
Before proving Theorem \ref{MT}, we prepare several lemmas. 

\begin{lem}\label{MTL0}
For any formula $\varphi(\vec{y}, v)$, 
\[
	\PA \vdash \gdl{\varphi(\vec{\dot{y}}, \dot{v})}[\mathsf{s}(x) \slash v] = \gdl{\varphi(\vec{\dot{y}}, \mathsf{s}(\dot{x}))}, 
\]
where $\gdl{\varphi(\vec{\dot{y}}, \dot{v})}[\mathsf{s}(x) \slash v]$ is the result of substituting $\mathsf{s}(x)$ for $v$ of $\gdl{\varphi(\vec{\dot{y}}, \dot{v})}$. 
\end{lem}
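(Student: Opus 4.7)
The assertion is extensionally a triviality: for every assignment of naturals to $\vec y$ and $x$, both sides compute $\gn(\varphi(\overline{y_1},\ldots,\overline{y_\ell},\overline{x+1}))$, since the formal terms $\mathsf s(\overline n)$ and $\overline{\mathsf s(n)} = \overline{n+1}$ coincide literally as $\mathsf s^{n+1}(0)$. Hence the whole content of the lemma is only that this coincidence is $\PA$-provable.

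My plan is to unfold the dot-notation using the primitive recursive numeral function $\num$ (with $\num(n) = \gn(\overline n)$) and the standard primitive recursive substitution function $\mathsf{sub}$, under which
\[
\gdl{\varphi(\vec{\dot y},\dot v)} \;=\; \mathsf{sub}\bigl(\gdl{\varphi(\vec y,v)},\, \num(y_1),\ldots,\num(y_\ell),\, \num(v)\bigr).
\]
Substituting $\mathsf s(x)$ for $v$ on the left changes only the final argument to $\num(\mathsf s(x))$, whereas $\gdl{\varphi(\vec{\dot y},\mathsf s(\dot x))}$ unfolds to the very same expression with final argument $\gdl{\mathsf s(\dot x)}$. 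So, after invoking that $\mathsf{sub}$ respects $\PA$-provable equality of its arguments, the lemma reduces to the single identity
\[
\PA \vdash \num(\mathsf s(v)) \;=\; \gdl{\mathsf s(\dot v)}.
\]

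This last identity is immediate from the recursion scheme defining $\num$: by construction $\num(\mathsf s(v))$ is $\num(v)$ with the Gödel code of the successor symbol prefixed, which is precisely the primitive recursive operation denoted by $\gdl{\mathsf s(\dot v)}$. If one prefers a strictly formal argument rather than appealing to the defining clause, a one-line induction on $v$ inside $\PA$ suffices, with base case $\num(\mathsf s(0))=\gdl{\mathsf s(0)}$ and successor step given by the recursion clause. I anticipate no conceptual obstacle; the only genuine work is bookkeeping, namely arranging the coding so that $\num$, $\mathsf{sub}$, and dot-substitution satisfy the clean identifications above on the nose.
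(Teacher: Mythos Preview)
Your proposal is correct and follows essentially the same route as the paper. Both arguments hinge on the single identity $\PA \vdash \num(\mathsf s(v)) = \gdl{\mathsf s(\dot v)}$, which holds because the numeral $\overline{n+1}$ is literally $\mathsf s(\overline n)$; the paper then propagates this through terms and formulas by structural induction, while you package the same induction as the standard compositionality and congruence properties of the substitution function $\mathsf{sub}$.
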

\begin{proof}
This is because our numeral $\overline{n}$ is defined by applying $\mathsf{s}$ to $0$ $n$ times. 
Then the lemma can be proved by induction on the constructions of terms and formulas. 
We give only an outline of a proof. 

For example, we assume that our G\"odel number $\gn(t)$ of a term $t$ is defined so that $\gn(\mathsf{s}(t)) = \langle 0, \gn(t) \rangle$, where $\langle \cdot, \cdot \rangle$ is a primitive recursive paring function. 
Then we can define a primitive recursive function $\num(x)$ calculating $n \mapsto \gn(\overline{n})$ satisfying $\num(\mathsf{s}(x)) = \langle 0, \num(x) \rangle$. 
This is proved in $\PA$ and corresponds to $\gdl{\dot{v}}[\mathsf{s}(x) \slash v] = \gdl{\mathsf{s}(\dot{x})}$. 
Then by using properties of $\gdl{\cdot}$ such as $\PA \vdash \gdl{\mathsf{s}(t)} = \langle 0, \gdl{t} \rangle$, we can show $\PA \vdash \gdl{t(\vec{\dot{y}}, \dot{v})}[\mathsf{s} (x) \slash v] = \gdl{t(\vec{\dot{y}}, \mathsf{s}(\dot{x}))}$ for any term $t(\vec{y}, v)$. 
Then we can prove the lemma by using properties of $\gdl{\cdot}$. 
\end{proof}

\begin{lem}\label{MTL2}
Let $\varphi(\vec{x}, v)$ be any formula. 
If $S \vdash \varphi(\vec{x}, v) \to \Phi(\gdl{\varphi(\vec{\dot{x}}, \dot{v})})$, then $S \vdash \exists v \varphi(\vec{x}, v) \to \Phi(\gdl{\exists v \varphi(\vec{\dot{x}}, v)})$. 
\end{lem}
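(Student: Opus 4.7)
The plan is to apply $\BDU{2}$ to the logically trivial implication $\varphi(\vec{x},v) \to \exists v\,\varphi(\vec{x},v)$, exploiting the fact that on the right-hand side $v$ becomes bound, so that the resulting $\Phi$-consequent no longer depends on $v$ and can absorb an existential quantifier.

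More concretely, I would first rename the bound variable (if necessary) and observe that $T \vdash \forall \vec{x}\,\forall v\,(\varphi(\vec{x},v) \to \exists v\,\varphi(\vec{x},v))$ holds purely by predicate calculus. Applying $\BDU{2}$ with the expanded free-variable list $(\vec{x},v)$ to this implication yields
\[
    S \vdash \forall \vec{x}\,\forall v\,\bigl(\Phi(\gdl{\varphi(\vec{\dot{x}},\dot{v})}) \to \Phi(\gdl{\exists v\,\varphi(\vec{\dot{x}},v)})\bigr).
\]
Here the key observation is that since the quantifier $\exists v$ binds $v$ in $\exists v\,\varphi(\vec{x},v)$, the term $\gdl{\exists v\,\varphi(\vec{\dot{x}},v)}$ depends only on $\vec{x}$; the dotting mechanism only affects free occurrences.

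Combining this with the hypothesis $S \vdash \varphi(\vec{x},v) \to \Phi(\gdl{\varphi(\vec{\dot{x}},\dot{v})})$ by transitivity gives
\[
    S \vdash \varphi(\vec{x},v) \to \Phi(\gdl{\exists v\,\varphi(\vec{\dot{x}},v)}).
\]
Since $v$ is not free in the consequent, ordinary predicate-calculus existential generalization on $v$ yields the desired conclusion $S \vdash \exists v\,\varphi(\vec{x},v) \to \Phi(\gdl{\exists v\,\varphi(\vec{\dot{x}},v)})$.

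There is no serious obstacle; the only thing to verify carefully is the assertion that $\gdl{\exists v\,\varphi(\vec{\dot{x}},v)}$ is genuinely a term in $\vec{x}$ alone, which follows from our conventions on substitution respecting bound variables. The proof relies essentially only on $\BDU{2}$ (the hypothesis of the lemma already encodes the inductive assumption one needs about $\varphi$), which is precisely why the $m=2$ fragment of Buchholz's schema suffices for the existential step in the $\Sigma_1$-completeness induction.
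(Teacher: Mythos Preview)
Your proof is correct and follows essentially the same route as the paper: apply $\BDU{2}$ to the trivial implication $\varphi(\vec{x},v)\to\exists v\,\varphi(\vec{x},v)$, chain with the hypothesis, and then existentially generalize over $v$ using that $v$ is not free in $\Phi(\gdl{\exists v\,\varphi(\vec{\dot{x}},v)})$. Your added remark about the dotting convention respecting bound variables is a helpful clarification the paper leaves implicit.
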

\begin{proof}
Suppose $S \vdash \varphi(\vec{x}, v) \to \Phi(\gdl{\varphi(\vec{\dot{x}}, \dot{v})})$. 
Since $T \vdash \varphi(\vec{x}, v) \to \exists v \varphi(\vec{x}, v)$, we have $S \vdash \Phi(\gdl{\varphi(\vec{\dot{x}}, \dot{v})}) \to \Phi(\gdl{\exists v \varphi(\vec{\dot{x}}, v)})$ by $\BDU{2}$. 
Hence $S \vdash \varphi(\vec{x}, v) \to \Phi(\gdl{\exists v \varphi(\vec{\dot{x}}, v)})$. 
Therefore we conclude $S \vdash \exists v \varphi(\vec{x}, v) \to \Phi(\gdl{\exists v \varphi(\vec{\dot{x}}, v)})$.
\end{proof}

\begin{lem}\label{MTL3}
For any natural number $k$ and any variables $x_0, \ldots, x_k, z_0, \ldots, z_k$, 
\[
	S \vdash \bigwedge_{i \leq k} (z_i = x_i) \to \Phi \left(\gdl{\bigwedge_{i \leq k} (\dot{z}_i = \dot{x}_i)}\right).
\]
\end{lem}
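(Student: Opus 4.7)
The plan is to reduce the statement to the degenerate case where the two tuples coincide, and then invoke uniform necessitation. Since $\D{1}$ and $\BDU{2}$ imply $\DU{1}$ by Corollary \ref{UC1}.1, I may apply $\DU{1}$ to the trivially $T$-provable sentence $\forall \vec{x}\,\bigwedge_{i \leq k}(x_i = x_i)$, obtaining
\[
S \vdash \forall \vec{x}\, \Phi\!\left(\gdl{\bigwedge_{i \leq k}(\dot{x}_i = \dot{x}_i)}\right).
\]

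Next I would observe that $\gdl{\bigwedge_{i \leq k}(\dot{z}_i = \dot{x}_i)}$ is a primitive recursive term in the free variables $\vec{z}, \vec{x}$, assembled from $\num(z_i)$, $\num(x_i)$, and fixed primitive recursive codes for $=$ and $\wedge$. Hence Leibniz's law formalized in $\PA$ yields
\[
\PA \vdash \bigwedge_{i \leq k}(z_i = x_i) \to \gdl{\bigwedge_{i \leq k}(\dot{z}_i = \dot{x}_i)} = \gdl{\bigwedge_{i \leq k}(\dot{x}_i = \dot{x}_i)}.
\]
Rewriting the previous display under the hypothesis $\bigwedge_{i \leq k}(z_i = x_i)$ using this identity gives the desired conclusion inside $S$.

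The only technical point is verifying the substitution identity in the second display; this is the several-variable analogue of the principle underlying Lemma \ref{MTL0} and is established by a routine induction on the construction of the numeral-coding terms, using the properties of $\num$ and the coding of $\wedge$ and $=$. I expect this bookkeeping to be the main (though quite mild) obstacle. Notably no induction on $k$ is required: a single invocation of $\DU{1}$ handles every $k$ uniformly, which is what makes the argument so short.
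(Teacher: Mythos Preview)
Your proposal is correct and follows essentially the same route as the paper: apply $\DU{1}$ to the reflexive identity $\bigwedge_{i \le k}(x_i = x_i)$ and then transfer via the equality axioms. The only cosmetic difference is that the paper carries out the Leibniz step at the formula level (introducing fresh variables $v_i$ so as to substitute cleanly into $\Phi(\gdl{\bigwedge(\dot v_i = \dot z_i)})$), whereas you argue directly at the term level that $\bigwedge(z_i=x_i)$ forces $\gdl{\bigwedge(\dot z_i=\dot x_i)}=\gdl{\bigwedge(\dot x_i=\dot x_i)}$; note that this term identity needs no separate induction---it is an immediate instance of function congruence for the primitive recursive term $\gdl{\bigwedge(\dot z_i=\dot x_i)}$, so the ``bookkeeping'' you flag is even lighter than you suggest.
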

\begin{proof}
Since $T \vdash \bigwedge_{i \leq k} (z_i = z_i)$, we have 
\begin{align}\label{F3}
	S \vdash \Phi \left(\gdl{\bigwedge_{i \leq k} (\dot{z}_i = \dot{z}_i)} \right)
\end{align}
by $\DU{1}$. 
Let $v_0, \ldots, v_k$ be fresh variables. 
By equality axioms of predicate calculus, we have
\[
	\PA \vdash \bigwedge_{i \leq k} (z_i = x_i) \to \left(\Phi \left(\gdl{\bigwedge_{i \leq k} (\dot{v}_i = \dot{z}_i) }\right) \to \Phi \left(\gdl{\bigwedge_{i \leq k} (\dot{v}_i = \dot{x}_i)} \right) \right). 
\]
By substituting $z_i$ for $v_i$, we obtain
\[
	\PA \vdash \bigwedge_{i \leq k} (z_i = x_i) \to \left(\Phi \left(\gdl{\bigwedge_{i \leq k} (\dot{z}_i = \dot{z}_i)} \right) \to \Phi \left(\gdl{\bigwedge_{i \leq k} (\dot{z}_i = \dot{x}_i)} \right) \right). 
\]
By combining this with (\ref{F3}), we now obtain
\[
	S \vdash \bigwedge_{i \leq k} (z_i = x_i) \to \Phi \left(\gdl{\bigwedge_{i \leq k} (\dot{z}_i = \dot{x}_i)} \right). 
\]
\end{proof}

For each term $t(\vec{x})$, let $c(t(\vec{x}))$ be the number of constant and function symbols contained in $t(\vec{x})$. 
We call $c(t(\vec{x}))$ the \textit{complexity} of $t(\vec{x})$.

\begin{lem}\label{MTL4}
For any finite sequence $\{t_i(\vec{x})\}_{i \leq k}$ of terms with $\max_{i \leq k} \{c(t_i(\vec{x}))\} \leq 1$, 
\[
	S \vdash \bigwedge_{i \leq k} (z_i = t_i(\vec{x})) \to \Phi \left(\gdl{\bigwedge_{i \leq k} (\dot{z}_i = t_i(\vec{\dot{x}}))}\right).
\]
\end{lem}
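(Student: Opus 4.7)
The plan is to reduce the lemma to Lemma~\ref{MTL3} via syntactic substitution. Starting from
\[
S \vdash \bigwedge_{i \leq k}(z_i = v_i) \to \Phi\bigl(\gdl{\bigwedge_{i \leq k}(\dot{z}_i = \dot{v}_i)}\bigr)
\]
(with fresh variables $v_0, \ldots, v_k$), I would substitute $v_i := t_i(\vec{x})$ for each $i$. Inside the G\"odel-number term each $v_i$ occurs only within $\num(v_i)$, which after substitution becomes $\num(t_i(\vec{x}))$. When $t_i$ is a variable, is $0$, or is $\mathsf{s}(x_j)$, the identities $\num(x_j) = \num(x_j)$, $\num(0) = \gn(0)$, and Lemma~\ref{MTL0} respectively show that $\num(t_i(\vec{x}))$ coincides syntactically with $\gdl{t_i(\vec{\dot{x}})}$; hence, if all $t_i$ are of these shapes the substituted G\"odel-number term equals the target $\gdl{\bigwedge(\dot{z}_i = t_i(\vec{\dot{x}}))}$ and the conclusion follows at once.

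The main obstacle is the cases $t_i = x_j + x_l$ and $t_i = x_j \times x_l$. Substitution then produces $\num(x_j + x_l)$, the G\"odel number of the numeral $\overline{x_j + x_l}$, whereas the target $\gdl{t_i(\vec{\dot{x}})} = \num(x_j) \dot{+} \num(x_l)$ is the G\"odel number of the compound term $\overline{x_j} + \overline{x_l}$. These are distinct primitive recursive functions of $\vec{x}$, and since the former is not of the form $\gdl{\alpha(\vec{\dot{x}})}$ for any $\mathcal{L}_A$-formula $\alpha$, $\BDU{2}$ cannot be invoked directly to bridge them.

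My plan for the hard cases is a double induction: externally on the number $h$ of hard (i.e., $+$ or $\times$) positions in $\{t_i\}$, and for each external step, internally in $S$ on the right argument of a chosen hard position $t_{i_0} = x_j + x_l$ (the $\times$ case is analogous, using $x \times \mathsf{s}(y) = x \times y + x$ together with the already-proved $+$ result). In the external base $h = 0$ the first-paragraph substitution suffices. In the internal base $x_l = 0$, the equation $z_{i_0} = x_j + 0$ is $\PA$-equivalent to $z_{i_0} = x_j$, so the external IH applied to the sequence with $t_{i_0}$ replaced by the variable $x_j$ (which has $h - 1$ hard positions) gives the result, bridged by Lemma~\ref{MTL1}. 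In the internal step $x_l = \mathsf{s}(y)$, given $z_{i_0} = x_j + \mathsf{s}(y)$, set $w := x_j + y$ so $z_{i_0} = \mathsf{s}(w)$; the internal IH applied with $z_{i_0}$ replaced by $w$ yields $\Phi\bigl(\gdl{\bigwedge_{i \neq i_0}(\dot{z}_i = t_i(\vec{\dot{x}})) \land (\dot{w} = \dot{x}_j + \dot{y})}\bigr)$; $\BDU{2}$ applied to the $\PA$-provable equivalence $(w = x_j + y) \leftrightarrow (\mathsf{s}(w) = x_j + \mathsf{s}(y))$ conjoined with the unchanged equations rewrites this to $\Phi\bigl(\gdl{\bigwedge_{i \neq i_0}(\dot{z}_i = t_i(\vec{\dot{x}})) \land (\mathsf{s}(\dot{w}) = \dot{x}_j + \mathsf{s}(\dot{y}))}\bigr)$; and Lemma~\ref{MTL0} (substituting $\mathsf{s}(w)$ for the free variable $z_{i_0}$ in the target G\"odel-number term) identifies this with the desired $\Phi\bigl(\gdl{\bigwedge(\dot{z}_i = t_i(\vec{\dot{x}}))}\bigr)$ under $z_{i_0} = \mathsf{s}(w)$. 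The chief technical difficulty throughout is the $+/\times$ mismatch, which the inner induction in $S$ is designed to circumvent.
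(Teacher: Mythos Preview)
Your proposal is correct and follows essentially the same architecture as the paper's proof: an outer meta-induction on the number of ``hard'' terms, reducing to Lemma~\ref{MTL3}, with an inner induction in $S$ on one argument of the hard operation, driven by Lemma~\ref{MTL0} and Lemma~\ref{MTL1}/$\BDU{2}$. The organizational differences are minor: you absorb $0$ and $\mathsf{s}$ into the base case by simultaneous substitution (the paper peels them off one at a time), and in the $+$ step you push the successor into the \emph{result} variable $z_{i_0}$ via $z_{i_0}=\mathsf{s}(w)\wedge w=x_j+y$, whereas the paper pushes it into the \emph{first argument} via $\mathsf{s}(x)+y=x+\mathsf{s}(y)$. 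Both routes are valid applications of the same MTL0/MTL1 mechanism.

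One point to tighten: your $\times$ case is too compressed. The phrase ``using $x\times\mathsf{s}(y)=x\times y+x$ together with the already-proved $+$ result'' does not quite work as stated, because in your single external induction on $h$ the $+$ instance you would need (replacing the $\times$ conjunct by a $+$ conjunct) still has $h$ hard positions, so it is not covered by the external IH. The paper handles this by a \emph{nested} internal induction (introducing an auxiliary summand $w$ and inducting again to absorb the ``$+\,x$''), and your sketch needs either that nested induction or a restructured external induction that treats all $+$'s before any $\times$'s. This is a matter of filling in detail rather than a flaw in the approach.
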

\begin{proof}
We prove by induction on the number $m$ of terms of complexity $1$ in such sequences. 
If a sequence does not contain terms of complexity $1$, then it consists of variables, and hence the lemma holds for the sequence by Lemma \ref{MTL3}. 

Suppose that the lemma holds for such sequences with exactly $m$ terms of complexity $1$. 
Let $\{t_i(\vec{x})\}_{i \leq k}$ be any finite sequence consists of terms of complexity less than or equal to $1$ and having exactly $m+1$ terms of complexity $1$. 
We may assume that $c(t_k) = 1$. 
Let $\xi(\vec{v}) : \equiv \bigwedge_{i < k}(z_i = t_i(\vec{x}))$. 
We distinguish the following four cases. 

Case 1: $t_k(\vec{x})$ is $0$. 
Then by induction hypothesis, 
\[
	S \vdash \xi(\vec{v}) \land z_k = y \to \Phi(\gdl{\xi(\vec{\dot{v}}) \land \dot{z}_k = \dot{y}}).
\]
By substituting $0$ for $y$, we obtain
\[
	S \vdash \xi(\vec{v}) \land z_k = 0 \to \Phi(\gdl{\xi(\vec{\dot{v}}) \land \dot{z}_k = \dot{y}})[0 \slash y].
\]
Since $0$ is a numeral, we have
\[
	S \vdash \xi(\vec{v}) \land z_k = 0 \to \Phi(\gdl{\xi(\vec{\dot{v}}) \land \dot{z}_k = 0}).
\]

Case 2: $t_k(\vec{x})$ is $\mathsf{s}(x)$. 
By induction hypothesis, 
\[
	S \vdash \xi(\vec{v}) \land z_k = y \to \Phi(\gdl{\xi(\vec{\dot{v}}) \land \dot{z}_k = \dot{y}}).
\]
By substituting $\mathsf{s}(x)$ for $y$, we obtain
\[
	S \vdash \xi(\vec{v}) \land z_k = \mathsf{s}(x) \to \Phi(\gdl{\xi(\vec{\dot{v}}) \land \dot{z}_k = \dot{y}})[\mathsf{s}(x) \slash y].
\]
By Lemma \ref{MTL0}, we conclude
\[
	S \vdash \xi(\vec{v}) \land z_k = \mathsf{s}(x) \to \Phi(\gdl{\xi(\vec{\dot{v}}) \land \dot{z}_k = \mathsf{s}(\dot{x})}).
\]

Case 3: $t_k(\vec{x})$ is $x + y$. 
Let $\varphi(y)$ be the formula 
\[
	\forall x(\xi(\vec{v}) \land z_k = x + y \to \Phi(\gdl{\xi(\vec{\dot{v}}) \land \dot{z}_k = \dot{x} + \dot{y}})). 
\]
By induction hypothesis, 
\[
	S \vdash \xi(\vec{v}) \land z_k = x \to \Phi(\gdl{\xi(\vec{\dot{v}}) \land \dot{z}_k = \dot{x}}).
\]
Since $\PA \vdash x = x + 0$, we have $\PA \vdash (\xi(\vec{v}) \land z_k = x) \leftrightarrow (\xi(\vec{v}) \land z_k = x + 0)$. 
Then by Lemma \ref{MTL1}, 
\[
	S \vdash \xi(\vec{v}) \land z_k = x + 0 \to \Phi(\gdl{\xi(\vec{\dot{v}}) \land \dot{z}_k = \dot{x} + 0}). 
\]
This means $S \vdash \varphi(0)$. 

By Lemma \ref{MTL0}, we get
\[
	\PA \vdash \varphi(y) \land \xi(\vec{v}) \land z_k = \mathsf{s}(x) + y \to \Phi(\gdl{\xi(\vec{\dot{v}}) \land \dot{z}_k = \mathsf{s}(\dot{x}) + \dot{y}}). 
\]
Since $\PA \vdash \mathsf{s}(x) + y = x + \mathsf{s}(y)$, we obtain
\[
	S \vdash \varphi(y) \land \xi(\vec{v}) \land z_k = x + \mathsf{s}(y) \to \Phi(\gdl{\xi(\vec{\dot{v}}) \land \dot{z}_k = \dot{x} + \mathsf{s}(\dot{y})}). 
\]
by Lemma \ref{MTL1}. 
Then $S \vdash \varphi(y) \to \varphi(\mathsf{s}(y))$. 
By induction axiom, we conclude $S \vdash \forall y \varphi(y)$. 

Case 4: $t_k(\vec{x})$ is $x \times y$. 
Let $\psi(y)$ be the formula 
\[
	\forall w(\xi(\vec{v}) \land z_k = x \times y + w \to \Phi(\gdl{\xi(\vec{\dot{v}}) \land \dot{z}_k = \dot{x} \times \dot{y} + \dot{w}})). 
\]
By induction hypothesis, 
\[
	S \vdash \xi(\vec{v}) \land z_k = w \to \Phi(\gdl{\xi(\vec{\dot{v}}) \land \dot{z}_k = \dot{w}}).
\]
Since $\PA \vdash w = x \times 0 + w$, we have
\[
	S \vdash \xi(\vec{v}) \land z_k = x \times 0 + w \to \Phi(\gdl{\xi(\vec{\dot{v}}) \land \dot{z}_k = \dot{x} \times 0 + \dot{w}})
\]
by Lemma \ref{MTL1}. 
Therefore $S \vdash \psi(0)$. 

Let $\rho(w)$ be the formula
\[
	\forall u(\xi(\vec{v}) \land z_k = x \times y + (u + w) \to \Phi(\gdl{\xi(\vec{\dot{v}}) \land \dot{z}_k = \dot{x} \times \dot{y} + (\dot{u} + \dot{w})})). 
\]
Then as in Case 3, we can prove $S \vdash \psi(y) \to \rho(0)$ and $S \vdash \rho(w) \to \rho(\mathsf{s}(w))$. 
Hence $S \vdash \psi(y) \to \forall w \rho(w)$. 
Then 
\[
	S \vdash \psi(y) \land \xi(\vec{v}) \land z_k = x \times y + (x + w) \to \Phi(\gdl{\xi(\vec{\dot{v}}) \land \dot{z}_k = \dot{x} \times \dot{y} + (\dot{x} + \dot{w})}). 
\]
Since $\PA \vdash x \times y + (x + w) = x \times \mathsf{s}(y) + w$, we get
\[
	S \vdash \psi(y) \land \xi(\vec{v}) \land z_k = x \times \mathsf{s}(y) + w \to \Phi(\gdl{\xi(\vec{\dot{v}}) \land \dot{z}_k = \dot{x} \times \mathsf{s}(\dot{y}) + \dot{w}}) 
\] 
by Lemma \ref{MTL1}. 
Thus $S \vdash \psi(y) \to \psi(\mathsf{s}(y))$, and hence $S \vdash \forall y \psi(y)$. 
By substituting $0$ for $w$ in $\psi(y)$, we obtain
\[
	S \vdash \xi(\vec{v}) \land z_k = x \times y + 0 \to \Phi(\gdl{\xi(\vec{\dot{v}}) \land \dot{z}_k = \dot{x} \times \dot{y} + 0}). 
\]
Then the required conclusion follows from Lemma \ref{MTL1}. 
\end{proof}

\begin{lem}\label{MTL5}
For any finite sequence $\{t_i(\vec{x})\}_{i \leq k}$ of terms, 
\[
	S \vdash \bigwedge_{i \leq k} (z_i = t_i(\vec{x})) \to \Phi \left(\gdl{\bigwedge_{i \leq k} (\dot{z}_i = t_i(\vec{\dot{x}}))}\right).
\]
\end{lem}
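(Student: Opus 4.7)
The plan is to reduce the general case to Lemma~\ref{MTL4} by ``flattening'' complex subterms via fresh existentially bound variables. I would proceed by induction on the \emph{excess complexity} $E(\vec{t}) = \sum_{i \leq k} \max(c(t_i(\vec{x})) - 1, 0)$ of the sequence. The base case $E(\vec{t}) = 0$ is immediate: every $t_i$ has complexity at most $1$, so Lemma~\ref{MTL4} applies.

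For the inductive step, suppose $E(\vec{t}) > 0$ and pick, say, $t_k$ with $c(t_k) \geq 2$. By inspecting the outermost symbol of $t_k$ (one of $\mathsf{s}$, $+$, $\times$), I can single out an immediate non-variable subterm $s(\vec{x})$ of $t_k$ and write $t_k(\vec{x}) = u(\vec{x}, w)[s(\vec{x})/w]$, where $u(\vec{x}, w)$ is obtained from $t_k$ by replacing the chosen occurrence of $s$ with a fresh variable $w$. A direct complexity count shows that the enlarged sequence $(t_0(\vec{x}), \ldots, t_{k-1}(\vec{x}), u(\vec{x}, w), s(\vec{x}))$ in the parameters $(\vec{x}, w)$, paired with left-hand variables $(z_0, \ldots, z_k, w)$, has strictly smaller excess complexity, so the induction hypothesis applies to it.

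Applying the induction hypothesis gives
\[
S \vdash \bigwedge_{i < k}(z_i = t_i(\vec{x})) \land z_k = u(\vec{x}, w) \land w = s(\vec{x}) \to \Phi\bigl(\gdl{\bigwedge_{i < k}(\dot{z}_i = t_i(\vec{\dot{x}})) \land \dot{z}_k = u(\vec{\dot{x}}, \dot{w}) \land \dot{w} = s(\vec{\dot{x}})}\bigr).
\]
Lemma~\ref{MTL2} then existentially binds the variable $w$, and a final invocation of Lemma~\ref{MTL1} with the $\PA$-provable equivalence
\[
\bigwedge_{i \leq k}(z_i = t_i(\vec{x})) \leftrightarrow \exists w \Bigl(\bigwedge_{i < k}(z_i = t_i(\vec{x})) \land z_k = u(\vec{x}, w) \land w = s(\vec{x})\Bigr)
\]
yields the conclusion. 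The main obstacle is the case analysis on the outermost symbol of $t_k$ required to exhibit the decomposition and to verify that the excess measure strictly decreases in each subcase; once that bookkeeping is set up, the derivation itself is a straightforward chain of Lemmas~\ref{MTL4}, \ref{MTL2}, and~\ref{MTL1}.
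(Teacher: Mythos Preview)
Your argument is correct and follows the same core idea as the paper: flatten a complex term by introducing a fresh variable for a subterm, apply the induction hypothesis to the enlarged conjunction, existentially bind the auxiliary variable via Lemma~\ref{MTL2}, and transfer back via Lemma~\ref{MTL1} using the obvious $\PA$-equivalence.

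The only difference is in the induction bookkeeping. The paper uses a nested induction---the outer one on $\max_{i\leq k} c(t_i)$, the inner one on the number of terms attaining that maximum---and at each step peels off the \emph{outermost} function symbol of a chosen $t_k$, so that, e.g., $t_k = \mathsf{s}(t')$ is replaced by the pair $\mathsf{s}(w)$ and $t'$. You instead run a single induction on the aggregate excess $E(\vec{t}) = \sum_i \max(c(t_i)-1,0)$ and extract one non-variable immediate subterm at a time. Your measure is arguably cleaner (a single well-founded parameter, and the verification that it drops is the one-line calculation $c(t_k)-1 \mapsto (c(u)-1)^+ + (c(s)-1)^+ = c(t_k)-2$), while the paper's double induction makes the reduction to Lemma~\ref{MTL4} slightly more transparent at each stage. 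Either way the substantive work is identical.
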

\begin{proof}
We prove by induction on $\max_{i \leq k} \{c(t_i(\vec{x}))\}$. 
If $\max_{i \leq k} \{c(t_i(\vec{x}))\} \leq 1$, then the lemma follows from Lemma \ref{MTL4}. 

Suppose that the lemma holds for every finite sequence $\{t_i(\vec{x})\}_{i \leq k}$ of terms with $\max_{i \leq k} \{c(t_i(\vec{x}))\} = n \geq 1$. 
Then we show that the lemma holds for all finite sequences $\{t_i(\vec{x})\}_{i \leq k}$ containing only terms of complexity less than or equal to $n+1$. 

As in our proof of Lemma \ref{MTL4}, this is proved by induction on the number $m$ of terms of complexity $n+1$ in such sequences. 
If $m = 0$, then the lemma follows from induction hypothesis. 
Then assume that the lemma holds for such sequences with exactly $m$ terms of complexity $n+1$. 

Let $\{t_i\}_{i \leq k}$ be any finite sequence consists of terms of complexity less than or equal to $n+1$ and having exactly $m+1$ terms of complexity $n+1$. 
We may assume that $c(t_k) = n+1$. 
Let $\xi(\vec{v}) : \equiv \bigwedge_{i < k}(z_i = t_i(\vec{x}))$. 
We give only a proof of the case that $t_k(\vec{x})$ is $\mathsf{s}(t'(\vec{x}))$ for some term $t'(\vec{x})$ of complexity $n$. 
Other cases are proved in a similar way. 

Notice that $c(\mathsf{s}(w)) = 1 \leq n$ and $c(t'(\vec{x})) = n$. 
Then by induction hypothesis, 
\[
	S \vdash \xi(\vec{v}) \land z_k = \mathsf{s}(w) \land w = t'(\vec{x}) \to \Phi(\gdl{\xi(\vec{\dot{v}}) \land \dot{z}_k = \mathsf{s}(\dot{w}) \land \dot{w} = t'(\vec{\dot{x}})}).
\]
Since $\PA \vdash \exists w(\xi(\vec{v}) \land z_k = \mathsf{s}(w) \land w = t'(\vec{x})) \leftrightarrow (\xi(\vec{v}) \land z_k = \mathsf{s}(t'(\vec{x})))$, 
we obtain 
\[
	S \vdash \xi(\vec{v}) \land x_k = \mathsf{s}(t'(\vec{x})) \to \Phi(\gdl{\xi(\vec{\dot{v}}) \land \dot{z}_k = \mathsf{s}(t'(\vec{\dot{x}}))}) 
\]
by Lemmas \ref{MTL3} and \ref{MTL1}. 
\end{proof}

Notice that each atomic formula $t_0 = t_1$ is equivalent to $\exists z (z = t_0 \land z = t_1)$, and each negated atomic formula $t_0 \neq t_1$ is $\PA$-equivalent to $\exists z_0 \exists z_1 (t_0 + \mathsf{s}(z_0) = t_1 \lor t_1 + \mathsf{s}(z_1) = t_0)$. 
Then we obtain the following lemma. 

\begin{lem}\label{MTL}
For any quantifier-free formula $\xi(\vec{x})$, there exists a quantifier-free formula $\delta(\vec{x}, \vec{y})$ satisfying the following conditions: 
\begin{enumerate}
	\item $\PA \vdash \forall \vec{x}(\xi(\vec{x}) \leftrightarrow \exists \vec{y} \delta(\vec{x}, \vec{y}))$. 
	\item $\delta(\vec{x}, \vec{y})$ is of the form $\delta_0(\vec{x}, \vec{y}) \lor \cdots \lor \delta_{k}(\vec{x}, \vec{y})$ and each disjunct $\delta_i(\vec{x}, \vec{y})$ is of the form 
\[
	\bigwedge_{j \leq l_i} \left(z_{i, j} = t_{i, j}(\vec{x}, \vec{y}) \right)
\]
for some terms $t_{i, 0}(\vec{x}, \vec{y}), \ldots, t_{i, l_i}(\vec{x}, \vec{y})$ and variables $z_{i, 0}, \ldots, z_{i, l_i} \in \vec{x}, \vec{y}$. 
\end{enumerate}
\end{lem}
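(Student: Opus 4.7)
The plan is to reduce $\xi$ to the desired form by a chain of $\PA$-provable equivalence rewritings. First, I would put $\xi(\vec{x})$ into disjunctive normal form: $\xi(\vec{x}) \equiv \bigvee_{i} \bigwedge_{j} L_{i,j}$, where each $L_{i,j}$ is either atomic ($s = s'$) or negated atomic ($s \neq s'$). This is classical propositional manipulation and preserves $\PA$-provable equivalence.

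Next, I would eliminate literals that are not yet in the required ``variable $=$ term'' shape, using the equivalences already noted just before the lemma. Each positive literal $s = s'$ is rewritten as $\exists z(z = s \land z = s')$ with $z$ fresh. For each negative literal $s \neq s'$, I would first use the hint to write it as $\exists z_0 \exists z_1 \bigl((s + \mathsf{s}(z_0) = s') \lor (s' + \mathsf{s}(z_1) = s)\bigr)$. Each of these two disjuncts still has a compound left-hand side, so I would apply the positive trick once more to split the equation using another fresh variable --- for instance $s + \mathsf{s}(z_0) = s'$ becomes $\exists w\,(w = s + \mathsf{s}(z_0) \land w = s')$, and similarly for the other disjunct. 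After this step, every remaining atomic equation has a variable on the left and a term on the right.

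Finally, I would pull every existential quantifier to the front --- choosing all witnessing variables pairwise distinct makes this trivially $\PA$-valid --- and then distribute conjunctions over disjunctions to restore a disjunction of conjunctions of such equations. Collecting the prefixed variables into $\vec{y}$ yields a quantifier-free $\delta(\vec{x}, \vec{y})$ of the required disjunctive form, with $\PA \vdash \forall \vec{x}(\xi(\vec{x}) \leftrightarrow \exists \vec{y}\, \delta(\vec{x}, \vec{y}))$.

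The only delicate point I expect is that the equivalence stated in the hint does not by itself put negated literals into the prescribed form, since it leaves equations with a compound left-hand side; the small extra step of introducing a further fresh variable $w$ to split such an equation into two equations of the shape ``variable $=$ term'' is the key bit of bookkeeping that makes the normalization go through. Beyond that, the argument is routine propositional shuffling and quantifier pulling, and I do not anticipate any substantive technical obstacle.
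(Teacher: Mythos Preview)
Your proposal is correct and follows exactly the route the paper indicates: the paper's ``proof'' is just the two-line remark preceding the lemma (rewrite $t_0 = t_1$ as $\exists z(z = t_0 \land z = t_1)$ and $t_0 \neq t_1$ as $\exists z_0 \exists z_1 (t_0 + \mathsf{s}(z_0) = t_1 \lor t_1 + \mathsf{s}(z_1) = t_0)$), leaving the DNF manipulation, quantifier pulling, and redistribution implicit. Your observation that the negated-atomic rewrite still leaves compound left-hand sides and requires one more application of the positive-literal trick is a genuine detail the paper glosses over, so your write-up is actually more careful than the original.
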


Also in our proof of Theorem \ref{MT}, we use the following $\PA$-provable form of the MRDP theorem. 

\begin{thm}[The MRDP theorem (see \cite{Kay91})]\label{MRDP}
For any $\Sigma_1$ formula $\varphi(\vec{x})$, there exists a quantifier-free formula $\delta(\vec{x}, \vec{y})$ such that $\PA \vdash \forall \vec{x}(\varphi(\vec{x}) \leftrightarrow \exists \vec{y} \delta(\vec{x}, \vec{y}))$. 
\end{thm}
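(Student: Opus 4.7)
The plan is to establish this by structural induction on $\Sigma_1$ formulas, reducing each case to a diophantine form (existential prenex with a quantifier-free matrix built from polynomial equations). First I handle the base of the induction: atomic equalities $t_0 = t_1$ are already of the required form, while atomic inequalities $t_0 < t_1$ and negated equalities $t_0 \neq t_1$ are $\PA$-provably equivalent to $\exists z(t_0 + \mathsf{s}(z) = t_1)$ and $\exists z_0 \exists z_1(t_0 + \mathsf{s}(z_0) = t_1 \lor t_1 + \mathsf{s}(z_1) = t_0)$, respectively. Conjunctions and disjunctions of diophantine formulas are again diophantine by pulling the existential quantifiers to the front and, for conjunction, combining equations $p = 0$ and $q = 0$ into $p^2 + q^2 = 0$; unbounded existential quantifiers are absorbed trivially.

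The remaining and decisive case is the bounded universal quantifier $\forall z \leq u\, \exists \vec{y}\, \delta(z, \vec{y}, \vec{x})$, which must be brought into existential diophantine form. The standard strategy is to code the sequence of witnesses $\vec{y}_0, \ldots, \vec{y}_u$ by a single natural number using a diophantine pairing/sequence coding such as G\"odel's $\beta$-function or the Cantor pairing function, and then to assert the existence of such a code together with the conjunction, over $z \leq u$, of the decoded instance of $\delta$. To turn this conjunction (of variable length $u+1$) into a diophantine condition, one needs a diophantine description of functions growing at least exponentially — concretely, a diophantine graph of $y = x^z$ (or of binomial coefficients, or of factorials). This is the heart of Matiyasevich's theorem and will be the main obstacle. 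The approach I would follow is the classical one via the Pell equation $x^2 - (a^2 - 1)y^2 = 1$: its solutions $(x_n, y_n)$ satisfy a second-order linear recurrence, one shows diophantinely that $y$ is the $n$-th Pell solution for parameter $a$, and from suitable congruences between Pell solutions and powers of $a$ one extracts a diophantine definition of exponentiation.

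With exponentiation diophantine, a Gödel-$\beta$-style coding yields a diophantine predicate expressing ``$w$ codes the sequence $\vec{y}_0, \ldots, \vec{y}_u$ of witnesses,'' and the bounded universal quantifier becomes $\exists w\, \forall z \leq u\, \delta(z, \beta(w,z), \vec{x})$. A further elimination of the remaining bounded universal quantifier — again via sequence coding of the polynomial values involved, as in the Davis--Putnam--Robinson reduction — produces the final diophantine formula. Throughout, the whole argument must be carried out inside $\PA$: every divisibility lemma, every induction on Pell recurrences, and every congruence used in the encoding of exponentiation must be $\PA$-provable. Since $\PA$ proves the elementary theory of the Pell equation, the $\beta$-function, and Lucas-type congruences, the formalization goes through; the detailed verification is given in Kaye~\cite{Kay91}, and the main obstacle is precisely this formalization of Matiyasevich's diophantine definition of exponentiation within $\PA$.
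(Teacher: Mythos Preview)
The paper does not give its own proof of this theorem: it is stated with a citation to Kaye's textbook and used as a black box in the proof of Theorem~\ref{MT}. So there is no ``paper's proof'' to compare against; your sketch is essentially the standard Davis--Putnam--Robinson--Matiyasevich argument that one finds in Kaye, and as an outline it is correct.

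One point worth tightening is your treatment of the bounded universal quantifier. As written, you code the sequence of witnesses via a $\beta$-function, obtain $\exists w\,\forall z \leq u\,\delta(z,\beta(w,z),\vec{x})$, and then say a ``further elimination'' handles the residual bounded $\forall$. This is where the real content of the Davis--Putnam--Robinson bounded-quantifier theorem lies, and it is not just another round of sequence coding: one needs a single diophantine condition on $w$, $u$, $\vec{x}$ and auxiliary parameters that simultaneously forces $\delta$ to hold at every $z \leq u$, typically via Chinese-remainder style coding together with the diophantine availability of binomial coefficients or exponentiation. Your sketch correctly locates exponentiation (via the Pell equation) as the key ingredient, but the passage from ``exponentiation is diophantine'' to ``bounded universal quantifiers can be eliminated'' deserves an explicit lemma rather than a wave of the hand, since a naive attempt does indeed look circular. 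With that gap filled --- and with the observation that all the relevant Pell-recurrence inductions and congruences are $\Pi_1$ or $\Sigma_1$ facts provable already in $I\Sigma_1$, hence in $\PA$ --- your plan goes through.
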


\begin{proof}[Proof of Theorem \ref{MT}]
Let $\sigma(\vec{x})$ be any $\Sigma_1$ formula. 
We would like to prove $S \vdash \forall \vec{x}(\sigma(\vec{x}) \to \Phi(\gdl{\sigma(\vec{\dot{x}})}))$. 
By the MRDP theorem (Theorem \ref{MRDP}), there exists a quantifier-free formula $\delta(\vec{x}, \vec{y})$ such that $\PA \vdash \forall \vec{x}(\sigma(\vec{x}) \leftrightarrow \exists \vec{y} \delta(\vec{x}, \vec{y}))$. 
By Lemma \ref{MTL}, we may assume that $\delta(\vec{x}, \vec{y})$ is of the form indicated in the statement of Lemma \ref{MTL}. 
For each $i \leq k$, by Lemma \ref{MTL5}, we obtain
\[
	S \vdash \bigwedge_{j \leq l_i} (z_{i, j} = t_{i, j}(\vec{x}, \vec{y})) \to \Phi \left(\gdl{\bigwedge_{j \leq l_i} (\dot{z}_{i, j} = t_{i, j}(\vec{\dot{x}}, \vec{\dot{y}}))} \right). 
\]
This means
\begin{align}\label{F4}
	S \vdash \delta_i(\vec{x}, \vec{y}) \to \Phi(\gdl{\delta_i(\vec{\dot{x}}, \vec{\dot{y}})}). 
\end{align}
Since $\PA \vdash \delta_i(\vec{x}, \vec{y}) \to \delta(\vec{x}, \vec{y})$, $S \vdash \Phi(\gdl{\delta_i(\vec{\dot{x}}, \vec{\dot{y}})}) \to \Phi(\gdl{\delta(\vec{\dot{x}}, \vec{\dot{y}})})$ by $\BDU{2}$. 
Therefore by (\ref{F4}), $S \vdash \delta_i(\vec{x}, \vec{y}) \to \Phi(\gdl{\delta(\vec{\dot{x}}, \vec{\dot{y}})})$. 
Since $i \leq k$ is arbitrary, we have $S \vdash \delta_0(\vec{x}, \vec{y}) \lor \cdots \lor \delta_k(\vec{x}, \vec{y}) \to \Phi(\gdl{\delta(\vec{\dot{x}}, \vec{\dot{y}})})$. 
It follows $S \vdash \delta(\vec{x}, \vec{y}) \to \Phi(\gdl{\delta(\vec{\dot{x}}, \vec{\dot{y}})})$. 
By Lemmas \ref{MTL3} and \ref{MTL1}, we conclude $S \vdash \sigma(\vec{x}) \to  \Phi(\gdl{\sigma(\vec{\dot{x}})})$. 
\end{proof}

\section{Witnesses for non-implications}\label{Sec:W}

In this section, we exhibit examples of formulas $\Phi(x)$ satisfying and not satisfying certain conditions. 
From these examples, several non-implications between conditions are concluded. 

Our first two propositions give examples of formulas which do not satisfy $\D{1}$. 
Proofs are easy and we omit them. 

\begin{prop}\label{exQ}
Let $\PR_\mathsf{Q}(x)$ be the provability predicate of Robinson's arithmetic $\mathsf{Q}$. 
\begin{enumerate}
	\item $\PR_\mathsf{Q}(x)$ satisfies $\DG{2}$, $\SCG$, $\CB$ and $\PCG$. 
	\item $\PR_\mathsf{Q}(x)$ satisfies neither $\D{1}$ nor $\BD{2}$. 
	\item $\PA \vdash \mathsf{Con}_{\PR_\mathsf{Q}}^H$. 
\end{enumerate}
\end{prop}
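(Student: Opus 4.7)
The plan is to handle the three clauses separately, each reducing to classical facts about the provability predicate of $\mathsf{Q}$ combined with the general machinery of Section~\ref{Sec:DC}.

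\textbf{Clause 1.} I would arithmetize inside $\PA$ four standard observations about a Feferman-style proof predicate for $\mathsf{Q}$. For $\DG{2}$: given codes of $\mathsf{Q}$-proofs of $\varphi \to \psi$ and of $\varphi$, concatenation plus one modus ponens step yields a $\mathsf{Q}$-proof of $\psi$. For $\CB$: from a $\mathsf{Q}$-proof of $\forall \vec{x}\,\varphi(\vec{x})$, appending a universal-instantiation step with the numerals $\vec{\dot{x}}$ produces, uniformly and primitive recursively in $\vec{x}$, a $\mathsf{Q}$-proof of $\varphi(\vec{\dot{x}})$. For $\PCG$: every proof in pure predicate calculus is already a $\mathsf{Q}$-proof, which $\PA$ verifies by induction on proof length. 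For $\SCG$: this is the classical formalized $\Sigma_1$-completeness of $\mathsf{Q}$, stated in this form in, e.g., \cite{HP93}.

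\textbf{Clause 2.} Pick any sentence $\varphi$ with $T \vdash \varphi$ but $\mathsf{Q} \nvdash \varphi$ (for instance, a nontrivial induction instance). Then $\PR_\mathsf{Q}(\gdl{\varphi})$ is a false $\Sigma_1$ sentence, and since $S$ is a consistent, hence $\Sigma_1$-sound, extension of $\PA$, $S \nvdash \PR_\mathsf{Q}(\gdl{\varphi})$. This refutes $\D{1}$. For $\BD{2}$: clause~1 gives $\SCG$, which immediately yields $\DC$; if $\BD{2}$ also held, Proposition~\ref{LP1}.2 would deliver $\D{1}$, contradicting what was just shown.

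\textbf{Clause 3.} The standard fact that $\PA$ proves the consistency of $\mathsf{Q}$---for instance via a partial truth predicate covering its $\Pi_2$ axioms---gives $\PA \vdash \mathsf{Con}^L_{\PR_\mathsf{Q}}$. Propositions~\ref{LP2}.2 and~\ref{LP2}.3 provide $\PA \vdash \mathsf{Con}^L \to \mathsf{Con}^{\Sigma_1} \to \mathsf{Con}^G$, while clause~1 together with Proposition~\ref{GP2}.1 yields $\PA \vdash \mathsf{Con}^G_{\PR_\mathsf{Q}} \to \mathsf{Con}^H_{\PR_\mathsf{Q}}$. Chaining these implications delivers $\PA \vdash \mathsf{Con}^H_{\PR_\mathsf{Q}}$. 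The only nontrivial step is the arithmetization underlying clause~1, and this is entirely routine for any reasonable coding of proofs.
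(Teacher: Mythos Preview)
Your handling of Clauses~1 and~3 is fine, as is the reduction of $\neg\BD{2}$ to $\neg\D{1}$ via Proposition~\ref{LP1}.2. The paper omits the proof, so there is nothing to compare to on approach.

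There is, however, a genuine gap in your argument for the failure of $\D{1}$. You write that ``$S$ is a consistent, hence $\Sigma_1$-sound, extension of $\PA$''. This inference is false: $\PA + \neg\mathsf{Con}_{\PA}$ is a consistent recursively axiomatized extension of $\PA$ (by the second incompleteness theorem) that proves the false $\Sigma_1$ sentence $\PR_\PA(\gdl{0\neq 0})$. Nothing in the paper's standing hypotheses rules out such an $S$. So from $\mathsf{Q}\nvdash\varphi$ alone you cannot conclude $S\nvdash\PR_\mathsf{Q}(\gdl{\varphi})$.

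Two clean repairs are available. First, you can pick $\varphi$ so that $\PA$ outright refutes $\PR_\mathsf{Q}(\gdl{\varphi})$: for example, take $\varphi$ to be $\forall x\,(0+x=x)$, and argue that $\PA$ defines a model of $\mathsf{Q}$ in which $\varphi$ fails and proves soundness of $\mathsf{Q}$ for that model; then $\PA\vdash\neg\PR_\mathsf{Q}(\gdl{\varphi})$, and consistency of $S$ suffices. Second, and more in the spirit of the paper, simply derive $\neg\D{1}$ from Clauses~1 and~3: if $\PR_\mathsf{Q}$ satisfied $\D{1}$, then since it is $\Sigma_1$ and satisfies $\SC$ (from $\SCG$), Theorem~\ref{Jer} would give $T\nvdash\mathsf{Con}^H_{\PR_\mathsf{Q}}$, contradicting $\PA\vdash\mathsf{Con}^H_{\PR_\mathsf{Q}}$ together with $T\supseteq\PA$.
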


\begin{prop}\label{exN}
Let $\Psi(x) : \equiv x \neq x$. 
\begin{enumerate}
	\item $\Psi(x)$ satisfies $\DG{2}$, $\DG{3}$, $\BDU{2}$ and $\CB$. 
	\item $\Psi(x)$ does not satisfy any of $\D{1}$, $\DC$ and $\PC$. 
	\item $\PA \vdash \mathsf{Con}_{\Psi}^H$. 
\end{enumerate}
\end{prop}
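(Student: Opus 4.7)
The plan rests on one observation: $\Psi(x) \equiv x \neq x$ is refutable in predicate calculus for every instantiation, so $\PA \vdash \forall x\, \neg \Psi(x)$. This single fact drives everything.

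For clause 1, the approach is to note that in each of $\DG{2}$, $\DG{3}$, $\BDU{2}$ and $\CB$, the formula $\Psi$ occurs in a position that can be made trivially true by the universal refutability of $\Psi$. Concretely, $\DG{2}$ and $\DG{3}$ and $\CB$ are of the form $\cdots \to (\Psi(\cdot) \to \cdots)$, so since $S \vdash \neg \Psi(t)$ for any closed term $t$, the implication holds trivially. For $\BDU{2}$, regardless of the hypothesis, the conclusion $S \vdash \forall \vec{x}(\Psi(\gdl{\varphi(\vec{\dot{x}})}) \to \Psi(\gdl{\psi(\vec{\dot{x}})}))$ holds because its antecedent is refutable inside $S$. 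No deeper analysis is needed.

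For clause 2, the plan is to exhibit a single counterexample sentence for each condition, namely $\varphi \equiv 0 = 0$. First I would note that $T \vdash 0=0$ but $S \nvdash \Psi(\gdl{0=0})$, since $\gdl{0=0}$ is a specific numeral $\overline{n}$ and $S \vdash \overline{n} = \overline{n}$; consistency of $S$ then rules out $S \vdash \Psi(\overline{n})$. This refutes $\D{1}$. The same witness refutes $\DC$ (since $S \vdash 0=0$ and $S \nvdash \Psi(\gdl{0=0})$) and $\PC$ (since $S \vdash \PRL(\gdl{0=0})$ by $\D{1}$ for the underlying predicate-calculus provability predicate, while again $S \nvdash \Psi(\gdl{0=0})$).

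For clause 3, the statement $\Con_\Psi^H \equiv \forall x(\Fml(x) \land \Psi(x) \to \neg \Psi(\dot{\neg}x))$ is an instance of $\forall x(\cdots \land \Psi(x) \to \cdots)$, so $\PA \vdash \forall x\, \neg \Psi(x)$ gives $\PA \vdash \Con_\Psi^H$ immediately. No obstacle is expected; the only thing to be careful about is making sure the fixed representation of $\Psi$ is literally the formula $x \neq x$ so that $\PA$-refutability of each instance is automatic rather than something requiring argument.
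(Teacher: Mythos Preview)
Your proposal is correct. The paper does not give a proof of this proposition at all (it states ``Proofs are easy and we omit them'' for both Proposition~\ref{exQ} and Proposition~\ref{exN}), and your argument is exactly the natural one: the universal refutability $\PA \vdash \forall x\,\neg\Psi(x)$ makes every condition with a $\Psi$-antecedent vacuously true, while the single witness $0=0$ disposes of $\D{1}$, $\DC$, and $\PC$.
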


Feferman \cite{Fef60} proved there exists a $\Pi_1$ numeration $\pi(v)$ of $T$ in $T$ such that $\mathsf{Con}_{\PR_\pi}^H$ is provable in $\PA$. 

\begin{fact}[Feferman \cite{Fef60}]\label{exFef}
Suppose $S = T$. 
\begin{enumerate}
	\item $\PR_{\pi}(x)$ is a $\Sigma_2$ provability predicate satisfying $\DU{1}$, $\DG{2}$, $\BDU{2}$, $\SCG$, $\CB$ and $\PCG$.  
	\item $\PR_{\pi}(x)$ does not satisfy $\D{3}$.
	\item $\PA \vdash \mathsf{Con}_{\PR_{\pi}}^H$. 
\end{enumerate} 
\end{fact}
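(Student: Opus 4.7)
My plan is to deploy Feferman's classical trick. Fix a primitive recursive (hence $\Delta_0$) numeration $\tau(v)$ of $T$, and set
\[
    \pi(v) \, \equiv \, \tau(v) \wedge \mathsf{Con}_{\tau \upharpoonright v},
\]
where $\mathsf{Con}_{\tau \upharpoonright v}$ is the $\Pi_1$ sentence stating that no $y$ codes a $\tau$-proof of $0 \neq 0$ from axioms with G\"odel number $\leq v$. Then $\pi$ is $\Pi_1$, and since $T$ is consistent, $\pi$ and $\tau$ numerate the same set of axioms, namely $T$. The associated proof predicate $\Prf_\pi$ is $\Pi_1$, so $\PR_\pi(x) := \exists y\, \Prf_\pi(x, y)$ is $\Sigma_2$ and weakly represents the theorems of $T$.

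For clause (3), I would reason inside $\PA$. Assume $\PR_\pi(\gdl{\varphi})$ and $\PR_\pi(\gdl{\neg\varphi})$ for some formula $\varphi$, witnessed by proofs $p$ and $q$, and let $v$ be the maximum G\"odel number of an axiom appearing in $p$ or $q$. Each such axiom satisfies $\pi$, so in particular the axiom at $v$ delivers $\mathsf{Con}_{\tau \upharpoonright v}$. On the other hand, concatenating $p$ and $q$ with the logical tautology $\varphi \to (\neg \varphi \to 0 \neq 0)$ yields a $\tau$-proof of $0 \neq 0$ using only axioms with G\"odel number $\leq v$, i.e.\ a proof refuting $\mathsf{Con}_{\tau \upharpoonright v}$. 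This contradiction gives $\PA \vdash \mathsf{Con}^H_{\PR_\pi}$.

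For clause (1), the key observation is that $\PA$ is reflexive: for each specific $v$, $\PA$ proves $\mathsf{Con}_{\tau \upharpoonright v}$, because $\tau \upharpoonright v$ is a finitely axiomatized subtheory of $T$. Hence whenever $T \vdash \forall \vec{x}\, \varphi(\vec{x})$ via a specific proof $p$ using axioms with maximum G\"odel number $v$, the axioms used are $\pi$-axioms (provably in $\PA$), so $p$ is itself a $\pi$-proof; uniform substitution of numerals then gives $\DU{1}$. The conditions $\DG{2}$ and $\CB$ follow because modus ponens and universal instantiation combine $\pi$-proofs without introducing new axioms, and $\BDU{2}$ drops out of $\DU{1}$ together with $\CB$. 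For $\SCG$ and $\PCG$, the point is that $\pi$ contains $\mathsf{Q}$ and predicate calculus---whose axioms all have bounded G\"odel numbers---so the standard proofs of $\Sigma_1$-completeness and of provable containment of predicate logic transfer to $\PR_\pi$ using $\PA$'s knowledge of $\mathsf{Con}_{\tau \upharpoonright v}$ for the relevant bounds.

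Clause (2) is then immediate: if $\PR_\pi$ also satisfied $\D{3}$, then since it already satisfies $\D{1}$ and $\BD{2}$ (both consequences of $\BDU{2}$), Theorem \ref{G2-2} would give $T \nvdash \mathsf{Con}^H_{\PR_\pi}$, contradicting clause (3) as $S = T$ extends $\PA$. The principal technical obstacle is the careful formalization of clause (3): one must verify that the combination of $p$ and $q$ into a $\tau \upharpoonright v$-refutation is a primitive recursive operation on codes, and that the bound $v$ on axiom G\"odel numbers is genuinely maintained throughout the combination---a feature that relies essentially on using $\Pi_1$-consistency rather than a bounded-proof-length variant.
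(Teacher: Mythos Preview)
The paper does not prove this statement at all; it is recorded as a \emph{fact} with a bare citation to Feferman \cite{Fef60}. Your sketch is the standard Feferman argument and is essentially correct, so you are supplying what the paper omits.

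One small imprecision: you write that ``$\BDU{2}$ drops out of $\DU{1}$ together with $\CB$.'' That is not quite enough. From $T \vdash \forall \vec{x}(\varphi \to \psi)$ you get $S \vdash \Phi(\gdl{\forall \vec{x}(\varphi \to \psi)})$ by $\D{1}$ and then $S \vdash \forall \vec{x}\,\Phi(\gdl{\varphi(\vec{\dot{x}}) \to \psi(\vec{\dot{x}})})$ by $\CB$, but to split the boxed implication you still need $\DU{2}$. This is harmless here, since $\DU{2}$ is an immediate consequence of the $\DG{2}$ you have already argued for; just route the derivation through $\DG{2}$ (or invoke Proposition~\ref{UP1}.3) rather than through $\CB$ alone.

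Your derivation of clause (2) from clauses (1) and (3) via Theorem~\ref{G2-2} is clean and is exactly how the paper would expect the reader to see it.
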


Mostowski (p.~24 in \cite{Mos65}) introduced the formula $\PR_T^M(x) :\equiv \exists y(\Prf_T(x, y) \land \neg \Prf_T(\gdl{0 \neq 0}, y))$ as an example of a $\Sigma_1$ provability predicate for which the second incompleteness theorem does not hold. 
Notice that $\PR_T^M(x)$ is $\PA$-provably equivalent to $\Pr_T(x) \land x \neq \gdl{0 \neq 0}$ because $\PA \vdash \forall x_0 \forall x_1 \forall y(\Prf_T(x_0, y) \land \Prf_T(x_1, y) \to x_0 = x_1$). 
The following proposition shows the situation for $\PR_T^M(x)$. 

\begin{prop}\label{exMos}\leavevmode
\begin{enumerate}
	\item $\PR_T^M(x)$ is a $\Sigma_1$ provability predicate satisfying $\DU{1}$, $\SCG$ and $\PCG$.  
	\item $\PR_T^M(x)$ does not satisfy any of $\D{2}$, $\BD{2}$ and $\CB$.
	\item $\PA \vdash \mathsf{Con}_{\PR_T^M}^L$ and $T \nvdash \mathsf{Con}_{\PR_T^M}^H$. 
\end{enumerate} 
\end{prop}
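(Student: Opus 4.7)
The plan is to exploit the $\PA$-provable equivalence $\PR_T^M(x) \leftrightarrow \PR_T(x) \land x \neq \gdl{0 \neq 0}$ noted immediately before the proposition: $\PR_T^M$ is just the canonical $\PR_T$ with the single point $\gdl{0 \neq 0}$ surgically deleted from its extension. Throughout I freely use that $\PR_T(x)$ satisfies $\DU{1}$, $\DU{2}$, $\D{3}$, $\SCG$ and $\PCG$.

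For clause~1, $\PR_T^M$ is visibly $\Sigma_1$ and weakly represents $\Th(T)$ because $\PR_T$ does so and consistency of $T$ excludes $\gdl{0 \neq 0}$ from $\Th(T)$. The condition $\SCG$ for $\PR_T^M$ follows from $\SCG$ for $\PR_T$ together with $\PA \vdash \neg \mathsf{True}_{\Sigma_1}(\gdl{0 \neq 0})$; similarly $\PCG$ follows from $\PCG$ for $\PR_T$ together with $\PA \vdash \neg \PRL(\gdl{0 \neq 0})$, the latter being the finitistically verifiable consistency of pure predicate calculus. For $\DU{1}$: given $T \vdash \forall \vec x\, \varphi(\vec x)$, $\DU{1}$ for $\PR_T$ yields $S \vdash \forall \vec x\, \PR_T(\gdl{\varphi(\vec{\dot x})})$, and separately $\PA \vdash \forall \vec x(\gdl{\varphi(\vec{\dot x})} \neq \gdl{0 \neq 0})$ holds because no numerical substitution of such a $\varphi$ can produce $0 \neq 0$ syntactically (otherwise $T$ would be inconsistent).

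For clause~2, I exhibit explicit counterexamples. Take $\varphi :\equiv 0 \neq 0 \land 0 = 0$ and $\psi :\equiv 0 \neq 0$; then $T \vdash \varphi \to \psi$ trivially. Since $\PA \vdash \neg \PR_T^M(\gdl{0 \neq 0})$, and $\PA \vdash \varphi \leftrightarrow 0 \neq 0$ (hence $\PA \vdash \PR_T(\gdl{\varphi}) \leftrightarrow \PR_T(\gdl{0 \neq 0})$ via $\D{2}$ for $\PR_T$), the relevant instance of $\BD{2}$ reduces to $S \vdash \mathsf{Con}_{\PR_T}^L$, which is impossible by the second incompleteness theorem for $\PR_T$. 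The same instance refutes $\D{2}$, since $\PA$ already furnishes $\PR_T^M(\gdl{\varphi \to \psi})$ via $\D{1}$. For $\CB$, take $\varphi(x) :\equiv x \neq x$: at $x = 0$ the consequent $\forall x\, \PR_T^M(\gdl{\dot x \neq \dot x})$ forces $\PR_T^M(\gdl{0 \neq 0})$, which $\PA$ refutes, so $\CB$ would require $S \vdash \neg \PR_T^M(\gdl{\forall x(x \neq x)})$, again $\mathsf{Con}_{\PR_T}^L$ after the same $\D{2}$-reduction using $\PA \vdash \forall x(x \neq x) \leftrightarrow 0 \neq 0$.

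For clause~3, $\PA \vdash \mathsf{Con}_{\PR_T^M}^L$ is immediate because $\gdl{0 \neq 0} \neq \gdl{0 \neq 0}$ is trivially false. For $T \nvdash \mathsf{Con}_{\PR_T^M}^H$, suppose $T \vdash \mathsf{Con}_{\PR_T^M}^H$ towards contradiction. Using $\PA \vdash (\dot\neg x = \gdl{0 \neq 0} \leftrightarrow x = \gdl{0 = 0})$, rewrite the sentence as $T \vdash \forall x(\Fml(x) \land \PR_T(x) \land \PR_T(\dot\neg x) \land x \neq \gdl{0 \neq 0} \to x = \gdl{0 = 0})$. Instantiate at $\chi :\equiv 0 = \mathsf{s}(0)$; $\D{1}$ for $\PR_T$ supplies $\PA \vdash \PR_T(\gdl{\neg \chi})$, and the two inequalities $\gdl{\chi} \neq \gdl{0 \neq 0}$ and $\gdl{\chi} \neq \gdl{0 = 0}$ are $\PA$-provable, so we obtain $T \vdash \neg \PR_T(\gdl{\chi})$. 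Since $\PA \vdash (0 \neq 0) \to \chi$, $\D{2}$ for $\PR_T$ gives $\PA \vdash \PR_T(\gdl{0 \neq 0}) \to \PR_T(\gdl{\chi})$; contrapositively, $T \vdash \mathsf{Con}_{\PR_T}^L$, contradicting the second incompleteness theorem. The main obstacle is merely the coding-level inequality $\PA \vdash \forall \vec x(\gdl{\varphi(\vec{\dot x})} \neq \gdl{0 \neq 0})$ required in $\DU{1}$; this is routine but depends on properties of the chosen G\"odel numbering.
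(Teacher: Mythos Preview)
Your proof is correct; the paper itself omits the proof of this proposition entirely, so there is no alternative approach to compare against, but your argument is exactly the natural one and mirrors what the paper does in the analogous Proposition~\ref{WP6} for $\PR_T^\mathrm{VI}$. One remark on the point you yourself flag: the step $\PA \vdash \forall \vec{x}\,(\gdl{\varphi(\vec{\dot x})} \neq \gdl{0 \neq 0})$ in the $\DU{1}$ argument needs slightly more than ``no standard instance equals $0\neq 0$ syntactically,'' since that alone is only a true $\Pi_1$ fact, not automatically $\PA$-provable; the clean way is a finite syntactic case analysis on $\varphi$ (either $\varphi$ is of the shape $\neg(t_1=t_2)$ with $t_1,t_2\in\{0\}\cup\vec{x}$, in which case $T\vdash\forall\vec{x}\,\varphi(\vec{x})$ would already give $T\vdash 0\neq 0$, or else $\PA$ proves the inequality uniformly from the mismatch in outermost structure), which is indeed routine under any reasonable G\"odel numbering.
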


The existence of Rosser provability predicates satisfying some derivability conditions were discussed by Bernardi and Montagna \cite{BM84} and Arai \cite{Ara90}. 
They proved that there exists a Rosser provability predicate satisfying $\DG{2}$. 
Also Arai proved the existence of a Rosser provability predicate satisfying $\DG{3}$. 
Strictly speaking, in Arai's arguments, formulas are assumed to be in negation normal form (see \cite{Ara90}). 
We fix a natural algorithm calculating a negation normal form $\mathsf{nnf}(\varphi)$ of each formula $\varphi$ satisfying $\mathsf{nnf}(\neg \neg \varphi) \equiv \mathsf{nnf}(\varphi)$. 
Then we can understand that Arai's Rosser provability predicates $\PR^A(x)$ are of the form $\exists y (\Prf(\mathsf{nnf}(x), y) \land \forall z \leq y \neg \Prf(\mathsf{nnf}(\dot{\neg} x), z))$ for some suitable proof predicate $\Prf(x, y)$. 
Then $\PA \vdash \mathsf{Con}_{\PR^A}^H$ always holds. 
Summarizing this observation, Arai's results are stated as follows. 

\begin{fact}[Arai \cite{Ara90}]\label{exAra}
There exist $\Sigma_1$ provability predicates $\PR^A_1(x)$ and $\PR^A_2(x)$ of $T$ with: 
\begin{enumerate}
	\item  $\PR^A_1(x)$ satisfies $\D{1}$, $\DG{2}$ and $\PA \vdash \mathsf{Con}_{\PR^A_1}^H$. 
	\item  $\PR^A_2(x)$ satisfies $\D{1}$, $\DG{3}$ and $\PA \vdash \mathsf{Con}_{\PR^A_2}^H$. 
\end{enumerate}
\end{fact}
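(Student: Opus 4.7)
The plan is, in each case, to produce the predicate as a Rosser-style wrapping
\[
\PR^A_i(x) :\equiv \exists y\bigl(\Prf^A_i(\mathsf{nnf}(x), y) \land \forall z \leq y \, \neg \Prf^A_i(\mathsf{nnf}(\dot{\neg} x), z)\bigr)
\]
of a carefully modified proof predicate $\Prf^A_i(x, y)$, as indicated in the preamble to the fact. In both cases the proof predicate is built via an application of the recursion theorem so that its standard interpretation still weakly represents $T$-theorems---giving $\D{1}$ via $\Sigma_1$-completeness---while additional closure clauses are built in that make the corresponding derivability condition formalizable in $\PA$. The statement $\PA \vdash \mathsf{Con}_{\PR^A_i}^H$ will follow from the usual Rosser antisymmetry argument: hypothetical simultaneous witnesses for $\Phi(\gdl{\varphi})$ and $\Phi(\gdl{\neg \varphi})$ violate the size comparison encoded in the Rosser quantifier.

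For part (1), I would define $\Prf^A_1(x, y)$ to satisfy, provably in $\PA$, the disjunction of: (a) $y = \langle 0, y_0 \rangle$ with $\Prf_T(x, y_0)$ for some fixed standard proof predicate of $T$; or (b) $y = \langle 1, a, b, u \rangle$ with $a, b, u < y$, $\Prf^A_1(u \dot{\to} x, a)$, and $\Prf^A_1(u, b)$. Since every recursive occurrence sits strictly below $y$, course-of-values recursion shows the definition is $\Delta_1$, and induction on $y$ identifies the provable formulas with the theorems of $T$. Condition $\DG{2}$ then reduces to producing, from witnesses $a, b$ for $\Phi(x \dot{\to} y)$ and $\Phi(x)$, the composite witness $\langle 1, a, b, x \rangle$ for $\Phi(y)$; the delicate part is to verify that this extends to a \emph{Rosser} witness, which is arranged by showing inside $\PA$ that any smaller counter-witness for $\dot{\neg}y$ could be combined with the hypothesised Rosser witnesses for $x \dot{\to} y$ and $x$ to yield a smaller counter-witness for $\dot{\neg}x$, contradicting the Rosser condition already in force for $x$.

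For part (2), the added requirement is $\DG{3}$, internal $\Sigma_1$-completeness of $\Phi$ for itself. I would adjoin to the standard clause an additional ``lifting'' clause: $y = \langle 2, x_0, y_0 \rangle$ with $y_0 < y$, $\Prf^A_2(x_0, y_0)$, and $x$ is the value of the primitive recursive term $\gdl{\Phi(\dot{v})}$ at $v = x_0$. This clause directly encodes the rule ``a witness for $\Phi(x_0)$ yields a witness for $\Phi(\gdl{\Phi(\dot{x}_0)})$'', and $\DG{3}$ then follows in $\PA$ by reading off the new code $\langle 2, x, y \rangle$ from an arbitrary witness $y$ for $\Phi(x)$; the recursion theorem is needed here because the added clause refers to the Gödel number of $\Phi$ itself.

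The principal obstacle in both parts is arranging the recursion-theoretic setup so that the new clauses do not disturb the two basic properties required for the Rosser wrapping to behave: the definition must remain $\Sigma_1$, which forces every recursive occurrence to sit strictly under a code-bound read off from $y$, and the set of $x$ provable via the extended predicate must still coincide exactly with the theorems of $T$, so that nothing beyond ordinary $T$-proofs sneaks in. The normal-form convention $\mathsf{nnf}(\neg\neg\varphi) \equiv \mathsf{nnf}(\varphi)$ is what keeps the negation-matching inside the Rosser quantifier stable under the double negations that arise when iteratively applying the new closure rules, so that the size comparison underwriting $\PA \vdash \mathsf{Con}_{\PR^A_i}^H$ carries through unchanged.
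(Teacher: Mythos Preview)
The paper does not supply its own proof of this statement; it is recorded as a \emph{Fact} with a citation to Arai, preceded only by a remark on the shape of Arai's Rosser predicates and the $\mathsf{nnf}$ convention. So there is no in-paper argument to compare your sketch against.

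On its own merits, your outline has a genuine gap at precisely the step you flag as delicate: lifting $\Prf$-witnesses to \emph{Rosser} witnesses under the added closure clauses. In part (1), suppose $a$ is a Rosser witness for $x \dot\to y$, $b$ one for $x$, and $c = \langle 1, a, b, x\rangle$ your composite $\Prf^A_1$-witness for $y$. For $c$ to be a Rosser witness you must exclude any $z \le c$ with $\Prf^A_1(\mathsf{nnf}(\dot\neg y), z)$. Your proposed mechanism---combine such a $z$ with the data $a,b$ to manufacture a proof of $\mathsf{nnf}(\dot\neg x)$ of size at most $b$---does not work: the only closure rule you have installed is forward modus ponens, which produces no proof of $\neg x$ from a proof of $\neg y$; and even if you added a contraposition-style clause, the resulting code would have the form $\langle \cdot, z, a, \ldots\rangle$, hence be \emph{larger} than $z$ and $a$, not bounded by $b$. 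The same obstruction hits part (2): your lifting clause yields a $\Prf^A_2$-witness for $\Phi(\gdl{\Phi(\dot x)})$, but nothing you have written rules out a smaller $\Prf^A_2$-proof of its negation, and you give no argument there at all. (There is also a secondary mismatch: after applying $\mathsf{nnf}$, the formula $\mathsf{nnf}(x \dot\to y)$ need not literally be an implication, so your MP clause as stated does not fire on the objects the Rosser wrapper actually hands it.)

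The constructions of Arai (and the related Bernardi--Montagna construction for $\DG{2}$) do not proceed by adding closure clauses to a proof predicate and hoping the Rosser side-condition survives. They use a Guaspari--Solovay style self-referential enumeration: via the recursion theorem one builds a primitive recursive listing of formulas in which the \emph{order} of emission is controlled stage by stage so that, whenever the relevant derivability condition demands $\Phi(y)$, the enumeration is arranged to output $y$ before any refutation of $y$. The derivability condition then holds not because witnesses compose, but because the ordering has been rigged; this is exactly what sidesteps the size-growth problem that blocks your approach.
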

By Proposition \ref{LP1}.4, $\PR^A_1(x)$ satisfies $\BD{2}$. 
By Theorems \ref{G2} and \ref{MT}, and Propositions \ref{LP1}, \ref{UP1} and \ref{UP2}, $\PR^A_1(x)$ does not satisfy any of $\DU{1}$, $\CB$, $\BDU{2}$, $\D{3}$ and $\PC$. 
By Theorems \ref{G2-2}, \ref{Jer} and \ref{G2-3} and Proposition \ref{LP1}.4, $\PR^A_2(x)$ does not satisfy any of $\D{2}$, $\BD{2}$, $\SC$ and $\PC$. 

In \cite{Kur2}, the author proved the existence of usual Rosser provability predicates satisfying additional derivability conditions. 
That is to say, 

\begin{fact}[Kurahashi \cite{Kur2}]\label{exKur}\leavevmode
Suppose $S = T$. 
There exist $\Sigma_1$ provability predicates $\PR_1^R(x)$, $\PR_2^R(x)$ and $\PR_3^R(x)$ of $T$ with: 
\begin{enumerate}
	\item $\PR_1^R(x)$ satisfies $\D{1}$, $\DG{2}$, $\DCG$ and $\PA \vdash \mathsf{Con}_{\PR_1^R}^H$. 
	\item $\PR_2^R(x)$ satisfies $\DU{1}$, $\CB$, $\D{2}$, $\DCG$ and $\PA \vdash \mathsf{Con}_{\PR_2^R}^L$. 
	\item $\PR_3^R(x)$ satisfies $\DU{1}$, $\CB$, $\BD{2}$, $\DG{3}$, $\DCG$ and $\PA \vdash \mathsf{Con}_{\PR_3^R}^L$, but does not satisfy $\SC$. 
\end{enumerate}
\end{fact}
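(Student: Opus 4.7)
The plan is to construct three tailored Rosser-style provability predicates, each obtained by modifying a standard proof predicate so as to inject the desired closure properties while preserving the Rosser cancellation mechanism. Recall the general template: starting from a reasonable proof predicate $\Prf'(x,y)$, the associated Rosser predicate
\[
	\PR^R(x) :\equiv \exists y\bigl(\Prf'(x,y) \land \forall z \leq y\, \neg \Prf'(\dot\neg x,z)\bigr)
\]
automatically satisfies $\PA \vdash \Con^H_{\PR^R}$, since inside $\PA$ one can carry out the Rosser witness comparison. So once the underlying proof predicate $\Prf'(x,y)$ numerates $T$ and the Rosser wrapper is applied, the consistency part comes for free; all the work lies in arranging the desired derivability conditions.

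For $\PR^R_1$, I would construct $\Prf'_1(x,y)$ by a primitive recursive stage-by-stage enumeration of proofs in which, at each new stage, one additionally records every conclusion of modus ponens applied to previously enumerated theorems, together with every true $\Delta_0$ instance (for $\DCG$). Provably in $\PA$ this yields a $\Sigma_1$ proof predicate closed under modus ponens on G\"odel numbers, which gives $\D{1}$, the global $\DG{2}$, and $\DCG$ simultaneously; the Rosser wrapper then produces $\PR^R_1$. For $\PR^R_2$, I would strengthen the enumeration further by also closing under universal instantiation at each stage, so that from an enumerated proof of $\forall \vec x \varphi(\vec x)$ we enumerate proofs of all its numerical instances; this is designed precisely to force $\CB$ and $\DU{1}$ while retaining $\D{2}$ and $\DCG$. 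Because we only achieve $\Con^L$ (not $\Con^H$) for $\PR^R_2$, the Rosser construction needs to be slightly relaxed to $\Prf'_2$ relative to $\gdl{0\neq 0}$ only, e.g., $\exists y(\Prf'_2(x,y)\land \forall z\leq y\,\neg\Prf'_2(\gdl{0\neq 0},z))$ type condition, preserving the additional closures.

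For $\PR^R_3$ the strategy is the most delicate: the enumeration must be closed under modus ponens from theorems (to give $\BD{2}$), under the self-reference rule $\Phi(\gdl{\varphi})\to\Phi(\gdl{\Phi(\gdl\varphi)})$ internally (to give $\DG{3}$), and under universal instantiation and $\Delta_0$ truth (for $\DU{1}$, $\CB$, $\DCG$), but \emph{not} closed under provable $\Sigma_1$-completeness. This last restriction is enforced by a diagonal construction: fix by the fixed point lemma a $\Sigma_1$ sentence $\sigma$ asserting its own non-enumeration, so that $\sigma$ becomes true but $\PR^R_3(\gdl\sigma)$ fails, refuting $\SC$. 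One must check that refusing to enumerate $\sigma$ does not destroy the other closures; this is possible because $\BD{2}$, $\DG{3}$ and $\CB$ are conditional implications that remain vacuously true whenever the relevant hypothesis is also absent from the enumeration.

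The main obstacle is the balancing act in this last construction: every closure condition we impose adds new proofs into $\Prf'_3$, and each such addition threatens both (i) the Rosser cancellation that secures $\Con^L$ and (ii) the carefully arranged failure of $\SC$. The resolution is a priority-style bookkeeping, analogous to Bernardi--Montagna \cite{BM84} and Arai \cite{Ara90}, in which the Rosser comparison is performed on the modified proof predicate rather than the underlying one, and the diagonal sentence witnessing the failure of $\SC$ is given highest priority, so that admitting any candidate proof of $\sigma$ is blocked before it occurs. Verifying $\PA \vdash \Con^L_{\PR^R_3}$ then reduces to a formalized induction showing that this blocking never spuriously produces a proof of $0\neq 0$, which follows from the consistency of $T$ together with the fact that $\sigma$ is preserved from the standard enumeration only through controlled applications of the imposed closure rules.
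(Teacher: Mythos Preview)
The paper does not prove this statement: it is quoted as a fact from \cite{Kur2} and used without argument, so there is no proof in the paper to compare against.

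Evaluating your sketch on its own merits, there is a genuine gap already in the first two constructions. You propose to enrich the underlying proof predicate $\Prf'$ with the desired closures (modus ponens for $\DG{2}$ or $\D{2}$, universal instantiation for $\CB$ and $\DU{1}$, true $\Delta_0$ sentences for $\DCG$) and \emph{then} apply the Rosser or Mostowski wrapper. But the wrapper does not preserve such closures. Even if $\Prf'$ provably enumerates $\psi$ whenever it enumerates $\varphi$ and $\varphi\to\psi$, the Rosser predicate $\exists y\bigl(\Prf'(x,y)\land\forall z\le y\,\neg\Prf'(\dot\neg x,z)\bigr)$ need not satisfy $\DG{2}$: the enumeration of $\psi$ produced by your closure step may receive a code larger than some refutation of $\psi$ already present in $\Prf'$, and the witness comparison then blocks it. The same obstruction hits $\CB$ and $\DU{1}$ for $\PR_2^R$. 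This is exactly why the known constructions of Rosser predicates satisfying $\DG{2}$ (Bernardi--Montagna, Arai) do not separate enrichment from Rosserification: the enumeration must be built stage by stage, simultaneously deciding which formulas are proved and which are refuted, so that both the closure properties and the Rosser comparison are maintained throughout. You invoke this priority bookkeeping only for $\PR_3^R$, but it is already indispensable for $\PR_1^R$ and $\PR_2^R$.

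There is a second, independent problem in your plan for refuting $\SC$ for $\PR_3^R$. You want a true $\Sigma_1$ sentence $\sigma$ with $\PR_3^R(\gdl{\sigma})$ failing. But any true $\Sigma_1$ sentence is $T$-provable, and since $\PR_3^R$ is to be a provability predicate of $T$ it satisfies $\D{1}$, whence $S\vdash\PR_3^R(\gdl{\sigma})$. So $\SC$ cannot fail at a \emph{true} $\Sigma_1$ sentence; the witness to the failure must be a $\Sigma_1$ sentence undecided in $T$, obtained by a Rosser-type fixed point entangled with the construction of $\PR_3^R$ itself (compare the role of $\sigma$ in the paper's Proposition~\ref{WP4}). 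Your description ``$\sigma$ asserts its own non-enumeration'' would make $\sigma$ equivalent to a $\Pi_1$ sentence, not give a $\Sigma_1$ fixed point of the required shape.
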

As in Fact \ref{exAra}.1, $\PR_1^R(x)$ satisfies $\BD{2}$, but does not satisfy any of $\DU{1}$, $\CB$, $\BDU{2}$, $\D{3}$ and $\PC$. 
By Proposition \ref{LP1}.4, $\PR_2^R(x)$ satisfies $\BD{2}$, but does not satisfy any of $\DU{2}$, $\D{3}$, $\BDU{2}$ and $\PC$ by Theorems \ref{G2} and \ref{MT}, and Propositions \ref{LP1}.6 and \ref{UP1}.3. 
By Theorems \ref{G2} and \ref{MT} and Proposition \ref{LP1}, $\PR_3^R(x)$ does not satisfy any of $\D{2}$, $\BDU{2}$ and $\PC$. 

In the remainder of this section, we introduce seven $\Sigma_1$ provability predicates $\PR_T^\mathrm{I}(x)$, $\PR_T^\mathrm{II}(x)$, $\PR_T^\mathrm{III}(x)$, $\PR_T^\mathrm{IV}(x)$, $\PR_T^\mathrm{V}(x)$, $\PR_T^\mathrm{VI}(x)$ and $\PR^\ast(x)$ which indicate several non-implications of the conditions. 
The first three provability predicates are constructed in a similar way. 
Before introducing them, we prepare a definition and a lemma. 

\begin{defn}
Let $\delta(x, z)$ be a $\Delta_1$ formula. 
\begin{enumerate}
	\item $\Prf_T[\delta](x, y) : \equiv \Prf_T(x, y) \land \forall z < y (\Prf_T(\gdl{0 \neq 0}, z) \to \delta(x, z))$. 
	\item $\PR_T[\delta](x) : \equiv \exists y \Prf_T[\delta](x, y)$. 
\end{enumerate}
\end{defn}

\begin{lem}\label{WL1}
For any $\Delta_1$ formula $\delta(x, z)$, 
\begin{enumerate}
	\item $\PR_T[\delta](x)$ is a $\Sigma_1$ provability predicate of $T$. 
	\item $\PA \vdash \forall x (\forall z(\Prf_T(\gdl{0 \neq 0}, z) \to \delta(x, z)) \to (\PR_T(x) \leftrightarrow \PR_T[\delta](x)))$. 
	\item If $\PA \vdash \forall x \forall z(\Fml(x) \land x \leq z \to \delta(x, z))$, then
\[
	\PA \vdash \forall x \forall z(\Prf_T(\gdl{0 \neq 0}, z) \land \Fml(x) \land \PR_T[\delta](x) \to \delta(x, z)).
\] 
\end{enumerate}
\end{lem}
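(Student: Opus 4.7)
The plan is to verify each clause essentially by unfolding the definition and using the external assumption that $T$ is consistent together with the property $\PA \vdash \forall x \forall y(\Prf_T(x,y) \to x \leq y)$ stated in Section~\ref{Sec:DC}.

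\textbf{Clause 1.} The formula $\PR_T[\delta](x)$ is clearly $\Sigma_1$, since $\Prf_T(x,y)$ and $\delta(x,z)$ are $\Delta_1$ and the inner universal quantifier on $z$ is bounded by $y$. For weak representability of the theorems of $T$, I would argue each direction separately. If $n$ is the G\"odel number of some theorem $\varphi$ of $T$, pick a $T$-proof $p$ of $\varphi$, so $\PA \vdash \Prf_T(\overline{n}, \overline{p})$. Since $T$ is consistent (an external standing assumption), we have $\mathbb{N}\models\neg\Prf_T(\gdl{0\neq 0},\overline{z})$ for every $z<p$, and by $\Sigma_1$-completeness $\PA \vdash \neg\Prf_T(\gdl{0\neq 0},\overline{z})$ for each such $z$. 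Combining these finitely many facts yields $\PA \vdash \forall z<\overline{p}(\Prf_T(\gdl{0\neq 0},z)\to\delta(\overline{n},z))$, hence $\PA\vdash\PR_T[\delta](\overline{n})$. Conversely, since $\PR_T[\delta](x)\to\PR_T(x)$ is provable in $\PA$, any $n$ with $\PA\vdash\PR_T[\delta](\overline{n})$ already satisfies $\PA\vdash\PR_T(\overline{n})$, so $n$ codes a theorem of $T$ by weak representability of $\PR_T$.

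\textbf{Clause 2.} Work inside $\PA$, fix $x$, and assume the hypothesis $\forall z(\Prf_T(\gdl{0\neq 0},z)\to\delta(x,z))$. The direction $\PR_T[\delta](x)\to\PR_T(x)$ is immediate from the definition by dropping the extra conjunct. Conversely, if $\PR_T(x)$ holds, fix a witness $y$ with $\Prf_T(x,y)$; the bounded statement $\forall z<y(\Prf_T(\gdl{0\neq 0},z)\to\delta(x,z))$ is an instance of the assumption, so $\Prf_T[\delta](x,y)$ holds, witnessing $\PR_T[\delta](x)$.

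\textbf{Clause 3.} Again working inside $\PA$, fix $x,z$ with $\Prf_T(\gdl{0\neq 0},z)$, $\Fml(x)$, and $\PR_T[\delta](x)$, and let $y$ be any witness so that $\Prf_T(x,y)$ and $\forall z'<y(\Prf_T(\gdl{0\neq 0},z')\to\delta(x,z'))$ both hold. Split into two cases. If $z<y$, the bounded universal clause applied to $z$ and the hypothesis $\Prf_T(\gdl{0\neq 0},z)$ immediately give $\delta(x,z)$. If $y\leq z$, use the assumed property $\PA\vdash\forall x\forall y(\Prf_T(x,y)\to x\leq y)$ together with $\Prf_T(x,y)$ to obtain $x\leq y\leq z$; then the hypothesis on $\delta$ applied to $\Fml(x)$ and $x\leq z$ yields $\delta(x,z)$.

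No step looks difficult; the only mildly delicate piece is clause~1, where one must remember that the external consistency of $T$ is what makes each standard-code instance $\neg\Prf_T(\gdl{0\neq 0},\overline{z})$ true and hence provable in $\PA$, so that the strengthened proof predicate $\Prf_T[\delta]$ still captures each actual $T$-proof.
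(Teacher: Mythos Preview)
Your proof is correct and follows essentially the same approach as the paper's own proof: clause~1 via the consistency of $T$ (so that $\PA \vdash \forall z < \overline{n}\,\neg \Prf_T(\gdl{0\neq 0},z)$ for each standard $n$), clause~2 directly from the definition, and clause~3 by the case split $z<y$ versus $y\leq z$ together with $\PA \vdash \Prf_T(x,y)\to x\leq y$. If anything, you spell out clauses~1 and~2 in slightly more detail than the paper does.
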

\begin{proof}
1. Let $\varphi$ be any formula and let $n$ be any natural number. 
Since $\PA \vdash \forall z < \overline{n} \neg \Prf_T(\gdl{0 \neq 0}, z)$, $\PA \vdash \Prf_T(\gdl{\varphi}, \overline{n}) \leftrightarrow \Prf_T[\delta](\gdl{\varphi}, \overline{n})$. 
Since this equivalence is true in the standard model of arithmetic, we obtain that $\PA \vdash \PR_T(\gdl{\varphi})$ if and only if $\PA \vdash \PR_T[\delta](\gdl{\varphi})$. 
It follows that $\PR_T[\delta](x)$ is also a $\Sigma_1$ provability predicate of $T$. 

2. This is immediate from the definition. 

3. Suppose $\PA \vdash \forall x \forall z(\Fml(x) \land x \leq z \to \delta(x, z))$. 
By the definition of $\Prf_T[\delta](x, y)$, 
\begin{align}\label{eql-1}
	\PA \vdash \forall x \forall y \forall z(\Prf_T(\gdl{0 \neq 0}, z) \land \Prf_T[\delta](x, y) \land z < y \to \delta(x, z)).
\end{align}
Since $\PA \vdash \Prf_T[\delta](x, y) \to \Prf_T(x, y)$ and $\PA \vdash \Prf_T(x, y) \to x \leq y$, we have $\PA \vdash \Prf_T[\delta](x, y) \to x \leq y$. 
Thus $\PA \vdash \Prf_T[\delta](x, y) \land y \leq z \to x \leq z$. 
By the supposition, $\PA \vdash \Fml(x) \land \Prf_T[\delta](x, y) \land y \leq z \to \delta(x, z)$. 
From this with (\ref{eql-1}), we obtain
\[
	\PA \vdash \forall x \forall y \forall z (\Prf_T(\gdl{0 \neq 0}, z) \land \Fml(x) \land \Prf_T[\delta](x, y) \to \delta(x, z)), 
\]
and hence 
\[
	\PA \vdash \forall x \forall z (\Prf_T(\gdl{0 \neq 0}, z) \land \Fml(x) \land \PR_T[\delta](x) \to \delta(x, z)). 
\]
\end{proof}

Let $\Even(x)$ be a natural $\Delta_1$ formula saying that ``$x$ is the G\"odel number of a formula containing an even number of logical symbols''. 
Proposition \ref{WP1} shows that full local derivability conditions do not imply uniform derivability conditions.

\begin{prop}\label{WP1}
There exists a $\Sigma_1$ provability predicate $\PR_T^\mathrm{I}(x)$ of $T$ with: 
\begin{enumerate}
	\item $\PR_T^\mathrm{I}(x)$ satisfies $\D{1}$, $\D{2}$ and $\SC$. 
	\item $\PR_T^\mathrm{I}(x)$ does not satisfy any of $\DU{1}$, $\DU{2}$, $\DU{3}$, $\DCU$ and $\PCU$. 
\end{enumerate}
\end{prop}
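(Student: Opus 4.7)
I would define $\PR_T^\mathrm{I}(x) := \PR_T[\delta](x)$ using the $\PR_T[\delta]$-device introduced just before Lemma~\ref{WL1}, for a $\Delta_1$ formula $\delta(x,z)$ satisfying the hypothesis $\PA \vdash \forall x \forall z(\Fml(x) \land x \leq z \to \delta(x,z))$ of Lemma~\ref{WL1}.3. The natural starting candidate is $\delta(x,z) := (x \leq z)$. The goal is to arrange that for each specific $\varphi$ one has $\PA \vdash \PR_T^\mathrm{I}(\gdl{\varphi}) \leftrightarrow \PR_T(\gdl{\varphi})$, so that the local conditions drop out of the known facts about $\PR_T$, while the uniformizations diverge so strongly from those of $\PR_T$ that all five uniform conditions force $\mathsf{Con}_T$.

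For the positive clause, Lemma~\ref{WL1}.1 gives that $\PR_T^\mathrm{I}$ is a $\Sigma_1$ provability predicate of $T$, hence satisfies $\D{1}$. For any specific formula $\varphi$, using $\PA \vdash \Prf_T(x,y) \to x \leq y$, the premise $\forall z(\Prf_T(\gdl{0 \neq 0}, z) \to \delta(\gdl{\varphi}, z))$ of Lemma~\ref{WL1}.2 reduces to a bounded decidable claim that is $\PA$-provable by the consistency of $T$. Lemma~\ref{WL1}.2 then yields $\PA \vdash \PR_T^\mathrm{I}(\gdl{\varphi}) \leftrightarrow \PR_T(\gdl{\varphi})$ for each $\varphi$, and the local versions of $\D{2}$ and $\SC$ for $\PR_T^\mathrm{I}$ follow from the corresponding well-known facts about $\PR_T$.

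For the negative clause, the three failures $\DU{1}$, $\DCU$, $\PCU$ fall out of a single diagonal application of Lemma~\ref{WL1}.3 with the witness $\varphi(x) := x = x$. Each condition would give $S \vdash \forall x\,\PR_T^\mathrm{I}(\gdl{\dot{x} = \dot{x}})$ (using $T \vdash \forall x(x = x)$ for $\DU{1}$, $\Delta_0$-ness of $x = x$ for $\DCU$, and $\PA \vdash \forall x\,\PRL(\gdl{\dot{x} = \dot{x}})$ for $\PCU$), and then Lemma~\ref{WL1}.3 would produce
\[
	S \vdash \forall x \forall z(\Prf_T(\gdl{0 \neq 0}, z) \to \delta(\gdl{\dot{x} = \dot{x}}, z)).
\]
For $\delta(x,z) := (x \leq z)$, this collapses to $S \vdash \mathsf{Con}_T$ because $\gdl{\dot{x} = \dot{x}}$ is provably unbounded in $x$, contradicting the second incompleteness theorem. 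I would address $\DU{2}$ and $\DU{3}$ by the same diagonal mechanism with the witnesses $\varphi(x) := 0 = 0$, $\psi(x) := x = x$ (for $\DU{2}$) and $\varphi(x) := x = x$ (for $\DU{3}$), again extracting $\mathsf{Con}_T$ via Lemma~\ref{WL1}.3.

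The main obstacle is calibrating $\delta$ so that all five failures follow uniformly. With $\delta(x,z) := (x \leq z)$ one computes that $\PA \vdash \PR_T^\mathrm{I}(x) \leftrightarrow \PR_T(x) \land \forall z < x\,\neg \Prf_T(\gdl{0 \neq 0}, z)$, whose second conjunct is \emph{monotone} in $x$; combined with $\PA \vdash \gdl{\psi(\vec{\dot{x}})} < \gdl{\varphi(\vec{\dot{x}}) \to \psi(\vec{\dot{x}})}$ and $\DU{2}$ for $\PR_T$, this spuriously validates $\DU{2}$ for $\PR_T^\mathrm{I}$. The hard part is therefore to refine $\delta$ so that it still satisfies the hypothesis of Lemma~\ref{WL1}.3 (preserving the earlier steps verbatim) but the associated correction to $\PR_T$ is no longer monotone in the formula argument, so that the same diagonal extraction does yield $S \vdash \mathsf{Con}_T$ from the $\DU{2}$- and $\DU{3}$-hypotheses.
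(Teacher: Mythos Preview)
Your framework is exactly the paper's, and your treatment of clause~1 and of the failures of $\DU{1}$, $\DCU$, $\PCU$ is correct. But the proposal is explicitly incomplete: you isolate the ``hard part'' (refining $\delta$ to break monotonicity so that $\DU{2}$ and $\DU{3}$ also fail) and do not resolve it. The paper's answer is $\delta(x,z) :\equiv x \leq z \lor \Even(x)$, where $\Even(x)$ says that $x$ codes a formula with an even number of logical symbols. This disjunct preserves the hypothesis of Lemma~\ref{WL1}.3 and the pointwise equivalence $\PA \vdash \PR_T^\mathrm{I}(\gdl{\varphi}) \leftrightarrow \PR_T(\gdl{\varphi})$, while destroying monotonicity. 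For $\DU{2}$ one picks $\varphi(x)$ even and $\psi(x)$ odd, so that $\varphi(x)\to\psi(x)$ is again even; under $\PR_T(\gdl{0\neq 0})$ the $\Even$-disjunct then makes $\PR_T^\mathrm{I}(\gdl{\varphi(\dot x)\to\psi(\dot x)})$ and $\PR_T^\mathrm{I}(\gdl{\varphi(\dot x)})$ hold for \emph{all} $x$, and $\DU{2}$ plus Lemma~\ref{WL1}.3 would force $\gdl{\psi(\dot x)} \leq z \lor \Even(\gdl{\psi(\dot x)})$, contradicting the odd parity of $\psi$ for large $x$. For $\DU{3}$ the paper first pads $\PR_T^\mathrm{I}(x)$ by $\land\,0=0$ if necessary so that $\PR_T^\mathrm{I}(x)$ itself has odd parity, and runs the same argument with $\PR_T^\mathrm{I}(\gdl{\varphi(\dot x)})$ playing the role of $\psi(\dot x)$.

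Note also that your sketched $\DU{3}$ argument does not go through with $\delta(x,z):=x\leq z$ even in outline: under $\PR_T(\gdl{0\neq 0})$ the antecedent $\PR_T^\mathrm{I}(\gdl{\dot x = \dot x})$ already \emph{fails} for large $x$ (a proof of $0\neq 0$ sits below $\gdl{\dot x = \dot x}$), so the $\DU{3}$-implication is vacuous there and Lemma~\ref{WL1}.3 extracts nothing. The parity device is needed for $\DU{3}$ just as much as for $\DU{2}$.
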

\begin{proof}
Let $\PR_T^\mathrm{I}(x) : \equiv \PR_T[x \leq z \lor \Even(x)](x)$. 
Then $\PR_T^\mathrm{I}(x)$ is a $\Sigma_1$ provability predicate of $T$ by Lemma \ref{WL1}.1. 
If $\PR_T^\mathrm{I}(x)$ contains an even number of logical symbols, we replace $\PR_T^\mathrm{I}(x)$ with $\PR_T^\mathrm{I}(x) \land 0 = 0$. 
Then $\PR_T^\mathrm{I}(x)$ contains an odd number of logical symbols, and hence $\PA \vdash \forall x\neg \Even(\gdl{\PR_T^\mathrm{I}(\dot{x})})$. 

Let $\varphi$ be any formula. 
Since $\PA \vdash \forall z(\Prf_T(\gdl{0\neq 0}, z) \to \gdl{\varphi} \leq z \lor \Even(\gdl{\varphi}))$, we have $\PA \vdash \PR_T(\gdl{\varphi}) \leftrightarrow \PR_T^\mathrm{I}(\gdl{\varphi})$ by Lemma \ref{WL1}.2. 
Therefore local derivability conditions for $\PR_T^\mathrm{I}(x)$ are inherited from those for $\PR_T(x)$. 

We prove that $\PR_T^\mathrm{I}(x)$ does not satisfy any of uniform derivability conditions. 
Since $\PA \vdash \forall x \forall z(\Fml(x) \land x \leq z \to (x \leq z \lor \Even(x)))$, 
\[
	\PA \vdash \forall x \forall z (\Prf_T(\gdl{0 \neq 0}, z) \land \Fml(x) \land \PR_T^\mathrm{I}(x) \to (x \leq z \lor \Even(x)))
\]
by Lemma \ref{WL1}.3. 
For the sake of simplicity, we deal with formulas whose only free variable is $x$. 
Let $\varphi(x)$ be such a formula. 
Then
\[
	\PA \vdash \forall x \forall z (\Prf_T(\gdl{0 \neq 0}, z) \land \PR_T^\mathrm{I}(\gdl{\varphi(\dot{x})}) \to (\gdl{\varphi(\dot{x})} \leq z \lor \Even(\gdl{\varphi(\dot{x})}))). 
\]
Since $\PA \vdash x \leq \gdl{\varphi(\dot{x})}$, we obtain
\begin{align}\label{eq1-1}
	\PA \vdash \forall x \forall z (\Prf_T(\gdl{0 \neq 0}, z) \land \PR_T^\mathrm{I}(\gdl{\varphi(\dot{x})}) \to (x \leq z \lor \Even(\gdl{\varphi(\dot{x})}))). 
\end{align}

\begin{itemize}
	\item Since $\PA \vdash \forall x \neg \Even(\gdl{0=0 \land \dot{x} = \dot{x}})$, 
\[
	\PA \vdash \forall x \forall z (\Prf_T(\gdl{0 \neq 0}, z) \to (x \leq z \lor \neg \PR_T^\mathrm{I}(\gdl{0 = 0 \land \dot{x} = \dot{x}}))) 
\]
by (\ref{eq1-1}). 
Hence $\PA \vdash \PR_T(\gdl{0 \neq 0}) \to \exists x \neg \PR_T^\mathrm{I}(\gdl{0 = 0 \land \dot{x} = \dot{x}})$ because $\PA \vdash \forall z \exists x (x > z)$. 
It follows $S \nvdash \forall x \PR_T^\mathrm{I}(\gdl{0 = 0 \land \dot{x} = \dot{x}})$ because $S \nvdash \neg \PR_T(\gdl{0\neq 0})$. 
This shows that $\PR_T^\mathrm{I}(x)$ does not satisfy $\DU{1}$. 

	\item Let $\varphi(x)$ and $\psi(x)$ be formulas with $\PA \vdash \forall x \Even(\gdl{\varphi(\dot{x})}) \land \forall x \neg \Even(\gdl{\psi(\dot{x})})$. 
Then $\PA \vdash \forall x \Even(\gdl{\varphi(\dot{x}) \to \psi(\dot{x})})$. 
Since $\PA \vdash \PR_T(\gdl{0\neq 0}) \to \PR_T(\gdl{\varphi(\dot{x}) \to \psi(\dot{x})}) \land \PR_T(\gdl{\varphi(\dot{x})})$, we have 
\[
	\PA \vdash \PR_T(\gdl{0 \neq 0}) \to \PR_T^\mathrm{I}(\gdl{\varphi(\dot{x}) \to \psi(\dot{x})}) \land \PR_T^\mathrm{I}(\gdl{\varphi(\dot{x})})
\]
by the choice of $\varphi(x)$ and $\psi(x)$, and the definition of $\Prf_T^\mathrm{I}(x, y)$. 
Suppose, towards a contradiction, that $\PR_T^\mathrm{I}(x)$ satisfies $\DU{2}$, then $S \vdash \PR_T(\gdl{0 \neq 0}) \to \PR_T^\mathrm{I}(\gdl{\psi(\dot{x})})$. 
By (\ref{eq1-1}), $S \vdash \Prf_T(\gdl{0 \neq 0}, z) \to (x \leq z \lor \Even(\gdl{\psi(\dot{x})}))$, and hence $S \vdash \PR_T(\gdl{0 \neq 0}) \to \exists x \Even(\gdl{\psi(\dot{x})})$. 
By the choice of $\psi(x)$, we obtain $S \vdash \neg \PR_T(\gdl{0 \neq 0})$. 
This is a contradiction. 
Therefore $\DU{2}$ does not hold for $\PR_T^\mathrm{I}(x)$. 

	\item Let $\varphi(x)$ be a formula with $\PA \vdash \forall x \Even(\gdl{\varphi(\dot{x})})$. 
Then $\PA \vdash \PR_T(\gdl{0 \neq 0}) \to \PR_T^\mathrm{I}(\gdl{\varphi(\dot{x})})$ as described above. 
Suppose that $\DU{3}$ holds for $\PR_T^\mathrm{I}(x)$. 
Then $S \vdash \PR_T(\gdl{0 \neq 0}) \to \PR_T^\mathrm{I}(\gdl{\PR_T^\mathrm{I}(\gdl{\varphi(\dot{x})})})$. 
By (\ref{eq1-1}), we have $S \vdash \PR_T(\gdl{0 \neq 0}) \to \exists x \Even(\gdl{\PR_T^\mathrm{I}(\gdl{\varphi(\dot{x})})})$. 
Since $\PR_T^\mathrm{I}(x)$ contains an odd number of logical symbols, $\neg \PR_T(\gdl{0 \neq 0})$ is proved in $S$, and this is a contradiction. 
Hence $\DU{3}$ does not hold for $\PR_T^\mathrm{I}(x)$. 

	\item As described above, $\PA \vdash \PR_T(\gdl{0 \neq 0}) \to \exists x \neg \PR_T^\mathrm{I}(\gdl{0 = 0 \land \dot{x} = \dot{x}})$. 
If $S \vdash \forall x(0 = 0 \land x = x \to \PR_T^\mathrm{I}(\gdl{0 = 0 \land \dot{x} = \dot{x}}))$, then $S \vdash \PR_T(\gdl{0 \neq 0}) \to \exists x \neg (0 = 0 \land x = x)$. 
This implies $S \vdash \neg \PR_T(\gdl{0 \neq 0})$, a contradiction.  
Therefore $S \nvdash \forall x(0 = 0 \land x = x \to \PR_T^\mathrm{I}(\gdl{0 = 0 \land \dot{x} = \dot{x}}))$. 
This shows that $\DCU$ does not hold for $\PR_T^\mathrm{I}(x)$. 

	\item $\PCU$ fails to hold because $\PA \vdash \forall x \PRL(\gdl{0 = 0 \land \dot{x} = \dot{x}})$. 
\end{itemize}
\end{proof}

By Proposition \ref{LP1}, $\PR_T^\mathrm{I}(x)$ satisfies $\BD{2}$, $\D{3}$ and $\PC$. 
Propositions \ref{UP1}.1 and \ref{UP2}.1 imply that $\PR_T^\mathrm{I}(x)$ satisfies neither $\BDU{2}$ nor $\CB$. 

Next we prove that full uniform derivability conditions do not imply any of global derivability conditions except for $\DG{3}$, and that full derivability conditions are not sufficient for the unprovability of $\Con^{\Sigma_1}$ even if $\Phi \in \Sigma_1$. 

\begin{prop}\label{WP2}
There exists a $\Sigma_1$ provability predicate $\PR_T^\mathrm{II}(x)$ of $T$ with: 
\begin{enumerate}
	\item $\PR_T^\mathrm{II}(x)$ satisfies $\DU{1}$, $\DU{2}$, and $\SCU$. 
	\item $\PR_T^\mathrm{II}(x)$ does not satisfy any of $\DG{2}$, $\DCG$ and $\PCG$. 
	\item $\PA \vdash \mathsf{Con}_{\PR_T^\mathrm{II}}^{\Sigma_1}$. 
\end{enumerate}
\end{prop}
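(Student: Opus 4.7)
The plan is to construct $\PR_T^{\mathrm{II}}(x)$ as a Mostowski-style modification of $\PR_T(x)$ in the spirit of Proposition~\ref{exMos}, refusing a specifically chosen $\Sigma_1$ sentence $\sigma_0$ rather than $0 \neq 0$. A natural candidate, following the $\PR_T[\delta]$ template of Lemma~\ref{WL1}, is
\[
\PR_T^{\mathrm{II}}(x) \;:\equiv\; \exists y \bigl( \Prf_T(x, y) \land \neg \Prf_T(\gdl{\sigma_0}, y) \bigr),
\]
where $\sigma_0$ is a fixed $T$-unprovable $\Sigma_1$ sentence (one can start from $\sigma_0 \equiv 0 = 1$, refining by a self-referential fixed point as needed so that the refusal of $\sigma_0$ can be made $\PA$-visible without breaking $\DU{2}$). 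Since proofs code unique conclusions, one obtains $\PA \vdash \PR_T^{\mathrm{II}}(\overline n) \leftrightarrow \PR_T(\overline n)$ for each $n = \gdl{\varphi}$ with $\varphi \neq \sigma_0$, while $\PR_T^{\mathrm{II}}(\gdl{\sigma_0})$ is definitionally self-refuting; this simultaneously gives weak representation of $T$ and the target $\PA \vdash \mathsf{Con}^{\Sigma_1}_{\PR_T^{\mathrm{II}}}$ (witnessed by $\sigma_0$).

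For $\DU{1}$ and $\SCU$ the verification proceeds by case analysis on whether some substitution $\varphi(\vec{\dot n})$ syntactically equals $\sigma_0$: in the $\DU{1}$ case this would force $T \vdash \sigma_0$, contradicting the choice of $\sigma_0$; in the $\SCU$ case $\sigma_0$ is false, so at any offending $\vec x$ the hypothesis $\sigma(\vec x)$ is refuted and the implication is vacuous. Outside this case $\PR_T^{\mathrm{II}}$ coincides with $\PR_T$ and the corresponding uniform condition transfers directly. The failures of $\DG{2}$, $\DCG$ and $\PCG$ are witnessed by instantiating their global quantifiers so as to make the refusal of $\sigma_0$ visible: in a nonstandard model of $S$ in which $T$ appears inconsistent, the relevant hypotheses---e.g.\ $\PR_T^{\mathrm{II}}(x \mathbin{\dot\to} \gdl{\sigma_0}) \land \PR_T^{\mathrm{II}}(x)$ for $\DG{2}$, $\mathsf{True}_{\Delta_0}(\gdl{\sigma_0} \mathbin{\dot\to} \cdots)$-type instances for $\DCG$, and $\PRL(\cdot)$-driven analogues for $\PCG$---become satisfiable while $\PR_T^{\mathrm{II}}(\gdl{\sigma_0})$ remains $\PA$-refutable.

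The main technical obstacle is $\DU{2}$ in the case where the external conclusion $\psi$ happens to equal $\sigma_0$: combining $\PR_T^{\mathrm{II}}$-witnesses $y_1$ for $\varphi \to \sigma_0$ and $y_2$ for $\varphi$ via modus ponens yields a $T$-proof $y_3$ of $\sigma_0$, which violates the side condition $\neg \Prf_T(\gdl{\sigma_0}, y_3)$, whereas the conclusion $\PR_T^{\mathrm{II}}(\gdl{\sigma_0})$ is $\PA$-refutable. This is precisely the obstruction that caused $\PR_T^M$ to fail $\D{2}$ in Proposition~\ref{exMos}, and by Corollary~\ref{UC2} we cannot hope to repair it by passing to $\mathsf{Con}^L$. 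Overcoming it therefore requires either a more delicate self-referential choice of $\sigma_0$ or an adjustment of the $\PR_T[\delta]$ template so that the refusal of $\sigma_0$ propagates back through implications, rendering the hypothesis $\PR_T^{\mathrm{II}}(\gdl{\varphi \to \sigma_0})$ itself $\PA$-refutable whenever $\PR_T^{\mathrm{II}}(\gdl{\varphi})$ holds; engineering this propagation without destroying either weak representation or $\SCU$ is where the bulk of the proof's care must be directed.
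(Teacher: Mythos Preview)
Your proposal has a genuine gap: the single-sentence Mostowski template cannot be made to satisfy $\DU{2}$, and the fixes you gesture at do not address the real obstruction. Suppose $\PA \vdash \neg \PR_T^{\mathrm{II}}(\gdl{\sigma_0})$ for a fixed $\Sigma_1$ sentence $\sigma_0$ and that $\PR_T^{\mathrm{II}}$ satisfies $\D{1}$ and $\D{2}$ (both consequences of $\DU{1},\DU{2}$). From $\D{1}$ we get $S \vdash \PR_T^{\mathrm{II}}(\gdl{0=0})$, and then $\D{2}$ forces $S \vdash \neg \PR_T^{\mathrm{II}}(\gdl{0=0 \to \sigma_0})$. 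But $0=0 \to \sigma_0$ is syntactically distinct from $\sigma_0$, so under your construction $\PR_T^{\mathrm{II}}(\gdl{0=0 \to \sigma_0})$ is $\PA$-equivalent to $\PR_T(\gdl{0=0 \to \sigma_0})$; since $\sigma_0$ must be $S$-refutable (by $\SC$ plus $\PA \vdash \neg \PR_T^{\mathrm{II}}(\gdl{\sigma_0})$), the latter is $S$-equivalent to $\PR_T(\gdl{0 \neq 0})$, which $S$ cannot refute. So the approach collapses: $\D{2}$ forces the predicate to refuse \emph{infinitely many} syntactically distinct sentences (every $\varphi \to \sigma_0$ with $\varphi$ a $T$-theorem), and no choice of a single $\sigma_0$ or self-referential tweak of it repairs this.

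The paper's construction avoids this by abandoning the idea of a fixed refusable witness altogether. It sets $\PR_T^{\mathrm{II}}(x) := \PR_T[n(x) \leq z \lor \Even(x)](x)$, where $n(\varphi)$ counts occurrences of $\neg$. The point is that $n(\gdl{\varphi(\vec{\dot x})})$ is a \emph{fixed standard number} independent of $\vec x$, so for every uniform family $\PA$ proves $\PR_T^{\mathrm{II}}(\gdl{\varphi(\vec{\dot x})}) \leftrightarrow \PR_T(\gdl{\varphi(\vec{\dot x})})$, and $\DU{1},\DU{2},\SCU$ transfer wholesale from $\PR_T$. Globally, however, $n(x)$ is unbounded, which kills $\DG{2},\DCG,\PCG$. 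Crucially, $\mathsf{Con}^{\Sigma_1}$ is proved not by exhibiting a fixed unprovable $\Sigma_1$ sentence but by a case split inside $\PA$: if $\neg \PR_T(\gdl{0\neq 0})$ then $0\neq 0$ works; if $\Prf_T(\gdl{0\neq 0},z)$ for some $z$, pick a $\Sigma_1$ sentence with $n(x) > z$ and $\neg \Even(x)$. The witness is model-dependent, which is exactly what lets the construction coexist with $\D{2}$.
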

\begin{proof}
For each formula $\varphi$, let $n(\varphi)$ be the number of occurrences of the symbol $\neg$ in $\varphi$. 
We may use a function symbol $n(x)$ corresponding to this function such that $\PA \vdash \forall x(\Fml(x) \to n(x) \leq x)$. 

Let $\PR_T^\mathrm{II}(x)$ be the $\Sigma_1$ formula $\PR_T[n(x) \leq z \lor \Even(x)](x)$. 
Then $\PR_T^\mathrm{II}(x)$ is a $\Sigma_1$ provability predicate of $T$ by Lemma \ref{WL1}.1. 
Let $\varphi(\vec{x})$ be any formula. 
Then $\PA \vdash \forall \vec{x} (n(\gdl{\varphi(\vec{\dot{x}})}) = \overline{k})$ for some natural number $k$. 
Since $\PA \vdash \forall z(\Prf_T(\gdl{0\neq 0}, z) \to n(\gdl{\varphi(\vec{\dot{x}})}) \leq z \lor \Even(\gdl{\varphi(\vec{\dot{x}})}))$, we obtain $\PA \vdash \forall \vec{x}(\PR_T(\gdl{\varphi(\vec{\dot{x}})}) \leftrightarrow \PR_T^\mathrm{II}(\gdl{\varphi(\vec{\dot{x}})}))$ by Lemma \ref{WL1}.2. 
Therefore $\PR_T^\mathrm{II}(x)$ satisfies $\DU{1}$, $\DU{2}$ and $\SCU$.  

By Lemma \ref{WL1}.3, we have
\begin{align}\label{eq2-1}
	\PA \vdash \forall x \forall z (\Prf_T(\gdl{0 \neq 0}, z) \land \Fml(x) \land \PR_T^\mathrm{II}(x) \to (n(x) \leq z \lor \Even(x)))
\end{align}
because $\PA \vdash \forall x(\Fml(x) \land x \leq z \to n(x) \leq z \lor \Even(x))$. 

As in Proposition \ref{WP1}, failure of $\DG{2}$, $\DCG$ and $\PCG$ for $\PR_T^\mathrm{II}(x)$ follow from (\ref{eq2-1}) and the facts $\PA \vdash \forall z \exists y (\Fml(y) \land n(y) > z \land \neg \Even(y))$, $\PA \vdash \forall z \exists y (\mathsf{True}_{\Delta_0}(y) \land n(y) > z \land \neg \Even(y))$ and $\PA \vdash \forall z \exists y (\PRL(y) \land n(y) > z \land \neg \Even(y))$, respectively. 

We prove $\PA \vdash \mathsf{Con}_{\PR_T^\mathrm{II}}^{\Sigma_1}$. 
By (\ref{eq2-1}) and $\PA \vdash \forall z \exists x (\Sigma_1(x) \land \Sent(x) \land n(x) > z \land \neg \Even(x))$, we have
\[
	\PA \vdash \forall z (\Prf_T(\gdl{0 \neq 0}, z) \to \exists x (\Sigma_1(x) \land \Sent(x) \land \neg \PR_T^\mathrm{II}(x))). 
\]
It follows $\PA \vdash \PR_T^\mathrm{II}(\gdl{0\neq 0}) \to \mathsf{Con}_{\PR_T^\mathrm{II}}^{\Sigma_1}$. 
On the other hand, obviously $\PA \vdash \neg \PR_T^\mathrm{II}(\gdl{0 \neq 0}) \to \mathsf{Con}_{\PR_T^\mathrm{II}}^{\Sigma_1}$. 
Therefore we conclude $\PA \vdash \mathsf{Con}_{\PR_T^\mathrm{II}}^{\Sigma_1}$. 
\end{proof}
From Propositions \ref{UP1} and \ref{UP2}, $\PR_T^\mathrm{II}(x)$ satisfies $\BDU{2}$, $\CB$ and $\PCU$. 
By Theorem \ref{G2}, $T \nvdash \mathsf{Con}_{\PR_T^\mathrm{II}}^L$. 

We prove that the conditions $\Phi \in \Sigma_1$, $\DU{1}$, $\DG{2}$ and $\SCG$ are not sufficient for the unprovability of G\"odel's consistency statement $\Con^G$. 

\begin{prop}\label{WP3}
There exists a $\Sigma_1$ provability predicate $\PR_T^\mathrm{III}(x)$ of $T$ with: 
\begin{enumerate}
	\item $\PR_T^\mathrm{III}(x)$ satisfies $\DU{1}$, $\DG{2}$ and $\SCG$. 
	\item $\PA \vdash \mathsf{Con}_{\PR_T^\mathrm{III}}^G$. 
\end{enumerate}
\end{prop}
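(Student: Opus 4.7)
The plan is to adapt the construction from the proof of Proposition \ref{WP2} by substituting the syntactic $\Sigma_1$-check $\Sigma_1(x)$ for the parity condition $\Even(x)$, arranging that under a hypothetical inconsistency of $T$ the formulas excluded from $\PR_T^{\mathrm{III}}$ are precisely the non-$\Sigma_1$ ones. Reusing the primitive recursive term $n(x)$ counting occurrences of $\neg$ in $x$ (so that $\PA \vdash \forall x(\Fml(x) \to n(x) \leq x)$ and, by the defining recursion, $\PA \vdash n(x \dot{\to} y) \geq n(y)$), I set
\[
	\delta(x, z) :\equiv \Sigma_1(x) \lor n(x) \leq z, \qquad \PR_T^{\mathrm{III}}(x) :\equiv \PR_T[\delta](x).
\]
By Lemma \ref{WL1}.1, $\PR_T^{\mathrm{III}}(x)$ is then a $\Sigma_1$ provability predicate of $T$.

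I would establish $\SCG$ and $\DU{1}$ by applying Lemma \ref{WL1}.2. Since $\PA \vdash \mathsf{True}_{\Sigma_1}(x) \to \Sigma_1(x)$, the $\delta$-hypothesis is automatic on true $\Sigma_1$ sentences, so $\PR_T^{\mathrm{III}}$ coincides with $\PR_T$ there and $\SCG$ for $\PR_T$ transfers. For $\DU{1}$, given any $\psi(\vec{x})$ with $T \vdash \forall \vec{x}\, \psi(\vec{x})$, substituting numerals introduces no $\neg$, whence $\PA \vdash \forall \vec{x}(n(\gdl{\psi(\vec{\dot{x}})}) = \overline{k})$ for the fixed natural number $k = n(\psi)$; a finite decidability check gives $\PA \vdash \forall z < \overline{k}\, \neg \Prf_T(\gdl{0 \neq 0}, z)$, so the $\delta$-hypothesis holds uniformly in $\vec{x}$, and $\DU{1}$ inherits from $\DU{1}$ for $\PR_T$.

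The crux of the proof is $\DG{2}$, which I expect to be the main obstacle because of the delicate bookkeeping it requires. In $S$, fix witnesses $y_1, y_2$ for $\PR_T^{\mathrm{III}}(x \dot{\to} y)$ and $\PR_T^{\mathrm{III}}(x)$, and let $y_3$ be a $\Prf_T$-witness for $y$ produced by $\DG{2}$ for $\PR_T$. For each $z < y_3$ with $\Prf_T(\gdl{0 \neq 0}, z)$ I would split on whether $z < y_1$. If $z < y_1$, the $y_1$-clause supplies $\delta(x \dot{\to} y, z)$, and the two $\PA$-provable syntactic facts $\Sigma_1(x \dot{\to} y) \to \Sigma_1(y)$ (inspection of the grammar: an implication can enter $\Sigma_1$ only via rule 4, which forces the consequent into $\Sigma_1$) and $n(x \dot{\to} y) \geq n(y)$ transport this into $\delta(y, z)$. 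Otherwise $z \geq y_1 \geq x \dot{\to} y \geq n(x \dot{\to} y) \geq n(y)$, so $\delta(y, z)$ is immediate from the $n(x) \leq z$ disjunct.

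Finally, for $\PA \vdash \Con^G$ I would split on whether $\PR_T(\gdl{0 \neq 0})$ holds. If not, then $\gdl{0 \neq 0}$ witnesses $\Con^G$ since $\PR_T^{\mathrm{III}} \subseteq \PR_T$. Otherwise let $p_0$ be the least $\Prf_T$-witness of $0 \neq 0$ and consider
\[
	\varphi_{p_0} :\equiv \forall y_0\, \underbrace{\neg\neg\cdots\neg}_{p_0 + 1}(y_0 = y_0),
\]
a formula $\PA$-provably satisfying $\neg \Sigma_1(\gdl{\varphi_{p_0}})$ (its outermost symbol $\forall$ rules out the $\Sigma_1$ grammar) together with $n(\varphi_{p_0}) = p_0 + 1$, and hence $\gdl{\varphi_{p_0}} \geq n(\varphi_{p_0}) > p_0$. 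Any $\Prf_T$-witness $y$ for $\varphi_{p_0}$ then satisfies $y \geq \gdl{\varphi_{p_0}} > p_0$, so the $\delta$-clause at $z = p_0$ demands $\delta(\gdl{\varphi_{p_0}}, p_0)$, which is false by construction; thus $\neg \PR_T^{\mathrm{III}}(\gdl{\varphi_{p_0}})$, and $\Con^G$ follows.
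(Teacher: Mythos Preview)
Your construction is correct, but it differs from the paper's choice of $\delta$. The paper takes the cleaner
\[
\delta(x,z) :\equiv \Sigma_z(x),
\]
so that a single complexity measure handles everything: each fixed $\varphi(\vec{x})$ lies in some $\Sigma_k$ uniformly in the numeral substitutions (giving $\DU{1}$ via Lemma~\ref{WL1}.2), the hypothesis of Lemma~\ref{WL1}.3 is immediate from $\PA \vdash \Fml(x) \land x \leq z \to \Sigma_z(x)$, and $\DG{2}$ follows from the single syntactic fact $\Sigma_z(x \dot{\to} y) \to \Sigma_z(y)$ together with Lemma~\ref{WL1}.2. The witness for $\Con^G$ is then any formula outside $\Sigma_{p_0}$.

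Your hybrid $\delta(x,z) = \Sigma_1(x) \lor n(x) \leq z$ works too, and it is a natural extrapolation from the Proposition~\ref{WP2} construction: you reuse the $\neg$-count to control $\DU{1}$ and to manufacture the $\Con^G$ witness, and you bolt on the $\Sigma_1$ disjunct to secure $\SCG$. The cost is that your $\DG{2}$ argument needs \emph{two} syntactic closure facts ($\Sigma_1(x \dot{\to} y) \to \Sigma_1(y)$ and $n(x \dot{\to} y) \geq n(y)$) plus a case split on $z < y_1$ versus $z \geq y_1$; in fact your case split is exactly a hand re-derivation of Lemma~\ref{WL1}.3 for your particular $\delta$, and you could have invoked that lemma directly (its hypothesis $\Fml(x) \land x \leq z \to \delta(x,z)$ holds via the $n(x) \leq x$ bound). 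The paper's $\Sigma_z$ choice thus buys a shorter and more uniform argument; your approach buys a more modular one that recycles the Proposition~\ref{WP2} machinery.
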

\begin{proof}
Let $\PR_T^\mathrm{III}(x)$ be the formula $\PR_T[\Sigma_z(x)](x)$. 
Then by Lemma \ref{WL1}.1, $\PR_T^\mathrm{III}(x)$ is a $\Sigma_1$ provability predicate of $T$. 
For any formula $\varphi(\vec{x})$, we have $\PA \vdash \forall z \forall \vec{x} (\Prf_T(\gdl{0 \neq 0}, z) \to \Sigma_z(\gdl{\varphi(\vec{\dot{x}})}))$ because $\PA \vdash \forall z \geq \overline{k} \Sigma_z(\gdl{\varphi(\vec{\dot{x}})})$ for some natural number $k$. 
Hence $\PA \vdash \PR_T(\gdl{\varphi(\vec{\dot{x}})}) \leftrightarrow \PR_T^\mathrm{III}(\gdl{\varphi(\vec{\dot{x}})})$ by Lemma \ref{WL1}.2. 
Thus $\DU{1}$ holds for $\PR_T^\mathrm{III}(x)$. 

Since $\PA \vdash \forall x \forall z(\Fml(x) \land x \leq z \to \Sigma_z(x))$, we have
\begin{align}\label{eq3-1}
	\PA \vdash \forall x \forall z(\Prf_T(\gdl{0 \neq 0}, z) \land \Fml(x) \land \PR_T^\mathrm{III}(x) \to \Sigma_z(x)) 
\end{align}
by Lemma \ref{WL1}.3. 
Then 
\[
	\PA \vdash \Fml(x) \land \Fml(y) \land \PR_T^\mathrm{III}(x \dot{\to} y) \to (\Prf_T(\gdl{0 \neq 0}, z) \to \Sigma_z(x \dot{\to} y)).
\] 
Thus 
\[
	\PA \vdash \Fml(x) \land \Fml(y) \land \PR_T^\mathrm{III}(x \dot{\to} y) \to \forall z(\Prf_T(\gdl{0 \neq 0}, z) \to \Sigma_z(y)).
\] 
By Lemma \ref{WL1}.2, 
\begin{align}\label{eq3-2}
	\PA \vdash \Fml(x) \land \Fml(y) \land \PR_T^\mathrm{III}(x \dot{\to} y) \to (\PR_T(y) \leftrightarrow \PR_T^\mathrm{III}(y)).
\end{align}
Since $\PA \vdash \PR_T^\mathrm{III}(x \dot{\to} y) \land \PR_T^\mathrm{III}(x) \to \PR_T(x \dot{\to} y) \land \PR_T(x)$, we have 
\[
	\PA \vdash \Fml(x) \land \Fml(y) \land \PR_T^\mathrm{III}(x \dot{\to} y) \land \PR_T^\mathrm{III}(x) \to  \PR_T(y)
\]
by $\DG{2}$ for $\PR_T(x)$. 
From this with (\ref{eq3-2}), 
\[
	\PA \vdash \Fml(x) \land \Fml(y) \land \PR_T^\mathrm{III}(x \dot{\to} y) \land \PR_T^\mathrm{III}(x) \to  \PR_T^\mathrm{III}(y).
\] 
This means $\DG{2}$ holds for $\PR_T^\mathrm{III}(x)$. 

Since $\PA \vdash \mathsf{True}_{\Sigma_1}(x) \to \Sigma_1(x)$, $\PA \vdash \mathsf{True}_{\Sigma_1}(x) \to (\Prf_T(\gdl{0 \neq 0}, z) \to \Sigma_z(x))$. 
By Lemma \ref{WL1}.2, $\PA \vdash \mathsf{True}_{\Sigma_1}(x) \to (\PR_T(x) \leftrightarrow \PR_T^\mathrm{III}(x))$.
By $\SCG$ for $\PR_T(x)$, we obtain $\PA \vdash \mathsf{True}_{\Sigma_1}(x) \to \PR_T^\mathrm{III}(x)$. 

By (\ref{eq3-1}) and $\PA \vdash \forall z \exists x(\Fml(x) \land \neg \Sigma_z(x))$, we have $\PA \vdash \PR_T(\gdl{0 \neq 0}) \to \exists x (\Fml(x) \land \neg \PR_T^\mathrm{III}(x))$. 
Thus $\PA \vdash \PR_T(\gdl{0 \neq 0}) \to \mathsf{Con}_{\PR_T^\mathrm{III}}^G$. 
On the other hand, since $\PA \vdash \neg \PR_T(\gdl{0 \neq 0}) \to \neg \PR_T^\mathrm{III}(\gdl{0 \neq 0})$, we have $\PA \vdash \neg \PR_T(\gdl{0 \neq 0}) \to \mathsf{Con}_{\PR_T^\mathrm{III}}^G$. 
Therefore $\PA \vdash \mathsf{Con}_{\PR_T^\mathrm{III}}^G$. 
\end{proof}
By Propositions \ref{UP1} and \ref{UP2}, $\PR_T^\mathrm{III}(x)$ satisfies $\BDU{2}$, $\CB$ and $\PCU$. 
Corollary \ref{GC1} implies that $\PCG$ fails to hold for $\PR_T^\mathrm{III}(x)$ and $T \nvdash \mathsf{Con}_{\PR_T^\mathrm{III}}^{\Sigma_1}$. 

We prove that there exists a $\Sigma_1$ provability predicate which satisfies the Hilbert--Bernays--L\"ob derivability conditions, but does not satisfy $\SC$. 
The following proof is based on the construction presented in Section 5 of Visser \cite{Vis16}. 

\begin{prop}\label{WP4}
There exists a $\Sigma_1$ provability predicate $\PR_T^\mathrm{IV}(x)$ of $T$ which satisfies $\D{1}$, $\DG{2}$ and $\DG{3}$, but does not satisfy $\SC$. 
\end{prop}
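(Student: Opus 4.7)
The plan is to follow the slow-provability style construction from Section~5 of Visser~\cite{Vis16}. I would set
\[
\PR_T^{\mathrm{IV}}(x) :\equiv \exists y\bigl(\Prf_T(x,y) \land C(x,y)\bigr),
\]
where $C(x,y)$ is a carefully designed low-complexity (ideally $\Delta_0$) side condition on proof codes with three calibrated properties: (a)~for every standard natural number $n$ coding a $T$-proof, $\PA \vdash C(\gdl{\varphi},\overline{n})$; (b)~$S$ proves that $C$ is closed under the primitive recursive combinators that witness modus ponens and internalized provability for the underlying proof predicate $\Prf_T$; (c)~there is a specific true $\Sigma_1$ sentence $\sigma$ whose least $T$-proof $n_\sigma$ is ``too far out'' to be witnessed in $S$ from $\sigma$ alone.

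The verification would then fall into three steps. First, for $\D{1}$: if $T \vdash \varphi$ and $n$ is a specific $T$-proof of $\varphi$, then $\PA$ proves both $\Prf_T(\gdl{\varphi},\overline{n})$ and $C(\gdl{\varphi},\overline{n})$ by (a), so $S \vdash \PR_T^{\mathrm{IV}}(\gdl{\varphi})$. Second, for $\DG{2}$ and $\DG{3}$: the standard global proofs for $\PR_T$ proceed by exhibiting primitive recursive functions $g_{2}(y_1,y_2)$ (combining proofs of $x\dot\to z$ and $x$ into a proof of $z$) and $g_{3}(x,y)$ (transforming a proof of $\varphi$ into a proof of $\PR_T^{\mathrm{IV}}(\gdl{\varphi})$ using the $\Sigma_1$-completeness of $\Prf_T$ on numerals). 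By (b), $S$ proves that $C$ is preserved under $g_2$ and $g_3$, and the global conditions drop out by internalizing the standard arguments in $S$. Third, for failure of $\SC$: one uses a fixed-point (or a diagonal $\Sigma_1$ sentence tailored to $C$) to produce $\sigma$ for which no $T$-proof code meeting $C$ can be witnessed from $\sigma$ alone in $S$, so $S \nvdash \sigma \to \PR_T^{\mathrm{IV}}(\gdl{\sigma})$.

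The main obstacle is the tension between (b) and (c): $C$ must be strong enough that the closure proofs required for $\DG{2}$ and $\DG{3}$ go through uniformly in $S$, yet weak enough that some true $\Sigma_1$ sentence is not $S$-provably witnessed by a $C$-respecting proof. Visser's trick is to let $C(x,y)$ encode a bound on $y$ in terms of a slow-growing primitive recursive function whose iterates grow fast enough to absorb the increments introduced by $g_2$ and $g_3$, but whose ``reach'' at low arguments is too small to certify $C$ on the enormous proof of a suitably chosen $\sigma$. Once $C$ is fixed with these properties, the three global derivability conditions follow by a routine internalization in $S$, while the failure of $\SC$ is witnessed by $\sigma$.
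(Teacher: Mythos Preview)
Your outline has a genuine gap, and it also misidentifies the construction in Section~5 of Visser~\cite{Vis16}.

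The gap is in your treatment of $\DG{3}$. You propose to fix a side condition $C(x,y)$ in advance and then check that $S$ proves closure under a combinator $g_3$ that turns a proof of $\varphi$ into a proof of $\PR_T^{\mathrm{IV}}(\gdl{\varphi})$. But the target formula $\PR_T^{\mathrm{IV}}(\gdl{\varphi})$ already contains $C$, so $g_3$ depends on $C$ through the very definition of $\PR_T^{\mathrm{IV}}$; and then you require $C$ to be closed under this $g_3$. That is a real circularity which cannot be resolved by choosing $C$ first and verifying closure afterwards: a fixed point is needed for the predicate $\PR_T^{\mathrm{IV}}$ itself, not only for the diagonal sentence $\sigma$. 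Your sketch invokes a fixed point only for $\sigma$. The ``slow-growing bound'' picture you describe does nothing to break this circularity, and in any case is not what Visser's Section~5 does.

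What the paper actually does, following Visser, is quite different. One introduces a modal derivability relation $T \vdash_{\Phi,n} \varphi$: start from the finite set $\Th_n(T)$ of $T$-theorems with proof code $\leq n$, regard it as a set of modal axioms via a bijection with propositionally atomic formulas, and close under modus ponens \emph{and Necessitation for $\Box$}. Then a \emph{simultaneous} fixed point produces $\PR_T^{\mathrm{IV}}$ and a $\Sigma_1$ sentence $\sigma$ so that $\PR_T^{\mathrm{IV}}(x)$ is equivalent to ``$\exists n\bigl(T \vdash_{\PR_T^{\mathrm{IV}},n} x$ and $\forall m<n\ T \nvdash_{\PR_T^{\mathrm{IV}},m} \neg\sigma\bigr)$'', with $\sigma$ a Rosser-style companion. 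Because Necessitation is literally a rule of the auxiliary calculus and the fixed point makes $\Box$ coincide with $\PR_T^{\mathrm{IV}}$, the condition $\DG{3}$ becomes a one-line observation: from $T \vdash_{\PR_T^{\mathrm{IV}},n} \varphi$ one gets $T \vdash_{\PR_T^{\mathrm{IV}},n} \PR_T^{\mathrm{IV}}(\gdl{\varphi})$ with the \emph{same} $n$, so the Rosser side condition is preserved automatically. Likewise $\DG{2}$ is immediate from modus ponens. The failure of $\SC$ is then obtained by a witness-comparison argument between $\sigma$ and $\neg\sigma$, not from any growth-rate bound on proof codes.

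In short, the missing idea is that the provability predicate must itself be defined self-referentially so that the Necessitation rule of the auxiliary system genuinely internalizes $\PR_T^{\mathrm{IV}}$; your proposed shape $\exists y(\Prf_T(x,y)\land C(x,y))$ with $C$ chosen beforehand cannot deliver $\DG{3}$ for the reason above.
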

\begin{proof}
We say an $\mathcal{L}_A$-formula $\varphi$ is \textit{propositionally atomic} if it is not a Boolean combination of proper subformulas of $\varphi$. 
We fix a bijective mapping $f$ from the set of all propositional variables to the set of all propositionally atomic formulas. 
For each propositionally atomic formula $\Phi(x)$, the mapping $f$ can be extended to the mapping $f_{\Phi}$ from the set of all modal formulas to the set of all $\mathcal{L}_A$-formulas satisfying the following clauses: 
\begin{enumerate}
	\item $f_\Phi(p)$ is $f(p)$ for each propositional variable $p$; 
	\item $f_\Phi$ commutes with every propositional connective; 
	\item $f_\Phi(\Box A)$ is $\Phi(\gdl{f_\Phi(A)})$. 
\end{enumerate}

For any finite set $X$ of modal formulas and any modal formula $A$, $A$ is said to be derived in $X$ if $A$ is provable in the system whose axioms are elements of $X$ and whose inference rules are Modus Ponens $\dfrac{B\ \ B \to C}{C}$ and Necessitation $\dfrac{B}{\Box B}$. 

For each natural number $n$, let $\Th_n(T)$ be the finite set of all $\mathcal{L}_A$-formulas having a $T$-proof whose G\"odel number is less than or equal to $n$. 
We write $T \vdash_{\Phi, n} \varphi$ if there exist a finite set $X$ of modal formulas and a modal formula $A$ such that $f_\Phi(X) = \Th_n(T)$, $f_\Phi(A)$ is $\varphi$ and $A$ is derived in $X$. 
For $m < n$, $T \vdash_{\Phi, m} \varphi$ implies $T \vdash_{\Phi, n} \varphi$ because $\Th_m(T) \subseteq \Th_n(T)$. 
As shown in Visser \cite{Vis16}, the ternary relation $T \vdash_{\Phi, n} \varphi$ is computable. 
Thus we obtain a $\Delta_1$ formula $P_T(\gdl{\Phi}, x, y)$ saying that $x$ is the G\"odel number of a formula $\varphi$ satisfying $T \vdash_{\Phi, y} \varphi$. 

By the Fixed Point Lemma, there exist a $\Sigma_1$ formula $\PR_T^{\mathrm{IV}}(x)$ and a $\Sigma_1$ sentence $\sigma$ satisfying the following equivalences: 
\begin{enumerate}
	\item $P'_T(x, y) \equiv P_T(\gdl{\PR_T^{\mathrm{IV}}}, x, y)$; 
	\item $\PA \vdash \PR_T^{\mathrm{IV}}(x) \leftrightarrow \exists y(P_T'(x, y) \land \forall z < y \neg P_T'(\gdl{\neg \sigma}, z))$; 
	\item $\PA \vdash \sigma \leftrightarrow \exists z(P_T'(\gdl{\neg \sigma}, z) \land \forall y \leq z \neg P_T'(\gdl{\sigma}, y))$. 
\end{enumerate}

First, we prove $T \nvdash_{\PR_T^{\mathrm{IV}}, n} \neg \sigma$ for all $n$ by induction on $n$. 
Suppose $T \nvdash_{\PR_T^{\mathrm{IV}}, m} \neg \sigma$ for all $m < n$. 
Then $\PA \vdash \forall z < \overline{n} \neg P_T'(\gdl{\neg \sigma}, z)$. 

Let $X$ be any finite set of modal formulas with $f_{\PR_T^{\mathrm{IV}}}(X) = \Th_n(T)$. 
Let $A$ be any modal formula derived in $X$, then $T \vdash_{\PR_T^{\mathrm{IV}}, n} f_{\PR_T^{\mathrm{IV}}}(A)$. 
Hence we have $\PA \vdash P_T'(\gdl{f_{\PR_T^{\mathrm{IV}}}(A)}, \overline{n})$, and thus $\PA \vdash \PR_T^{\mathrm{IV}}(\gdl{f_{\PR_T^{\mathrm{IV}}}(A)})$. 
Moreover, we show $T \vdash f_{\PR_T^{\mathrm{IV}}}(A)$. 
This is proved by induction on the length of derivation in $X$. 
If $A \in X$, then $f_{\PR_T^{\mathrm{IV}}}(A) \in \Th_n(T)$, and $f_{\PR_T^{\mathrm{IV}}}(A)$ has a $T$-proof. 
If $A$ is derived from $B$ and $B \to A$ by Modus Ponens and $T \vdash f_{\PR_T^{\mathrm{IV}}}(B) \land f_{\PR_T^{\mathrm{IV}}}(B \to A)$, then $T \vdash f_{\PR_T^{\mathrm{IV}}}(A)$. 
If $A$ is derived from $B$ by Necessitation, then $A$ is of the form $\Box B$. 
Since $\PA \vdash \PR_T^{\mathrm{IV}}(\gdl{f_{\PR_T^{\mathrm{IV}}}(B)})$ as above, we get $\PA \vdash f_{\PR_T^{\mathrm{IV}}}(A)$. 
In this paragraph, we have shown that if $T \vdash_{\PR_T^{\mathrm{IV}}, n} \varphi$, then $T \vdash \varphi$. 

Suppose, towards a contradiction, $T \vdash_{\PR_T^{\mathrm{IV}}, n} \neg \sigma$. 
Then $T \vdash \neg \sigma$. 
Since $T \nvdash \sigma$, $T \nvdash_{\PR_T^{\mathrm{IV}}, m} \sigma$ for all $m \leq n$. 
Therefore $\PA \vdash P_T'(\gdl{\neg \sigma}, \overline{n}) \land \forall y \leq \overline{n} \neg P_T'(\gdl{\sigma}, y)$. 
By the definition of $\sigma$, we have $\PA \vdash \sigma$. 
This is a contradiction. 
We obtain $T \nvdash_{\PR_T^{\mathrm{IV}}, n} \neg \sigma$. 

If $T \vdash \varphi$, then $\varphi \in \Th_n(T)$ for some $n$. 
Then $T \vdash_{\PR_T^{\mathrm{IV}}, n} \varphi$ trivially holds, and hence $\PA \vdash P_T'(\gdl{\varphi}, \overline{n})$. 
Since $\PA \vdash \forall z < \overline{n} P_T'(\gdl{\neg \sigma}, z)$, we obtain $\PA \vdash \PR_T^{\mathrm{IV}}(\gdl{\varphi})$. 
On the other hand, we assume $\PA \vdash \PR_T^{\mathrm{IV}}(\gdl{\varphi})$. 
Then $P_T'(\gdl{\varphi}, \overline{n})$ is true in the standard model of arithmetic for some $n$. 
This means $T \vdash_{\PR_T^{\mathrm{IV}}, n} \varphi$. 
Then we obtain $T \vdash \varphi$. 
Therefore we have shown that $\PR_T^{\mathrm{IV}}(x)$ is a $\Sigma_1$ provability predicate of $T$. 

We prove $\DG{2}$ for $\PR_T^{\mathrm{IV}}(x)$. 
We work in $S$. 
Suppose $\PR_T^{\mathrm{IV}}(\gdl{\varphi})$ and $\PR_T^{\mathrm{IV}}(\gdl{\varphi \to \psi})$ are true. 
Then for some $n$, $T \vdash_{\PR_T^\mathrm{IV}, n} \varphi$, $T \vdash_{\PR_T^\mathrm{IV}, n} \varphi \to \psi$ and $T \nvdash_{\PR_T^\mathrm{IV}, m} \neg \sigma$ for all $m < n$. 
Then $T \vdash_{\PR_T^\mathrm{IV}, n} \psi$. 
Thus $\PR_T^{\mathrm{IV}}(\gdl{\psi})$ is true. 

We prove $\DG{3}$ for $\PR_T^{\mathrm{IV}}(x)$. 
We proceed in $S$. 
Suppose $\PR_T^{\mathrm{IV}}(\gdl{\varphi})$ is true. 
Then for some $n$, $T \vdash_{\PR_T^\mathrm{IV}, n} \varphi$ and $T \nvdash_{\PR_T^\mathrm{IV}, m} \neg \sigma$ for all $m < n$. 
Then $T \vdash_{\PR_T^\mathrm{IV}, n} \PR_T^{\mathrm{IV}}(\gdl{\varphi})$. 
Thus $\PR_T^{\mathrm{IV}}(\gdl{\PR_T^{\mathrm{IV}}(\gdl{\varphi})})$ is true. 

At last, we prove that $\SC$ fails to hold. 
Suppose, for a contradiction, $T \vdash \sigma \to \PR_T^{\mathrm{IV}}(\gdl{\sigma})$. 
By witness comparison argument, we have $\PA \vdash \sigma \to \neg \PR_T^{\mathrm{IV}}(\gdl{\sigma})$. 
Thus $T \vdash \neg \sigma$. 
Then $T \vdash_{\PR_T^\mathrm{IV}, n} \neg \sigma$ for some $n$. 
This is a contradiction. 
Therefore we conclude $T \nvdash \sigma \to \PR_T^{\mathrm{IV}}(\gdl{\sigma})$. 
\end{proof}
By Proposition \ref{LP1}, Theorem \ref{MT}, Proposition \ref{UP1}.3 and Proposition \ref{UP2}.1, $\PR_T^\mathrm{IV}(x)$ does not satisfy any of $\PC$, $\BDU{2}$, $\DU{1}$ and $\CB$.

The next two propositions show that $\{\D{1}, \SC\}$ and $\{\D{1}, \PC\}$ are incomparable. 

\begin{prop}\label{WP5}
There exists a $\Sigma_1$ provability predicate $\PR_T^\mathrm{V}(x)$ of $T$ which satisfies $\SCG$, but does not satisfy any of $\DU{1}$ and $\PC$. 
\end{prop}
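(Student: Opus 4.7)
The plan is to construct $\PR_T^\mathrm{V}(x)$ as a disjunction of the form $\mathsf{True}_{\Sigma_1}(x) \vee \Psi(x)$, where $\Psi(x)$ is a $\Sigma_1$ formula that weakly represents $T$-provability but is engineered to fail both $\DU{1}$ and $\PC$. The first disjunct $\mathsf{True}_{\Sigma_1}(x)$ is included solely to force $\SCG$, while $\Psi(x)$ does the work of catching the non-$\Sigma_1$ theorems of $T$ and of carrying the ``bad'' behavior. A natural candidate for $\Psi$ is of the form $\PR_T[\delta](x)$, exactly the construction used in Lemma \ref{WL1} and exploited in Propositions \ref{WP1}--\ref{WP3}, for a carefully chosen $\Delta_1$ formula $\delta(x,z)$.

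The key steps, in order, would be: first, verify that $\PR_T^\mathrm{V}(x)$ is a $\Sigma_1$ provability predicate of $T$. For $T \vdash \varphi$ one argues that either $\varphi$ is a true $\Sigma_1$ sentence (handled by $\mathsf{True}_{\Sigma_1}$, using $\SCG$ for $\PR_\mathsf{Q}$) or else any standard $T$-proof of $\varphi$ witnesses $\Psi(\gdl{\varphi})$ by a finite, $\PA$-verifiable check; and for $T \nvdash \varphi$ one uses $\Sigma_1$-soundness of $\PA$ together with Lemma \ref{WL1}.1. Second, $\SCG$ is immediate from the first disjunct. Third, for the failure of $\DU{1}$, pick a non-$\Sigma_1$ formula $\varphi(\vec{x})$ with $T \vdash \forall \vec{x}\,\varphi(\vec{x})$ and $\PA \vdash \forall \vec x\,\neg\Sigma_1(\gdl{\varphi(\vec{\dot x})})$; since $\mathsf{True}_{\Sigma_1}(\gdl{\varphi(\vec{\dot x})})$ then collapses, showing $S \nvdash \forall \vec x\,\PR_T^\mathrm{V}(\gdl{\varphi(\vec{\dot x})})$ reduces to showing $S \nvdash \forall \vec x\,\Psi(\gdl{\varphi(\vec{\dot x})})$. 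For a suitable $\delta$ (analogous to the definitions of $\PR_T^\mathrm{I}$ and $\PR_T^\mathrm{II}$ in Propositions \ref{WP1} and \ref{WP2}) this is witnessed inside any model of $\PA$ in which $\PR_T(\gdl{0 \neq 0})$ holds: the least proof of $0 \neq 0$ is nonstandard, but the required $\delta$-condition can be arranged so that for sufficiently large (possibly nonstandard) $\vec x$ every proof of $\varphi(\vec{\overline{x}})$ falls on the wrong side of the witness.

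Fourth, for the failure of $\PC$, one must exhibit a specific $\varphi$ with $S \nvdash \PRL(\gdl{\varphi}) \to \PR_T^\mathrm{V}(\gdl{\varphi})$. The formula $\varphi$ must be chosen so that $T \nvdash \varphi$ (otherwise the local finite witness argument used in step one makes $\Psi(\gdl{\varphi})$ already $S$-provable and $\PC$ trivial); this forces $\varphi$ to be one whose $\PRL$-provability is $\PA$-consistent but whose $\Psi$-provability is not inheritable from $\PRL(\gdl{\varphi})$ within $\PA$. The model-theoretic picture is to produce, for such a $\varphi$, a model of $\PA + \PRL(\gdl{\varphi})$ in which $\neg \Psi(\gdl{\varphi})$ holds, using that $\PRL(\gdl{\varphi})$ guarantees only a nonstandard $T$-proof, which $\delta$ can be tuned to reject.

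The main obstacle is precisely calibrating the single formula $\delta(x,z)$ so that (a) $\PR_T^\mathrm{V}$ remains a provability predicate of $T$, (b) the $\delta$-restriction is visible uniformly in $\vec x$ to wreck $\DU{1}$, and (c) simultaneously there is a distinguished $\varphi$ for which $\PRL(\gdl{\varphi})$ does not force the $\delta$-witness — the last being genuinely delicate, since the same finite-check argument that establishes $\D{1}$ on the nose tends to also establish $\PC$ on the nose. Overcoming this will likely require either a self-referential ingredient in the definition of $\delta$ (in the spirit of the fixed-point construction of $\PR_T^{\mathrm{IV}}$ in Proposition \ref{WP4}) or the use of a genuinely non-$\Delta_1$-decidable side condition that is $\PA$-consistent in both directions for the chosen $\varphi$.
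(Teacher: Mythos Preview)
Your proposal is a plan rather than a proof, and you yourself locate its gap: you never specify $\delta$, nor the formula $\varphi$ witnessing failure of $\PC$, and you correctly observe that the finite-witness reasoning securing $\D{1}$ tends to secure $\PC$ as well. This is the genuine obstacle, and the paper resolves it precisely along the self-referential line you conjecture at the end. One fixes a finite subtheory $T_0 \supseteq \mathsf{Q}$ of $T$ with $\bigwedge T_0$ not $\Pi_1$, sets
\[
\Prf_T^\mathrm{V}(x,y) :\equiv \Prf_T(x,y) \land \bigl(\exists z < y\, \Prf_T(\gdl{\neg\sigma}, z) \to \Sigma_1(x)\bigr),
\]
and takes $\sigma$ to be a $\Sigma_1$ fixed point asserting that $\neg\sigma$ has a $T$-proof $z$ with no $\Prf_T^\mathrm{V}$-proof of $\bigwedge T_0 \to \sigma$ below $z$. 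A Rosser-style argument gives $T \nvdash \neg\sigma$, whence $\PR_T^\mathrm{V}$ is a $\Sigma_1$ provability predicate. Note that no separate $\mathsf{True}_{\Sigma_1}$ disjunct is needed: $\Sigma_1(x)$ already trivializes the filter, so $\PA \vdash \Sigma_1(x) \to (\PR_T(x) \leftrightarrow \PR_T^\mathrm{V}(x))$ and $\SCG$ follows from $\SCG$ for $\PR_T$.

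The formula witnessing failure of $\PC$ is $\bigwedge T_0 \to \sigma$, which is not $\Sigma_1$. If $\PC$ held, then via formalized deduction and $\SC$ for $\PR_{[T_0]}$ one would get $S \vdash \sigma \to \PR_T^\mathrm{V}(\gdl{\bigwedge T_0 \to \sigma})$. But since $\bigwedge T_0 \to \sigma$ is not $\Sigma_1$, the filter forces any $\Prf_T^\mathrm{V}$-proof of it to lie at or below every $T$-proof of $\neg\sigma$; unwinding the fixed-point definition of $\sigma$ then yields $\PA \vdash \PR_T^\mathrm{V}(\gdl{\bigwedge T_0 \to \sigma}) \to \neg\sigma$. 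Combining gives $S \vdash \neg\sigma$, contradicting $T \nvdash \neg\sigma$. Failure of $\DU{1}$ follows by a parallel argument: for any non-$\Sigma_1$ $\varphi(x)$ with $T \vdash \forall x\,\varphi(x)$, the filter forces $\PA \vdash \forall x\,\PR_T^\mathrm{V}(\gdl{\varphi(\dot{x})}) \to \neg \PR_T(\gdl{\neg\sigma})$, and the latter is not provable in $T$. The moral is that your instinct was right --- the trigger must be tied to a sentence whose unprovability is itself arranged by diagonalization --- but the construction has to be carried out, and the paper's choice of building $\Sigma_1(x)$ into the filter (rather than disjoining $\mathsf{True}_{\Sigma_1}$) makes the bookkeeping cleaner.
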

\begin{proof}
Let $T_0$ be any finite subtheory of $T$ containing $\mathsf{Q}$ with $\bigwedge T_0$ is not a $\Pi_1$ sentence. 
Let $\Prf_T'(v, x, y)$ be the $\Delta_1$ formula 
\[
	\Prf_T(x, y) \land (\exists z < y \Prf_T(\dot{\neg} v, z) \to \Sigma_1(x)). 
\]
By the Fixed Point Lemma, there exists a $\Sigma_1$ sentence $\sigma$ satisfying
\[
	\PA \vdash \sigma \leftrightarrow \exists z(\Prf_T(\gdl{\neg \sigma}, z) \land \forall y \leq z \neg \Prf_T'(\gdl{\sigma}, \gdl{\bigwedge T_0 \to \sigma}, y)). 
\]
Let $\Prf_T^\mathrm{V}(x, y) : \equiv \Prf_T'(\gdl{\sigma}, x, y)$ and let $\PR_T^\mathrm{V}(x) : \equiv \exists y \Prf_T^\mathrm{V}(x, y)$. 
Then
\begin{itemize}
	\item $\PA \vdash \Prf_T^\mathrm{V}(x, y) \leftrightarrow \Prf_T(x, y) \land (\exists z < y \Prf_T(\gdl{\neg \sigma}, z) \to \Sigma_1(x))$. 
	\item $\PA \vdash \sigma \leftrightarrow \exists z(\Prf_T(\gdl{\neg \sigma}, z) \land \forall y \leq z \neg \Prf_T^\mathrm{V}(\gdl{\bigwedge T_0 \to \sigma}, y))$. 
\end{itemize}

First, we prove $T \nvdash \neg \sigma$. 
If $T \vdash \neg \sigma$, then for some natural number $p$, $\PA \vdash \Prf_T(\gdl{\neg \sigma}, \overline{p})$. 
Since $T \nvdash \sigma$, obviously $T \nvdash \bigwedge T_0 \to \sigma$. 
Then $\PA \vdash \forall y \leq \overline{p} \neg \Prf_T(\gdl{\bigwedge T_0 \to \sigma}, y)$. 
Since $\Prf_T^\mathrm{V}(x, y)$ implies $\Prf_T(x, y)$, we have $\PA \vdash \forall y \leq \overline{p} \neg \Prf_T^\mathrm{V}(\gdl{\bigwedge T_0 \to \sigma}, y)$. 
Then $\PA \vdash \sigma$ by the definition of $\sigma$. 
This is a contradiction. 
Therefore $T \nvdash \neg \sigma$. 

It follows that for any natural number $n$, $\PA \vdash \neg \Prf_T(\gdl{\neg \sigma}, \overline{n})$. 
Then for any formula $\varphi$, $\PA \vdash \Prf_T(\gdl{\varphi}, \overline{n}) \leftrightarrow \Prf_T^\mathrm{V}(\gdl{\varphi}, \overline{n})$. 
Thus $\PR_T^\mathrm{V}(x)$ is a $\Sigma_1$ provability predicate of $T$. 

Since $\PA \vdash \Sigma_1(x) \to (\PR_T(x) \leftrightarrow \PR_T^\mathrm{V}(x))$ by the definition, $\SCG$ for $\PR_T^\mathrm{V}(x)$ easily follows from $\SCG$ for $\PR_T(x)$. 

We prove that $\PC$ fails to hold for $\PR_T^\mathrm{V}(x)$. 
If $\PR_T^\mathrm{V}(x)$ satisfied $\PC$, then $S \vdash \PRL(\gdl{\bigwedge T_0 \to \sigma}) \to \PR_T^\mathrm{V}(\gdl{\bigwedge T_0 \to \sigma})$. 
By formalized deduction theorem, $S \vdash \PR_{[T_0]}(\gdl{\sigma}) \to \PR_T^\mathrm{V}(\gdl{\bigwedge T_0 \to \sigma})$. 
By $\SC$ for $\PR_{[T_0]}(x)$,
\begin{align}\label{eq4-1}
	S \vdash \sigma \to \PR_T^\mathrm{V}(\gdl{\bigwedge T_0 \to \sigma}).
\end{align}

By the definition of $\Prf_T^\mathrm{V}(x, y)$, we obtain
\[
	\PA \vdash \Prf_T^\mathrm{V}(\gdl{\bigwedge T_0 \to \sigma}, y) \land \Prf_T(\gdl{\neg \sigma}, z) \land z < y \to \Sigma_1(\gdl{\bigwedge T_0 \to \sigma}).
\]
Since $\bigwedge T_0 \to \sigma$ is not $\Sigma_1$, 
\[
	\PA \vdash \Prf_T^\mathrm{V}(\gdl{\bigwedge T_0 \to \sigma}, y) \land \Prf_T(\gdl{\neg \sigma}, z) \to y \leq z. 
\]
It follows
\[
	\PA \vdash \PR_T^\mathrm{V}(\gdl{\bigwedge T_0 \to \sigma}) \to \forall z(\Prf_T(\gdl{\neg \sigma}, z) \to \exists y \leq z \Prf_T^\mathrm{V}(\gdl{\bigwedge T_0 \to \sigma}, y)). 
\]
This means $\PA \vdash \PR_T^\mathrm{V}(\gdl{\bigwedge T_0 \to \sigma}) \to \neg \sigma$. 
From this with (\ref{eq4-1}), $S \vdash \sigma \to \neg \sigma$, and hence $S \vdash \neg \sigma$. 
This is a contradiction. 
Therefore $\PR_T^\mathrm{V}(x)$ does not satisfy $\PC$. 

Finally, we prove that $\PR_T^\mathrm{V}(x)$ does not satisfy $\DU{1}$. 
Let $\varphi(x)$ be any formula such that $\PA \vdash \forall x \neg \Sigma_1(\gdl{\varphi(\dot{x})})$ and $T \vdash \forall x \varphi(x)$. 
Since $\PA \vdash \Prf_T(\gdl{\varphi(\dot{z})}, y) \to z < y$, we have $\PA \vdash \PR_T^\mathrm{V}(\gdl{\varphi(\dot{z})}) \land \Prf_T(\gdl{\neg \sigma}, z) \to \Sigma_1(\gdl{\varphi(\dot{z})})$ by the definition of $\Prf_T^\mathrm{V}(x, y)$. 
Hence $\PA \vdash \PR_T^\mathrm{V}(\gdl{\varphi(\dot{z})}) \to \neg \Prf_T(\gdl{\neg \sigma}, z)$. 
Then $\PA \vdash \forall x \PR_T^\mathrm{V}(\gdl{\varphi(\dot{x})}) \to \neg \PR_T(\gdl{\neg \sigma})$. 
Since $T \nvdash \neg \PR_T(\gdl{\neg \sigma})$, we conclude that $T \nvdash \forall x \PR_T^\mathrm{V}(\gdl{\varphi(\dot{x})})$. 
\end{proof}
By Propositions \ref{LP1} and \ref{UP2}. $\PR_T^\mathrm{V}(x)$ does not satisfy any of $\D{2}$, $\BD{2}$ and $\CB$.

We give an example of Mostowski-like $\Sigma_1$ provability predicate which satisfies $\PCG$ but does not satisfy $\SC$. 

\begin{prop}\label{WP6}
There exists a $\Sigma_1$ provability predicate $\PR_T^\mathrm{VI}(x)$ of $T$ with: 
\begin{enumerate}
	\item $\PR_T^\mathrm{VI}(x)$ satisfies $\DU{1}$, $\DG{3}$, $\DCG$ and $\PCG$. 
	\item $\PR_T^\mathrm{VI}(x)$ satisfies neither $\SC$ nor $\CB$.
\end{enumerate} 
\end{prop}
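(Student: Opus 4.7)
The plan is to construct $\PR_T^{\mathrm{VI}}$ as a hybrid that incorporates the logical provability predicate $\PRL$ as a disjunct (to force $\PCG$) and modifies a standard $\Sigma_1$ provability predicate of $T$ by a Rosser/Mostowski-style escape clause tied to a fixed-point sentence $\sigma$ (to break $\SC$ and $\CB$). This dualizes the construction of Proposition \ref{WP5}: where that construction used the clause ``$\Sigma_1(x)$'' to keep $\SCG$ while destroying $\PC$, here the clause ``$\PRL(x)$'' will keep $\PCG$ while destroying $\SC$ (and, with extra work, $\CB$).

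Concretely, I would fix a finite subtheory $T_0 \supseteq \mathsf{Q}$ of $T$ with $\bigwedge T_0$ non-$\Sigma_1$, apply the Fixed Point Lemma to obtain a $\Sigma_1$ sentence $\sigma$ referring to $\Prf_T^{\mathrm{VI}}$ in a Rosser-like way, and set
\[
\Prf_T^{\mathrm{VI}}(x,y) \equiv \Prf_T(x,y) \land \bigl( \exists z < y\, \Prf_T(\gdl{\neg\sigma}, z) \to \PRL(x) \bigr),\qquad \PR_T^{\mathrm{VI}}(x) \equiv \PRL(x) \lor \exists y\, \Prf_T^{\mathrm{VI}}(x,y).
\]
The first step is the Rosser-style witness-comparison argument (as in the proof of Proposition \ref{WP5}) showing $T \nvdash \neg\sigma$; this implies that in the standard model the antecedent of the Rosser clause is unsatisfiable, and hence $\PR_T^{\mathrm{VI}}$ is a $\Sigma_1$ provability predicate of $T$.

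The positive conditions would then be verified in turn. $\PCG$ is immediate from the disjunctive form. For $\DCG$, given a true $\Delta_0$ sentence coded by $x$, one uses that $\PRL(\gdl{\bigwedge T_0 \to x})$ is $\PA$-provable and applies Lemma \ref{FDT} together with the Rosser clause to obtain $\PR_T^{\mathrm{VI}}(x)$. For $\DU{1}$, the condition $\DU{1}$ for $\PR_T$ yields $\PR_T(\gdl{\varphi(\vec{\dot{x}})})$ uniformly, and the Rosser-clause antecedent is neutralized either by the absence of short proofs of $\neg\sigma$ or by the $\PRL$ disjunct via formalized deduction. For $\DG{3}$, use $\SCG$ of $\PR_T$ applied to the $\Sigma_1$ formula $\PR_T^{\mathrm{VI}}(\dot{x})$ itself, noting that $\gdl{\PR_T^{\mathrm{VI}}(\dot{x})}$ is syntactically distinct from $\gdl{\sigma}$ so the escape clause does not interfere.

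The failure of $\SC$ on the sentence $\sigma$ follows by witness comparison: if $S \vdash \sigma \to \PR_T^{\mathrm{VI}}(\gdl{\sigma})$ held, then combining with the self-referential definition of $\sigma$ (which makes $\PR_T^{\mathrm{VI}}(\gdl{\sigma})$ essentially reduce to $\PRL(\gdl{\sigma})$, a sentence that does not imply $\sigma$ by the Rosser mechanism) would force $S \vdash \neg\sigma$, contradicting the first step. For the failure of $\CB$, the idea is to exhibit a parametric formula $\varphi(x)$, built from $\sigma$ and $\bigwedge T_0$, such that $\PR_T^{\mathrm{VI}}$ captures $\forall x\, \varphi(x)$ (via the Rosser disjunct applied to a $T$-proof of the universal closure) while at least one instance $\varphi(\dot{x_0})$ forces reliance on $\PRL$, on a formula that is not logically provable; the canonical shape is $\varphi(x) \equiv (\bigwedge T_0 \to \sigma) \lor \psi(x)$ for a suitable $T$-provable $\psi(x)$ whose Gödel-coded substitutions interact with the Rosser condition. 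The main obstacle, and the step that requires the most care, is precisely the design of this $\varphi(x)$: one must arrange the fixed point and the escape clause so that the universal closure is uniformly captured without the corresponding instance being logically provable, and in such a way that none of the previously verified uniform conditions ($\DU{1}$, $\DG{3}$, $\DCG$, $\PCG$) is inadvertently broken.
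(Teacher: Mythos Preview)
Your Rosser-style construction has a genuine gap: it does not satisfy $\DU{1}$, $\DG{3}$, or $\DCG$, for exactly the same reason that the predicate in Proposition~\ref{WP5} fails $\DU{1}$. Take any formula $\varphi(x)$ with $T \vdash \forall x\,\varphi(x)$ but $\PA \vdash \forall x\,\neg\PRL(\gdl{\varphi(\dot{x})})$ (for instance $\varphi(x) \equiv 0+x=x$). Then your $\PRL$ disjunct is useless, and you are reduced to showing
\[
S \vdash \forall x\,\exists y\bigl(\Prf_T(\gdl{\varphi(\dot{x})}, y) \land \forall z < y\,\neg\Prf_T(\gdl{\neg\sigma}, z)\bigr).
\]
But $S$ does not prove $\forall z\,\neg\Prf_T(\gdl{\neg\sigma}, z)$, and in any model of $S$ where some nonstandard $z_0$ codes a $T$-proof of $\neg\sigma$, the required $y$ must satisfy $y \leq z_0$, which is impossible for $x > z_0$ since $\gdl{\varphi(\dot{x})} \geq x$ and $y \geq \gdl{\varphi(\dot{x})}$. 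The identical obstruction kills $\DCG$ (true $\Delta_0$ sentences with large numerals are not logically valid) and $\DG{3}$ (the formulas $\PR_T^{\mathrm{VI}}(\overline{n})$ are not logically valid, and their G\"odel numbers grow with $n$). The phrase ``the Rosser-clause antecedent is neutralized by the absence of short proofs of $\neg\sigma$'' is an external fact, not something $S$ can use.

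The paper's construction is far simpler and avoids fixed points altogether: take an undecidable $\Pi_1$ sentence $\xi$, set $\xi' \equiv \xi \lor 0 = \mathsf{s}(0)$, and define $\PR_T^{\mathrm{VI}}(x) \equiv \PR_T(x) \land x \neq \gdl{\neg\xi'}$. Since $\neg\xi'$ is $T$-unprovable, all of $\DU{1}$, $\DG{3}$, $\DCG$ are inherited directly from $\PR_T$ (the excised G\"odel number is syntactically distinct from every $\gdl{\varphi(\vec{\dot{x}})}$ with $T \vdash \forall\vec{x}\,\varphi(\vec{x})$, from every $\gdl{\PR_T^{\mathrm{VI}}(\dot{x})}$, and from every true $\Delta_0$ sentence). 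The point of the disjunct $0 = \mathsf{s}(0)$ is that $\xi'$ holds in every one-element $\mathcal{L}_A$-structure, so $\neg\xi'$ is not logically valid and $\PCG$ survives; yet $\neg\xi'$ is $\Sigma_1$ and $\PA \vdash \neg\PR_T^{\mathrm{VI}}(\gdl{\neg\xi'})$ while $T \nvdash \xi'$, so $\SC$ fails. For $\CB$, the formula $\neg(\xi \lor x = \mathsf{s}(0))$ has $\neg\xi'$ as its instance at $x=0$, which breaks the Barcan implication.
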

\begin{proof}
Let $\xi$ be a $\Pi_1$ sentence undecidable in $T$ such as Rosser's sentence (see \cite{Lin03}), and let $\xi'$ be the sentence $\xi \lor 0 = \mathsf{s}(0)$ which is also undecidable in $T$. 
Let $\PR_T^\mathrm{VI}(x) : \equiv \PR_T(x) \land x \neq \gdl{\neg \xi'}$. 
Obviously, 
\begin{align}\label{eqS}
	\PA \vdash \forall x(x \neq \gdl{\neg \xi'} \to (\PR_T(x) \leftrightarrow \PR_T^\mathrm{VI}(x))).
\end{align}

Since $\neg \xi'$ is not provable in $T$, $\PR_T^\mathrm{VI}(x)$ is a $\Sigma_1$ provability predicate of $T$, and also $\DU{1}$ holds for $\PR_T^\mathrm{VI}(x)$. 
The conditions $\DG{3}$ and $\DCG$ follow from $\PA \vdash \forall x(\gdl{\PR_T^\mathrm{VI}(\dot{x})} \neq \gdl{\neg \xi'})$ and $\PA \vdash \forall x (\mathsf{True}_{\Delta_0}(x) \to x \neq \gdl{\neg \xi'})$, respectively. 

We prove $\PCG$. 
Let $M$ be an $\mathcal{L}_A$-structure whose domain is a singleton $\{e\}$. 
Then for every closed $\mathcal{L}_A$-term $t$, $t^M = e$. 
Thus $M \models \xi \lor 0 = \mathsf{s}(0)$. 
Therefore $\neg \xi'$ is not provable in predicate calculus. 
The above argument can be formalized in $\PA$, and so $\PA \vdash \forall x(\Fml(x) \to (\PRL(x) \to x \neq \gdl{\neg \xi'}))$. 
Then by $\PCG$ for $\PR_T(x)$, we conclude $\PA \vdash \forall x(\Fml(x) \to (\PRL(x) \to \PR_T^\mathrm{VI}(x)))$. 

Since $\PA \vdash \neg \PR_T^\mathrm{VI}(\gdl{\neg \xi'})$ and $T \nvdash \xi'$, we can prove $S \nvdash \PR_T^\mathrm{VI}(\gdl{\forall x \neg (\xi \lor x = \mathsf{s}(0))}) \to \forall x \PR_T^\mathrm{VI}(\gdl{\neg (\xi \lor \dot{x} = \mathsf{s}(0))})$ by (\ref{eqS}). 
The conditions $\SC$ and $\CB$ fail to hold because of them. 
\end{proof}
By Proposition \ref{LP1}, $\PR_T^\mathrm{VI}(x)$ satisfies neither $\D{2}$ nor $\BD{2}$.

At last, we prove that our Theorem \ref{MT} is actually an improvement of Buchholz's theorem (Theorem \ref{UBuc}). 

\begin{thm}\label{MT2}
There exists a $\Sigma_1$ provability predicate $\PR^\ast(x)$ of $\PA$ which satisfies $\DU{1}$, $\BDU{2}$, $\SCG$ and $\PCG$ but does not satisfy $\D{2}$. 
\end{thm}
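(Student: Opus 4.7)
The plan is to define $\PR^{\ast}(x)$ as $\PR_{\PA}(x)$ augmented, in non-standard models only, by the theorems of $\PA+\varphi_0$ and of $\PA+\chi_0$ for a carefully chosen pair $\varphi_0,\psi_0$ with $\chi_0 \equiv \varphi_0 \to \psi_0$. Take $\varphi_0 \equiv \mathsf{Con}_{\PA}$ and $\psi_0 \equiv \mathsf{Con}_{\PA + \mathsf{Con}_{\PA}}$; G\"odel's second incompleteness theorem applied to $\PA + \mathsf{Con}_{\PA}$, together with the fact that $\PA + \neg\mathsf{Con}_{\PA} \vdash \neg\mathsf{Con}_{\PA + \mathsf{Con}_{\PA}}$, yields $\PA \nvdash \chi_0$, $\PA \nvdash \psi_0$, and $\PA \nvdash \chi_0 \to \psi_0$ (the last being $\PA$-equivalent to $\varphi_0 \lor \psi_0$). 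Writing $\Sigma$ for the finite set of three true $\Pi_1$ statements $\neg\PR_{\PA}(\gdl{\psi_0})$, $\neg\PR_{\PA}(\gdl{\chi_0})$, $\neg\PR_{\PA}(\gdl{\chi_0 \to \psi_0})$, set $C \equiv \neg\mathsf{Con}_{\PA+\Sigma}$. Then $C$ is a $\Sigma_1$ sentence and, by G\"odel's second applied to the consistent theory $\PA+\Sigma$, both $\PA+\Sigma+C$ and $\PA+C$ are consistent while $\PA \nvdash C$.

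Define
\[
  \PR^{\ast}(x) :\equiv \PR_{\PA}(x) \lor \bigl(C \land [\PR_{\PA}(\gdl{\chi_0}\,\dot{\to}\,x) \lor \PR_{\PA}(\gdl{\varphi_0}\,\dot{\to}\,x)]\bigr).
\]
This formula is $\Sigma_1$; since $\PA \nvdash C$, $\Sigma_1$-completeness makes $C$ false in the standard model, so $\PR^{\ast}$ agrees with $\PR_{\PA}$ on standard numerals and hence weakly represents $\PA$. The conditions $\DU{1}$, $\SCG$ and $\PCG$ transfer at once from the corresponding properties of $\PR_{\PA}$ via the provable inclusion $\PR_{\PA}(x) \to \PR^{\ast}(x)$.

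For $\BDU{2}$, assume $\PA \vdash \forall \vec{x}(\alpha(\vec{x}) \to \beta(\vec{x}))$ and argue in $\PA$ by case analysis on the disjunct witnessing $\PR^{\ast}(\gdl{\alpha(\vec{\dot{x}})})$. The $\PR_{\PA}$-disjunct is handled by $\DU{2}$ for $\PR_{\PA}$ directly. For the $\chi_0$-disjunct, the hypothesis also gives $\PA \vdash \forall \vec{x}((\chi_0 \to \alpha(\vec{x})) \to (\chi_0 \to \beta(\vec{x})))$, so $\DU{2}$ for $\PR_{\PA}$ converts $\PR_{\PA}(\gdl{\chi_0 \to \alpha(\vec{\dot{x}})})$ into $\PR_{\PA}(\gdl{\chi_0 \to \beta(\vec{\dot{x}})})$, which together with $C$ yields $\PR^{\ast}(\gdl{\beta(\vec{\dot{x}})})$; the $\varphi_0$-disjunct is analogous.

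Finally, $\D{2}$ fails at the pair $\varphi_0,\psi_0$: because $\chi_0 \to \chi_0$ and $\varphi_0 \to \varphi_0$ are logical theorems, $\PA$ proves $C \to \PR^{\ast}(\gdl{\chi_0}) \land \PR^{\ast}(\gdl{\varphi_0})$; on the other hand, unfolding $\PR^{\ast}(\gdl{\psi_0})$ and using $\gdl{\varphi_0 \to \psi_0} = \gdl{\chi_0}$ shows that any model $M$ of $\PA + \Sigma + C$ satisfies $M \models \PR^{\ast}(\gdl{\chi_0}) \land \PR^{\ast}(\gdl{\varphi_0}) \land \neg\PR^{\ast}(\gdl{\psi_0})$, refuting the $\D{2}$-instance in $\PA$. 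The main obstacle is precisely the consistency of $\PA + \Sigma + C$, which is what the construction of $C$ in the opening paragraph is designed to secure.
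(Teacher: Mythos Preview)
Your proof is correct and takes a genuinely different, more elementary route than the paper's.

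Both arguments exploit the same structural observation: a disjunction of predicates each satisfying $\BDU{2}$ again satisfies $\BDU{2}$, whereas $\D{2}$ can fail for such a disjunction. The paper obtains the two disjuncts non-constructively, invoking Beklemishev's arithmetical completeness theorem for the bimodal logic $\CS_2$ to produce two $\Sigma_1$ numerations $\alpha_0,\alpha_1$ of $\PA$ and a sentence $\xi$ with
\[
\PA \nvdash \PR_{\alpha_0}(\gdl{\xi}) \land \PR_{\alpha_1}(\gdl{\neg \xi}) \to \neg\mathsf{Con}_{\PR_{\alpha_0}} \lor \neg\mathsf{Con}_{\PR_{\alpha_1}},
\]
and then sets $\PR^\ast(x) \equiv \PR_{\alpha_0}(x) \lor \PR_{\alpha_1}(x)$. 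Your construction is explicit: you disjoin $\PR_\PA(x)$ with two ``shifted'' predicates $\PR_\PA(\gdl{\chi_0}\,\dot\to\,x)$ and $\PR_\PA(\gdl{\varphi_0}\,\dot\to\,x)$, gated by a false $\Sigma_1$ sentence $C$ so that weak representability is preserved, while $C$ is chosen (via the second incompleteness theorem for $\PA+\Sigma$) to be consistent with the simultaneous failure of $\PR_\PA(\gdl{\psi_0})$, $\PR_\PA(\gdl{\chi_0})$ and $\PR_\PA(\gdl{\chi_0\to\psi_0})$. The verification of $\BDU{2}$ is the same in both proofs; your refutation of $\D{2}$ is by an explicit countermodel $M\models \PA+\Sigma+C$ rather than via the $\CS_2$ non-derivation. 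What you gain is self-containment: only the second incompleteness theorem and standard facts about $\PR_\PA$ are used, and no appeal to bimodal provability logic is needed. What the paper's approach gains is brevity and a clear modal picture (the three-point $\CS_2$-model) once Beklemishev's theorem is available off the shelf.
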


This theorem is proved by using Beklemishev's arithmetical completeness theorem of the bimodal logic $\CS_2$ with respect to independent $\Sigma_1$ numerations (see Beklemishev \cite{Bek92}). 
For this, we need some preparations. 
The language of $\CS_2$ is that of propositional logic equipped with two unary modal operators $[0]$ and $[1]$. 
Formulas in this language are called $\CS_2$-formulas. 
The axioms of the bimodal logic $\CS_2$ are propositional tautologies and the formulas $[i] (p \to q) \to ([i]p \to [i]q)$, $[i] p \to [j][i] p$ and $[i]([i] p \to p) \to [i]p$ for $i, j \in \{0, 1\}$. 
The inference rules of $\CS_2$ are modus ponens $\dfrac{A, \ \ A \to B}{B}$, necessitation $\dfrac{A}{[i] A}$ for $i \in \{0, 1\}$, and uniform substitution. 

We say a structure $M = (W, K_0, K_1, \prec, \Vdash, b)$ is a \textit{$\CS_2$-model} if it satisfies the following conditions: 
\begin{enumerate}
	\item $W$ is a nonempty finite set. 
	\item $K_0$ and $K_1$ are subsets of $W$ with $W = K_0 \cup K_1$. 
	\item $\prec$ is a strict partial ordering over $W$. 
	\item $b \in K_0 \cap K_1$ and $b \prec x$ for all $x \in W \setminus \{b\}$. 
	\item $\Vdash$ is a binary relation between $W$ and the set of all $\CS_2$-formulas such that $\Vdash$ satisfies the usual conditions for satisfaction and the following condition: for $i \in \{0, 1\}$, $x \Vdash [i] A$ if and only if for all $y \in K_i$, if $x \prec y$, then $y \Vdash A$. 
\end{enumerate}
A $\CS_2$-formula $A$ is said to be \textit{true} in a $\CS_2$-model $M = (W, K_0, K_1, \prec, \Vdash, b)$ if $b \Vdash A$. 
The modal logic $\CS_2$ is sound and complete with respect to $\CS_2$ models. 

\begin{thm}[See Smory\'nski \cite{Smo85}]
For any $\CS_2$-formula $A$, the following are equivalent: 
\begin{enumerate}
	\item $\CS_2 \vdash A$. 
	\item $A$ is true in all $\CS_2$-models. 
\end{enumerate}
\end{thm}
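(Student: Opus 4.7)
The plan is to handle the two directions separately: soundness by a routine induction on $\CS_2$-derivations and completeness by a finite canonical-model construction in the style of Segerberg's proof for $\mathsf{GL}$, adapted to the bimodal setting where two modalities share a single strict partial order $\prec$ but are relativized by the sets $K_0, K_1$ with $W = K_0 \cup K_1$.

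For soundness, I would fix an arbitrary $\CS_2$-model $M = (W, K_0, K_1, \prec, \Vdash, b)$ and verify each axiom pointwise. Propositional tautologies and the normality schema $[i](p\to q)\to([i]p\to[i]q)$ are immediate from the satisfaction clause for $\Vdash [i]$. The axiom $[i]p\to[j][i]p$ uses only transitivity of $\prec$: from $x\prec y\in K_j$ and $y\prec z\in K_i$ we get $x\prec z$ with $z\in K_i$, hence $x\Vdash[i]p$ forces $z\Vdash p$. L\"ob's axiom $[i]([i]p\to p)\to[i]p$ is handled by the standard contrapositive argument: if $x\nVdash[i]p$, pick a $\prec$-maximal $z\in K_i$ above $x$ with $z\nVdash p$ (which exists because $\prec$ is converse well-founded on the finite $K_i$); then $z\Vdash[i]p$ vacuously by maximality, and the hypothesis $x\Vdash[i]([i]p\to p)$ yields $z\Vdash p$, a contradiction. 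Modus ponens, necessitation, and uniform substitution are standard.

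For completeness I would argue contrapositively: given $\CS_2\nvdash A$, construct a finite $\CS_2$-model with $b\nVdash A$. Let $\Sigma$ be the smallest finite set of $\CS_2$-formulas containing $A$ and closed under subformulas and single negation. Let $W^{\ast}$ be the set of maximal $\CS_2$-consistent subsets of $\Sigma$, and pick $b\in W^{\ast}$ with $\neg A\in b$. Following Segerberg, define
\[
  x\prec y \iff \Bigl(\forall i\,\forall [i]B\in\Sigma\,\bigl([i]B\in x\Rightarrow \{B,[i]B\}\subseteq y\bigr)\Bigr) \;\land\; \Bigl(\exists i\,\exists [i]C\in y\setminus x\Bigr),
\]
which is transitive by the first clause and converse well-founded by the second, since the finitely many $[i]$-formulas of $\Sigma$ strictly accumulate along any $\prec$-chain. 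The truth lemma $y\Vdash B \iff B\in y$ for $B\in\Sigma$ is then proved by induction on $B$; the only nontrivial case is $[i]B$, and there the $(\Leftarrow)$ direction relies solely on the first clause of $\prec$, while the $(\Rightarrow)$ direction reduces to the Existence Lemma asserting that, whenever $[i]B\notin x$, the set $\{\neg B,[i]B\}\cup\{C,[i]C:[i]C\in x\}$ is $\CS_2$-consistent. This Existence Lemma is precisely where L\"ob's axiom for $[i]$ is invoked, combined with necessitation and distribution, by the standard derivation.

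The main obstacle, and what makes the construction genuinely bimodal rather than a duplicated $\mathsf{GL}$-model, is the coordinated choice of $K_0$ and $K_1$. I would construct the coloring in tandem with the accessibility: whenever the Existence Lemma produces a successor $z$ of $x$ witnessing a specific $[i]B\notin x$, assign $z\in K_i$ (and place $b\in K_0\cap K_1$; any $\prec$-isolated world is placed arbitrarily). This secures the $(\Rightarrow)$ direction of the truth lemma for $[i]B$ because the falsifying successor lies in the correct $K_i$, while the $(\Leftarrow)$ direction is independent of $K$-membership. The only delicate point is that a single world may serve as witness for several existence calls with different $i$; this is harmless, as one simply places it into the union of the required $K_i$'s, and the satisfaction clause for $\Vdash[i]$ is monotone in $K_i$. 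Soundness of the resulting model is guaranteed by the general soundness theorem, and $b\nVdash A$ follows from $\neg A\in b$ and the truth lemma.
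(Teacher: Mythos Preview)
The paper does not prove this theorem; it is quoted from Smory\'nski with a bare citation, so there is no in-paper argument to compare against. Your soundness sketch is fine. The completeness sketch, however, has a genuine gap in the interaction between the shared order $\prec$ and the two modalities.

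You define $x\prec y$ by requiring that for \emph{every} $i$ and every $[i]B\in\Sigma$, $[i]B\in x$ implies $\{B,[i]B\}\subseteq y$. Your Existence Lemma, as stated, only establishes consistency of $\{\neg B,[i]B\}\cup\{C,[i]C:[i]C\in x\}$ for the single index $i$; nothing forces the witness $z$ to contain $C$ when $[j]C\in x$ with $j\neq i$, so $x\prec z$ need not hold. Worse, the stronger lemma you would actually need is false in $\CS_2$: take a maximal consistent $x$ with $[1]p\in x$ and $[0]p\notin x$ (such $x$ exists since $\CS_2\nvdash[1]p\to[0]p$). Any $\prec$-successor of $x$ must contain $p$ (forced by $[1]p\in x$ under your definition), so no $\prec$-successor can contain $\neg p$, and the $(\Rightarrow)$ direction of the truth lemma fails at $x$ for $[0]p$. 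The underlying point is semantic: a world $y\in K_0\setminus K_1$ above $x$ is \emph{not} constrained by $[1]$-formulas true at $x$, whereas your $\prec$ imposes that constraint uniformly.

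The standard repair is a tree (selective filtration) construction rather than a flat canonical model: from a node labelled $x$ and each $[i]B\notin x$, create a child in $K_i$ labelled by a maximal consistent extension of $\{\neg B,[i]B\}\cup\{C,[i]C:[i]C\in x\}\cup\{[j]C:[j]C\in x,\ j\neq i\}$. This set \emph{is} $\CS_2$-consistent (use L\"ob for $[i]$ together with the mixing axiom $[j]C\to[i][j]C$; note one does not, and cannot, demand $C$ for the off-index boxes). Boxed formulas strictly accumulate along every branch, so the tree is finite; the $(\Leftarrow)$ direction of the truth lemma then uses that each $[i]B$ propagates to all descendants and unpacks to $B$ precisely at $K_i$-children. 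This construction also handles the root condition $b\prec x$ for all $x\neq b$, which your flat model does not address.
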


Let $\alpha_0(v)$ and $\alpha_1(v)$ be any $\Sigma_1$ numerations of $\PA$. 
A mapping $f$ from $\CS_2$-formulas to $\mathcal{L}_A$-sentences is a \textit{$(\alpha_0, \alpha_1)$-interpretation} if $f$ commutes with each propositional connective, and $f([i] A) \equiv \PR_{\alpha_i}(\gdl{f(A)})$ for $i \in \{0, 1\}$. 
Beklemishev proved that $\CS_2$ is sound and complete with respect to this kind of interpretations. 

\begin{thm}[The arithmetical completeness theorem of $\CS_2$ (Beklemishev \cite{Bek92})]
For any $\CS_2$-formula $A$, the following are equivalent:
\begin{enumerate}
	\item $\CS_2 \vdash A$. 
	\item For any $\Sigma_1$ numerations $\alpha_0(v)$ and $\alpha_1(v)$ of $\PA$ and any $(\alpha_0, \alpha_1)$-interpretation $f$, $\PA \vdash f(A)$. 
\end{enumerate}
\end{thm}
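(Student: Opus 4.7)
The plan is to take $\PR^\ast(x)$ to be the disjunction $\PR_{\alpha_0}(x) \vee \PR_{\alpha_1}(x)$ of two $\Sigma_1$ provability predicates built by Feferman's construction from a pair of $\Sigma_1$ numerations $\alpha_0(v), \alpha_1(v)$ of $\PA$, which I will choose using the arithmetical completeness theorem of $\CS_2$. The formula $\PR^\ast$ is visibly $\Sigma_1$, and a routine check via $\Sigma_1$-completeness shows that $\PR^\ast$ is a provability predicate of $\PA$: if $\PA \vdash \PR^\ast(\overline{n})$, then this $\Sigma_1$ sentence is true in $\mathbb{N}$, so one of $\PR_{\alpha_i}(\overline{n})$ is true hence $\PA$-provable, hence (because each $\PR_{\alpha_i}$ is a provability predicate of $\PA$) $n$ is the G\"odel number of a $\PA$-theorem; the converse is immediate.

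The four positive conditions are inherited from the individual disjuncts. Both $\PR_{\alpha_0}$ and $\PR_{\alpha_1}$ are natural Feferman-style $\Sigma_1$ provability predicates of $\PA$ and so satisfy full uniform and global derivability conditions; in particular each satisfies $\DU{1}$, $\BDU{2}$, $\SCG$, and $\PCG$. The conditions $\DU{1}$, $\SCG$, and $\PCG$ transfer immediately to $\PR^\ast$ because their conclusions are positive in $\Phi$ and hold already for one disjunct. For $\BDU{2}$: given $\PA \vdash \forall \vec{x}(\varphi(\vec{x}) \to \psi(\vec{x}))$, applying $\BDU{2}$ to each $\PR_{\alpha_i}$ yields $\PA \vdash \forall \vec{x}(\PR_{\alpha_i}(\gdl{\varphi(\vec{\dot{x}})}) \to \PR_{\alpha_i}(\gdl{\psi(\vec{\dot{x}})}))$ for $i = 0, 1$, and a trivial propositional step produces $\PA \vdash \forall \vec{x}(\PR^\ast(\gdl{\varphi(\vec{\dot{x}})}) \to \PR^\ast(\gdl{\psi(\vec{\dot{x}})}))$.

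The substance of the argument is engineering the failure of $\D{2}$. Consider the $\CS_2$-formula
\[
	A \equiv \bigl([0](p \to q) \vee [1](p \to q)\bigr) \to \bigl(([0] p \vee [1] p) \to ([0] q \vee [1] q)\bigr).
\]
I will exhibit a $\CS_2$-countermodel: take $W = \{b, w_0, w_1\}$ with $K_0 = \{b, w_0\}$ and $K_1 = \{b, w_1\}$; let the only strict inequalities be $b \prec w_0$ and $b \prec w_1$; set $w_0 \Vdash \neg p \wedge \neg q$ and $w_1 \Vdash p \wedge \neg q$. At $b$, the formula $[0](p \to q)$ holds (vacuously, at $w_0$) and $[1] p$ holds (at $w_1$), while $[0] q$ and $[1] q$ both fail since $q$ is false at each $w_i$; so $A$ is false at $b$ and therefore $\CS_2 \nvdash A$. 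By Beklemishev's theorem, there exist $\Sigma_1$ numerations $\alpha_0(v), \alpha_1(v)$ of $\PA$ and an $(\alpha_0, \alpha_1)$-interpretation $f$ with $\PA \nvdash f(A)$. Setting $\varphi := f(p)$ and $\psi := f(q)$, the sentence $f(A)$ unfolds to $\PR^\ast(\gdl{\varphi \to \psi}) \to (\PR^\ast(\gdl{\varphi}) \to \PR^\ast(\gdl{\psi}))$, so $\D{2}$ fails for $\PR^\ast$.

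The main obstacle is choosing a $\CS_2$-formula whose failure of $\CS_2$-provability genuinely reflects the non-closure of a \emph{disjunction} under modus ponens, rather than non-closure for one of the boxes individually. The countermodel above achieves this by placing $w_0 \in K_0 \setminus K_1$ and $w_1 \in K_1 \setminus K_0$, so that the conditional $p \to q$ is witnessed only by $[0]$ and the antecedent $p$ only by $[1]$; neither $\PR_{\alpha_i}$ alone accumulates both premises, so $\D{2}$ for the individual $\PR_{\alpha_i}$ cannot be exploited to prove the consequent, and the disjunctive collapse is therefore modus-ponens-incomplete as required.
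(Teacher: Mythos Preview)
Your proposal does not address the stated theorem at all. The statement you were asked to prove is Beklemishev's arithmetical completeness theorem for $\CS_2$, which the paper does not prove but merely \emph{cites} as an external result. What you have written is instead a proof of Theorem~\ref{MT2} (the existence of a $\Sigma_1$ provability predicate $\PR^\ast$ satisfying $\DU{1}$, $\BDU{2}$, $\SCG$, $\PCG$ but not $\D{2}$), a theorem that \emph{uses} the arithmetical completeness of $\CS_2$ as a black box. So as a proof of the displayed statement, your proposal is simply off-target: nothing in it establishes either soundness or completeness of $\CS_2$ with respect to arithmetical interpretations.

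If your intent was to prove Theorem~\ref{MT2}, then your argument is correct and follows essentially the same route as the paper. Both the paper and you define $\PR^\ast(x) :\equiv \PR_{\alpha_0}(x) \lor \PR_{\alpha_1}(x)$, verify the positive conditions by inheritance from the disjuncts, and obtain the failure of $\D{2}$ by exhibiting a $\CS_2$-countermodel on three worlds $\{b, w_0, w_1\}$ with $w_i \in K_i \setminus K_{1-i}$ and then invoking Beklemishev's completeness theorem. The only difference is cosmetic: the paper refutes the $\CS_2$-formula $[0]p \land [1]\neg p \to [0]\bot \lor [1]\bot$ and then argues that the resulting arithmetical instance blocks the specific $\D{2}$-case with $\psi \equiv 0 \neq 0$, whereas you refute the formula $([0](p\to q) \lor [1](p\to q)) \to (([0]p \lor [1]p) \to ([0]q \lor [1]q))$, whose arithmetical translation is already literally a $\D{2}$-instance for $\PR^\ast$. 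Your choice is slightly more direct; the paper's choice makes the connection to consistency statements a bit more visible. Neither gains anything substantive over the other.
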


We are ready to prove Theorem \ref{MT2}. 

\begin{proof}[Proof of Theorem \ref{MT2}]
Let us consider a $\CS_2$-model $M = (W, K_0, K_1, \prec, \Vdash, b)$ satisfying the following conditions: 
\begin{enumerate}
	\item $W = \{b, x_0, x_1\}$, 
	\item $K_0 = \{b, x_0\}$ and $K_1 = \{b, x_1\}$, 
	\item $\prec = \{(b, x_0), (b, x_1)\}$, 
	\item $x_0 \Vdash p$ and $x_1 \nVdash p$. 
\end{enumerate}

Then $b \Vdash [0]p \land [1]\neg p \land \neg [0] \bot \land \neg [1] \bot$. 
Thus $\CS_2 \nvdash [0]p \land [1] \neg p \to [0] \bot \lor [1] \bot$. 
By the arithmetical completeness theorem of $\CS_2$, there are $\Sigma_1$ numerations $\alpha_0(v)$ and $\alpha_1(v)$ of $\PA$, and a $(\alpha_0, \alpha_1)$-interpretation $f$ such that $\PA \nvdash f([0]p \land [1] \neg p \to [0] \bot \lor [1] \bot)$. 
Let $\xi : \equiv f(p)$, then 
\begin{align}\label{eq11}
	\PA \nvdash \PR_{\alpha_0}(\gdl{\xi}) \land \PR_{\alpha_1}(\gdl{\neg \xi}) \to \neg \mathsf{Con}_{\PR_{\alpha_0}} \lor \neg \mathsf{Con}_{\PR_{\alpha_1}}. 
\end{align}

Let $\PR^\ast(x)$ be the $\Sigma_1$ formula $\PR_{\alpha_0}(x) \lor \PR_{\alpha_1}(x)$. 
Then $\PR^\ast(x)$ is obviously a $\Sigma_1$ provability predicate of $\PA$. 
Moreover $\DU{1}$, $\SCG$ and $\PCG$ are inherited from $\PR_{\alpha_0}(x)$. 

First, we prove that $\PR^\ast(x)$ satisfies $\BDU{2}$. 
Suppose $\PA \vdash \forall \vec{x} (\varphi(\vec{x}) \to \psi(\vec{x}))$. 
Then since both $\PR_{\alpha_0}(x)$ and $\PR_{\alpha_1}(x)$ satisfy $\BDU{2}$, we have
\[
	\PA \vdash \PR_{\alpha_0}(\gdl{\varphi(\vec{\dot{x}})}) \to \PR_{\alpha_0}(\gdl{\psi(\vec{\dot{x}})})\ \text{and}\ \PA \vdash \PR_{\alpha_1}(\gdl{\varphi(\vec{\dot{x}})}) \to \PR_{\alpha_1}(\gdl{\psi(\vec{\dot{x}})}). 
\]
By the definition of $\PR^\ast(x)$, 
\[
	\PA \vdash \PR_{\alpha_0}(\gdl{\varphi(\vec{\dot{x}})}) \to \PR^\ast(\gdl{\psi(\vec{\dot{x}})})\ \text{and}\ \PA \vdash \PR_{\alpha_1}(\gdl{\varphi(\vec{\dot{x}})}) \to \PR^\ast(\gdl{\psi(\vec{\dot{x}})}). 
\]
Therefore we conclude
\[
	\PA \vdash \forall \vec{x} (\PR^\ast(\gdl{\varphi(\vec{\dot{x}})}) \to \PR^\ast(\gdl{\psi(\vec{\dot{x}})})). 
\]

At last, we prove that $\PR^\ast(x)$ does not satisfy $\D{2}$. 
Suppose, towards a contradiction,  
\[
	\PA \vdash \PR^\ast(\gdl{\xi \to 0 \neq 0}) \to (\PR^\ast(\gdl{\xi}) \to \PR^\ast(\gdl{0 \neq 0})). 
\]
Then by the definition of $\PR^\ast(x)$, 
\[
	\PA \vdash \PR_{\alpha_0}(\gdl{\neg \xi}) \lor \PR_{\alpha_1}(\gdl{\neg \xi}) \to (\PR_{\alpha_0}(\gdl{\xi}) \lor \PR_{\alpha_1}(\gdl{\xi}) \to \neg \mathsf{Con}_{\PR_{\alpha_0}} \lor \neg \mathsf{Con}_{\PR_{\alpha_1}}). 
\]
By logic, we obtain 
\[
	\PA \vdash \PR_{\alpha_0}(\gdl{\xi}) \land \PR_{\alpha_1}(\gdl{\neg \xi}) \to \neg \mathsf{Con}_{\PR_{\alpha_0}} \lor \neg \mathsf{Con}_{\PR_{\alpha_1}}. 
\]
This contradicts (\ref{eq11}). 
Therefore we conclude 
\[
	\PA \nvdash \PR^\ast(\gdl{\xi \to 0 \neq 0}) \to (\PR^\ast(\gdl{\xi}) \to \PR^\ast(\gdl{0 \neq 0})). 
\]
\end{proof}
By Proposition \ref{UP2}.2, $\PR^\ast(x)$ satisfies $\CB$. 

As we have seen, examples of formulas given in this section show several non-implications between conditions. 
For instance, the following non-implications related to Proposition \ref {LP1} are also obtained. 
\begin{enumerate}
	\item $\DC \not \Rightarrow \D{1}$ (Proposition \ref{exQ}). 
	\item $\{\BD{m} : m \geq 2\} \not \Rightarrow \D{1}$ (Proposition \ref{exN}). \\
	For all $m \geq 2$, $\D{1} \not \Rightarrow \BD{m}$ (Proposition \ref{exMos}). 
	\item For all $m \geq 1$, $\D{2} \not \Rightarrow \BD{m}$ (Proposition \ref{exQ}). 
	\item $\D{3} \not \Rightarrow \DC$ (Proposition \ref{exN}). 
\end{enumerate}

However, we do not have enough such non-implications between conditions including uniform and global versions. 
We close this paper with the following problem. 

\begin{prob}
Study further non-implications between derivability conditions. 
\end{prob}

\section{Acknowledgments}

This work was partly supported by JSPS KAKENHI Grant Numbers 16K17653 and 19K14586. 
The author would like to thank Toshiyasu Arai, Yong Cheng, Makoto Kikuchi, Hidenori Kurokawa, Yuya Okawa and Albert Visser for their valuable comments. 
The author would also like to thank the anonymous referee for pointing out an error in an earlier version of the manuscript.

\bibliographystyle{plain}
\bibliography{ref}

\end{document}